\newtheorem{lemma}{Lemma}[section]
\newtheorem{theorem}[lemma]{Theorem}
\newtheorem{proposition}[lemma]{Proposition}
\newtheorem{prop}[lemma]{Proposition}
\newtheorem{cor}[lemma]{Corollary}
\newtheorem{claim}[lemma]{Claim}
\newtheorem{claim*}{Claim}
\newtheorem{example}[lemma]{Example}
\newtheorem*{theorem*}{Theorem}
\theoremstyle{definition}
\newtheorem{remark}[lemma]{Remark}
\newtheorem{rmk}[lemma]{Remark}
\newcommand{\A}{{\mathbb A}}
\newcommand{\G}{{\mathbb G}}
\newcommand{\Q}{{\mathbb Q}}
\newcommand{\R}{{\mathbb R}}
\newcommand{\Z}{{\mathbb Z}}
\newcommand{\Xbar}{{\overline{X}}}
\newcommand{\kbar}{{\overline{k}}}
\newcommand{\ksep}{{k^{\operatorname{sep}}}}
\newcommand{\Xsep}{{X^{\operatorname{sep}}}}
\newcommand{\kk}{{\mathbf k}}
\newcommand{\calA}{{\mathcal A}}
\newcommand{\calI}{{\mathcal I}}
\newcommand{\calO}{{\mathcal O}}
\newcommand{\calX}{{\mathcal X}}
\newcommand{\calZ}{{\mathcal Z}}
\DeclareMathOperator{\lcm}{lcm}
\DeclareMathOperator{\inv}{inv}
\DeclareMathOperator{\im}{im}
\DeclareMathOperator{\Hom}{Hom}
\DeclareMathOperator{\Gal}{Gal}
\DeclareMathOperator{\Ind}{Ind}
\DeclareMathOperator{\Res}{Res}
\DeclareMathOperator{\Br}{Br}
\DeclareMathOperator{\divv}{div}
\DeclareMathOperator{\ord}{ord}
\DeclareMathOperator{\Div}{Div}
\DeclareMathOperator{\Pic}{Pic}
\DeclareMathOperator{\Spec}{Spec}
\DeclareMathOperator{\ev}{ev}
\DeclareMathOperator{\et}{\textrm{\normalfont \'et}}
\DeclareMathOperator{\res}{res}
\DeclareMathOperator{\Princ}{Princ}
\DeclareMathOperator{\id}{id}
\newcommand{\isom}{\cong}
\numberwithin{equation}{section}
\numberwithin{table}{section}
\newcommand{\Btilde}{{\widetilde{B}}}
\newcommand{\Gtilde}{{\widetilde{G}}}
\newcommand{\Ubar}{\overline{U}}
\newcommand{\DivXminusU}{\Div_{\Xbar \setminus \Ubar}}
\newcommand{\gammatilde}{\widetilde{\gamma}}
\newcommand{\deltatilde}{\widetilde{\delta}}
\DeclareMathOperator{\dd}{d}
\DeclareMathOperator{\D}{D}
\DeclareMathOperator{\Tot}{Tot}
\newcommand{\betatilde}{\widetilde{\beta}}
\newcommand{\kcycl}{k_\textrm{cycl}}
\newcommand{\varphitilde}{\widetilde{\varphi}}
\newcommand{\magma}{\textsc{magma}}
\title{Obstructions to integral points on affine Ch\^atelet surfaces}
\author{Jennifer Berg}
\address{ Department of Mathematics MS 136, Rice University, Houston, TX 77005, USA}
\email{jb93@rice.edu}
\urladdr{http://math.rice.edu/\~{}jb93}
\begin{document}
%%%%%%%%%%%%%%%%%%%%%%%%%%%%%%%%%%%%%%%%%%%%%%%%%%%%%%%%%%%%%%%%%%%%%%%%%%%%
\begin{abstract}
	We consider the Brauer-Manin obstruction to the existence of integral points on affine surfaces defined by $x^2 - ay^2 = P(t)$ over a number field. We enumerate the possibilities for the Brauer groups of certain families of such surfaces, and show that in contrast to their smooth compactifications, the Brauer groups of these affine varieties need not be generated by cyclic (e.g. quaternion) algebras. Concrete examples are given of such surfaces over $\Q$ which have solutions in $\Z_{p}$ for all $p$ and solutions in $\Q$, but for which the failure of the integral Hasse principle cannot be explained by a Brauer-Manin obstruction.

	The methods of this paper build off of the ideas of \cite{DSW15}, \cite{KT08}, \cite{Preu}: we study $\Br X/\Br_{0} X$ of affine Ch\^atelet surfaces with a focus on explicitly representing, by
	central simple algebras over $\kk(X)$, a finite set of classes of $\Br(X)$ which generate the quotient. Notably, we develop techniques for constructing explicit representatives of non-cyclic Brauer classes on affine surfaces, as well as provide an effective algorithm for the computation of local invariants of these Brauer classes via effective lifting of cocycles in Galois cohomology.
\end{abstract}
%%%%%%%%%%%%%%%%%%%%%%%%%%%%%%%%%%%%%%%%%%%%%%%%%%%%%%%%%%%%%%%%%%%%%%%%%%%%
\maketitle

%%%%%%%%%%%%%%%%%%%%%%%%%%%%%%%%%%%%%%%%%%%%%%%%%%%%%%%%%%%%%%%%%%%%%%%%%%%%%%%%
%%%%%%%%%%%%%%%%%%		      Introduction      	    %%%%%%%%%%%%%%%%%%%%%%%%
%%%%%%%%%%%%%%%%%%%%%%%%%%%%%%%%%%%%%%%%%%%%%%%%%%%%%%%%%%%%%%%%%%%%%%%%%%%%%%%%
\section{Introduction}
Given a smooth, geometrically integral variety $X$ over $\Q$, one may ask whether $X$ has a rational point or integral point, i.e. whether $X(\Q) \ne \emptyset$ or $\calX(\Z) \ne \emptyset$ (for $\calX$ a separated scheme of finite type over $\Z$ with generic fiber $X$).  Since $\Q$ embeds into each of its completions, a necessary condition for $X(\Q)$ (resp. $\calX(\Z)$) to be nonempty is that $X$ is locally soluble, that is $X(\Q_p) \ne \emptyset$ for all primes $p$ of $\Q$ (resp. $\calX(\Z_p) \ne \emptyset$). This condition is often not sufficient; there exist varieties $X$ which are everywhere locally soluble, but $X(\Q) = \emptyset$ (resp. $\calX(\Z) = \emptyset$). These varieties are said to \textbf{fail the (integral) Hasse principle}.

In 1970, Manin \cite{Man71} explained many failures of the Hasse principle for rational points via use of the Brauer group and class field theory. In particular, he defined a subset $X(\A_\Q)^{\Br}$ of $X(\A_\Q)$, known today as the \textbf{Brauer-Manin set}, with the property that 
\[ X(\Q) \subseteq  X(\A_\Q)^{\Br} \subseteq X(\A_\Q ).\]
Thus, an empty Brauer-Manin set can be viewed as an obstruction to the existence of rational points.

In general, the study of integral points is more difficult than that of rational points on the same variety, although when $X$ is proper, the two notions coincide. Thus, when studying obstructions to integral points, it is sensible to restrict attention to non-proper varieties. In 2009, Colliot-Th\'el\`ene and Xu \cite{CTXO9} made the significant observation that the Brauer group and Brauer-Manin set are relevant to the study of integral points as well.

When interested in solutions over the integers, one must consider another issue aside from local solubility and the Brauer-Manin obstruction. Consider, for example, the surface $\calX_{22} \subset \A_\Z^3$  with defining equation $x^2 + y^2 + t^4 = 22$, which is insoluble in $\Z^3$, although there exist both rational solutions as well as solutions in $\Z_p$ for each prime $p$. The simplest argument to show insolubilty over $\Z$ is to notice that the equation defines a compact submanifold in $\R^3$, thereby restricting the number of integral points to be finite. This observation allows for the relatively simple construction of examples of separates schemes of finite type over $\Z$ which have neither an integral point nor a Brauer-Manin obstruction, in stark contrast with the situation for rational points. For example, in \cite{CTW12} it is proved that $\calX \subset \A_\Z^3$ defined by the equation $2x^2 + 3y^2 + 4z^2 =1$, which is clearly insoluble over $\Z$, has no Brauer-Manin obstruction to integral points. Again, this example suffers from the fact that $X(\R)$ is compact. Further examples in which archimedean phenomena act in opposition to the Brauer-Manin obstruction can be found in \cites{DW16}, \cites{JS16}, \cites{Har17}, \cites{OW16}. In \S \ref{sec: concrete examples} we prove that the insolubility of $\calX_{22}$ over $\Z$, as well as other examples of a similar nature, cannot be explained by a Brauer-Manin obstruction. Moreover, we prove that this remains true when we consider $\Z[\frac{1}{2}]$ points, demonstrating that it is not enough to require unboundedness at one place.

In order to study the Brauer-Manin obstruction to the existence of integral points in \S\ref{sec: concrete examples}, we must start by considering a larger class of varieties. To that end, let $L/k$ be an extension of number fields of degree $d$, and choose a basis $\omega_1, \dots, \omega_d$ for $L/k$. Consider the affine $k$-variety $X \subset \A^{d+1}_k$ defined by $ N_{L/k}(\vec{z}) = P(t) $
where $N_{L/k}(\vec{z}) = N_{L/k}(z_1 \omega_1 + \dots + z_d \omega_d)$ is the norm form for the extension $L/k$, and $P(t)$ is a separable polynomial over $k$. These varieties will be referred to as norm form varieties. In particular, we will be interested in the case when $\dim X = 2$; such $k$-varieties are called \textbf{(generalized) affine Ch\^atelet surfaces}, and have the simpler form
\begin{equation} \label{eqn: Xoverk}
	x^2 - ay^2 = c P(t)
\end{equation}
with $c \in k^\times$, $a \in k^\times$ not a square, and $P(t) \in k[t]$ a monic separable polynomial of degree $n$. Since $P(t)$ is separable, $X$ is a smooth geometrically integral surface over $k$.

The study of the Hasse principle for rational points and weak approximation is well understood on the smooth proper compactifications of Ch\^atelet surfaces. In the monumental two part paper \cites{CTSSD87a,CTSSD87b} Colliot-Th\'el\`ene, Sansuc, and Swinnerton-Dyer prove that the only obstruction to the Hasse principle and weak approximation on classical Ch\^atelet surfaces is the Brauer-Manin obstruction.

Thus, it is natural to wonder whether an analogous result holds for affine Ch\^atelet surfaces; in \cite{CTH13}, Colliot-Th\'el\`ene and Harari ask for the integral Hasse principle and strong approximation for such surfaces with $\deg P(t) \ge 3$. One can generalize further to norm form varieties; when $[L:k] \ge 4$ and $\deg P(t) \ge 2$, these varieties share the same geometric properties with affine Ch\^atelet surfaces that were incompatible with the known techniques for answering this question at the time. Using descent methods, Derenthal and Wei \cite{DW16} give the first strong approximation (with Brauer-Manin obstruction) results for $[K:k] = 4$ and $\deg P(t) = 2$. In the case of affine Ch\^atelet surfaces, Gundlach \cite{Gun13} gives strong approximation results under the assumption of Schinzel's hypothesis for surfaces defined by $x^2 + y^2 + z^k = m$ with $k \ge 3$, odd.

In general, the first step to understanding Brauer-Manin obstructions is to determine the isomorphism type of the Brauer group $\Br X/\Br_0 X$. In all of the aforementioned cases, we note that the Brauer groups of these varieties modulo constant algebras are generated by two torsion, e.g. quaternion algebras. We prove that this need not be the case for affine Ch\^atelet surfaces.

\begin{theorem} \label{thm: short version degree 4} Let $P(t)$ be a quartic polynomial over a number field $k$ that is irreducible and has splitting field $K$. Let $X \subset \A_k^3$ be an affine Ch\^atelet surface as in (\ref{eqn: Xoverk}) and let $L = k(\sqrt{a})$. Then the Brauer group of $X$ modulo constant algebras is either trivial or generated by quaternion algebras, except when $L$ is contained in $K$, $\Gal(K/k) \isom D_4$ and $\Gal(K/L) \isom \Z/4\Z$ in which case it is generated by a non-cyclic algebra of order 4.

\end{theorem}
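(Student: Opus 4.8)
The plan is to compute $\Br X/\Br_0 X$ via the Hochschild--Serre spectral sequence for the geometric action, reducing everything to group cohomology of $G = \Gal(K/k)$ (enlarged, if necessary, so that $L \subseteq K$; note $[K:k] \le 8$ since $P$ is quartic with at most dihedral Galois group after adjoining $\sqrt a$). First I would recall that for a smooth affine Châtelet surface $X$, $\Br \Xbar = 0$ (the geometric Brauer group of such a rational surface with its boundary removed vanishes), so that $\Br_1 X = \Br X$ and the low-degree exact sequence gives $\Br X/\Br_0 X \hookrightarrow \HH^1(k, \Pic \Xbar)$, which after passing to the finite Galois group through which the action factors becomes $\HH^1(G, \Pic \Xbar)$ (plus a contribution that dies because $X$ has points everywhere locally / a $k$-point, or more robustly because $\HH^3(G,\overline{k}[X]^\times)$-type terms vanish). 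The geometric Picard group $\Pic\Xbar$ for the affine surface is the Picard group of the smooth projective Châtelet surface modulo the classes of the boundary components; over $\kbar$ the relevant boundary divisors are the components of the fiber of the conic bundle over $t=\infty$ together with the sections at infinity, and one gets an explicit permutation-type $G$-lattice $M := \Pic\Xbar$ whose structure depends only on the factorization of $P$ over $K$ (here, $P$ irreducible with splitting field $K$) and on how $L = k(\sqrt a)$ sits relative to $K$.

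Next I would carry out the case analysis on $\Gal(K/k)$ for $P$ irreducible quartic: the transitive subgroups of $S_4$ are $\Z/4$, $(\Z/2)^2$ (Klein four, acting as $V_4 \le S_4$), $D_4$, $A_4$, $S_4$. For each, and for each possible position of the quadratic subextension $L$ (when $L \subseteq K$) or the case $L \not\subseteq K$, I would write down $M$ as a $G$-lattice and compute $\HH^1(G, M)$. The expected outcome is that $\HH^1(G,M)$ is either $0$ or $\Z/2$ or, in the single exceptional configuration, $\Z/4$. The key structural point is that a class in $\HH^1(G,M)$ corresponds to a Brauer class that is \emph{cyclic} (representable by a cyclic, hence when the order is $2$ a quaternion, algebra) precisely when its image can be realized through a cyclic quotient of $G$ acting suitably — concretely, when the corresponding element of $\HH^1(G,M)$ lies in the image of inflation from a procyclic quotient, or dually is detected by restriction to a cyclic subgroup. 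So I would (i) identify when $\HH^1(G,M) \ne 0$, and (ii) among those, test cyclicity of the generator by restricting to cyclic subgroups of $G$ and checking whether the class survives; equivalently, use the criterion that an order-$n$ class is cyclic iff it lies in the image of $\HH^2(\Z/n, \Z) \to \HH^2(k,\Z/n) \hookrightarrow \Br k \to \Br X$ for a suitable cyclic extension, which translates into a concrete splitting condition on $M$ as a $\Z[G]$-module.

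The exceptional case is pinned down as follows: one needs $\HH^1(G,M) \cong \Z/4$, which forces $4 \mid |G|$ and a lattice with a $\Z/4$ worth of first cohomology that is \emph{not} split off by any cyclic subgroup. Among $\Z/4$, $D_4$, and the groups of order $\ge 12$, the computation should show: $G = \Z/4$ or $G = V_4$ gives only $2$-torsion (or $0$) and it is always cyclic; $G = A_4, S_4$ give $0$ or $2$-torsion realized cyclically; and $G = D_4$ gives $\Z/4$ exactly when $L \subseteq K$ with $\Gal(K/L) = \Z/4 \le D_4$ (the cyclic subgroup of rotations), because then the $\sqrt a$-twist interacts with the order-$4$ fiber structure to produce a $\Z/4$ class whose restriction to every cyclic subgroup of $D_4$ vanishes (each cyclic subgroup is either the center $\Z/2$, a rotation subgroup $\Z/4$, or a reflection $\Z/2$, and one checks the class dies on each), so the generator is genuinely non-cyclic of order $4$. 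The main obstacle I anticipate is the bookkeeping in step (ii): correctly extracting the $G$-lattice $M$ from the boundary geometry (getting the rank, the permutation summands, and the extension class right), and then verifying non-cyclicity — i.e. that for the $D_4$ configuration no choice of cyclic subextension of $k$ splits the relevant class — which requires a careful, but finite, check of $\Res$ to all cyclic subgroups of $D_4$ against the explicit cocycle generating $\HH^1(D_4, M) \cong \Z/4$.
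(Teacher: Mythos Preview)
Your overall strategy---reduce to $\HH^1(G,\Pic X_K)$ via Hochschild--Serre and run a case analysis on $\Gal(K/k)$---matches the paper's. The execution differs: rather than compute $\Pic\Xbar$ by excising boundary divisors from a projective model, the paper works directly on the affine surface via the exact sequence
\[
0\to\calO(U_K)^\times/K^\times\to\Div_{X_K\setminus U_K}(X_K)\to\Pic X_K\to 0
\]
(here $U=\{P(t)\neq 0\}$), identifies the middle term as the permutation module $\Z[k_t/k]\otimes\Z[L/k]$, and then computes $\HH^2(G,\calO(U_K)^\times/K^\times)$ from the further sequence $0\to\Z\to\Z[k_t/k]\oplus\Z[L/k]\to\calO(U_K)^\times/K^\times\to 0$ together with Shapiro's lemma. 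This converts each case into reading off $\HH^i(G,\Z)$ and $\HH^i(H,\Z)$ for explicit subgroups $H\leq G$, which is tidier than writing out the lattice and its cocycles by hand; your direct lattice computation would also work but with more bookkeeping.

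Your non-cyclicity argument has a genuine gap. The proposed test---that the order-$4$ class restricts to zero on every cyclic subgroup of $D_4$, hence is non-cyclic---does not prove what you want. A class $\alpha\in\Br X$ is cyclic if $\alpha=\chi\cup f$ for some character $\chi\colon G_k\to\Q/\Z$ and some $f\in k(X)^\times$; the cyclic extension cut out by $\chi$ need not lie inside $K$, so vanishing of $\res^G_H\alpha$ for cyclic $H\leq G=\Gal(K/k)$ says nothing about splitting by cyclic extensions of $k$ outside $K$. (Your alternate phrasing, that cyclic classes lie in the image of $\HH^2(\Z/n,\Z)\to\cdots\to\Br k\to\Br X$, is also off: cyclic algebras over $k(X)$ are cup products of a character with a \emph{function} on $X$, not elements of $\Br k$.) The paper approaches this point differently and more cautiously: it invokes the criterion that $(F/k,f)$ lies in $\Br X$ iff $\divv(f)=N_{F/k}(D)$ for some $D\in\Div X_F$, observes that a $D_4$-extension $K/k$ contains no cyclic degree-$4$ extension of $k$, and concludes only that the generator ``cannot easily be seen to be cyclic, and in fact may not be representable in this way''---so it does not claim a complete proof of non-cyclicity either, but the reasoning is at least aimed at the right obstruction. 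If you want to salvage your approach, you must rule out \emph{all} cyclic degree-$4$ splitting fields, not just those contained in $K$.
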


In \S\ref{sec: BrX_isomtype} we prove a more detailed version of Theorem \ref{thm: short version degree 4}, and an give an explicit description of Brauer classes in the nontrivial cases in \S\ref{subsec: H2BrU}. For higher degree polynomials, the calculations become increasingly intricate, and a complete classification would be quite involved. However, we prove (see \S \ref{subsec: H2BrU}, Theorem \ref{thm: BrX_explicitrep} for an explicit version):

\begin{theorem} \label{thm: short version general dihedral} Let $P(t)$ be a polynomial of degree $n$ over a number field $k$ that is irreducible and has splitting field $K$. Let $X \subset \A_k^3$ be an affine Ch\^atelet surface as in (\ref{eqn: Xoverk}) and let $L = k(\sqrt{a})$. If $\Gal(K/k) \isom D_n$, then the Brauer group of $X$ modulo constant algebras is non-trivial and generated by a non-cyclic algebra of order $n$ if $L$ is contained in $K$ and $\Gal(K/L) \isom \Z/n\Z$.
\end{theorem}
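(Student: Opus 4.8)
The plan is to compute $\Br X/\Br_0 X$ via the now-standard pipeline: first reduce to the smooth compactification or to an open piece where the Brauer group is geometrically understood, then pass to the Hochschild–Serre spectral sequence for $\overline{X} = X \times_k \overline{k}$. Since $X$ is an affine Châtelet surface, $\Br \overline{X} = 0$ (it is a rational surface, indeed geometrically it is an open subset of a del Pezzo surface or a conic bundle over $\A^1$), so the spectral sequence gives an exact sequence identifying $\Br_1 X/\Br_0 X = \HH^1(k, \Pic \overline{X})$ (using $\HH^3(k,\overline{k}^\times) = 0$ for $k$ a number field, after accounting for the units contribution $\HH^1(k, \overline{k}[X]^\times/\overline{k}^\times)$, which one checks vanishes here since the relevant divisors at infinity are defined over small extensions). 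So everything comes down to computing the Galois module $\Pic \overline{X}$ and the cohomology group $\HH^1(k, \Pic \overline{X})$, and then, in the non-cyclic case, exhibiting a generator.

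The key steps, in order: (1) Write down $\Pic \overline{X}$ explicitly. Over $\overline{k}$ the polynomial $P(t)$ splits as $\prod_{i=1}^n (t - \theta_i)$, and the surface $x^2 - a y^2 = c P(t)$ acquires, over $L \cdot \overline{k} = \overline{k}$, the factorization of the left side; the geometry is that of a conic bundle over $\P^1_t$ with degenerate fibers over the $\theta_i$ and over $t = \infty$. The divisor class group $\Pic \overline{X}$ is generated by the components of these singular fibers modulo the fiber-class relation, and one gets a permutation-type module built from the $\theta_i$ with a twist recording where $\sqrt{a}$ lives — concretely $\Pic \overline{X}$ sits in an exact sequence relating it to $\Z[\Gal(K/k)/\Gal(K/L\text{-stabilizer})]$-type lattices. (2) Feed this into $\HH^1(k, \Pic \overline{X})$: under the hypothesis $L \subseteq K$ and $\Gal(K/L) \cong \Z/n\Z$ with $\Gal(K/k) \cong D_n$, compute this cohomology group — I expect it to come out cyclic of order $n$, detecting the "diagonal" class coming from the nontrivial element of $\Gal(K/L)^\vee$ that is not stable under the outer $\Z/2$. (3) Show the resulting class is \emph{not} represented by a cyclic algebra: this is where one uses that a cyclic algebra of degree $n$ would force, by its symbol structure, a decomposition compatible with a cyclic subextension that the $D_n$-action does not respect; equivalently, one shows the corresponding class in $\HH^1(k,\Pic\overline{X})$ does not lift to a cup product of a character with a function, by a direct obstruction computation (this is precisely the content imported from Theorem \ref{thm: BrX_explicitrep}, whose explicit representative one writes down in $\kk(X)$).

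The main obstacle will be step (1)–(2): correctly identifying the Galois-module structure of $\Pic \overline{X}$ for general $n$, keeping careful track of the contribution of the fiber at infinity (whose combinatorics depend on the parity of $n$ and on whether $t=\infty$ is a degenerate fiber) and of the unit group $\overline{k}[X]^\times/\overline{k}^\times$, and then executing the $D_n$-cohomology computation uniformly in $n$. The non-cyclicity in step (3) is conceptually the crux but, granting the explicit representative from Theorem \ref{thm: BrX_explicitrep}, reduces to checking that an algebra of index $n$ with the exhibited splitting behavior cannot be similar to a cyclic one — which follows because its class restricted to $K$ dies while its "period–index" and decomposition pattern over intermediate fields are incompatible with the existence of a degree-$n$ cyclic splitting field inside $K$, the $D_n$ Galois action having no normal cyclic subgroup of the required type acting correctly on the relevant divisors. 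I would organize the argument so that Theorem \ref{thm: short version degree 4} is the case $n=4$ (where $D_4$ indeed has the subgroup $\Z/4\Z$), and the general statement is the natural $D_n$-analogue, with the explicit $\kk(X)$-representative constructed in \S\ref{subsec: H2BrU} doing the heavy lifting for the non-cyclic conclusion.
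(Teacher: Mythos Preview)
Your broad strategy matches the paper's---reduce via Hochschild--Serre to $H^1(G,\Pic X_K)$---but the paper's execution is cleaner and sidesteps the complications you anticipate.

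You frame the computation in terms of a compactification (conic bundle over $\PP^1$, fiber at infinity). The paper never compactifies: Lemma~\ref{lem: fxnsonXbar} shows $\kbar[X]^\times=\kbar^\times$ directly for the affine $X$, so Hochschild--Serre gives $\Br X/\Br_0 X\cong H^1(G,\Pic X_K)$ with no unit correction and no fiber at infinity to track. More importantly, the paper does not compute $H^1$ of $\Pic X_K$ head-on. It uses the open $U=\{P(t)\ne 0\}$ and the sequence $0\to \calO(U_K)^\times/K^\times \to \Div_{X_K\setminus U_K}(X_K)\to \Pic X_K\to 0$. The middle term is $\Z[k_t/k]\otimes\Z[L/k]$; under the hypothesis $\Gal(K/L)\cong\Z/n\Z$ this is $\Z[G/H']\otimes\Z[G/H]$ for complementary subgroups of $D_n$, hence induced (Lemma~\ref{claim: induced}), so $H^1(G,\Pic X_K)\cong H^2(G,\calO(U_K)^\times/K^\times)$. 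A second sequence $0\to\Z\to\Z[k_t/k]\oplus\Z[L/k]\to \calO(U_K)^\times/K^\times\to 0$ then reduces everything via Shapiro to the known groups $H^i(D_n,\Z)$, $H^i(\Z/n,\Z)$, $H^i(\Z/2,\Z)$, and $\Z/n\Z$ drops out (with a direct check of the action on divisor classes for even $n$). This two-step d\'evissage is what your plan lacks; it handles all $n$ uniformly without confronting the lattice $\Pic X_K$ directly.

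For non-cyclicity, your period--index sketch is more than needed. The paper observes (in the discussion of cyclic algebras in \S\ref{section: explicit_classes}) that a cyclic representative of order $n$ would require a cyclic degree-$n$ extension of $k$, while the unique cyclic order-$n$ subextension of a $D_n$-extension is $K/L$, not an extension of $k$; the explicit non-cyclic generator is then produced as a $2$-cocycle in $H^2(G,\calO(U_K)^\times)$ via the efficient resolution (Theorem~\ref{thm: BrX_explicitrep}).
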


In these cases, we then construct explicit representatives, in terms of central simple algebras over the function field $\kk(X)$ of $X$, of the finite set of classes of $\Br X$ which generate modulo constant algebras. We prove,

\begin{theorem} There exists an effective algorithm to compute the Brauer-Manin set for affine Ch\^atelet surfaces defined by (\ref{eqn: Xoverk}) with $P(t)$ an irreducible quartic polynomial, or dihedral of arbitrary degree satisfying the conditions of Theorem \ref{thm: short version general dihedral}.
\end{theorem}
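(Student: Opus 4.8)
The plan is to feed the explicit description of $\Br X/\Br_0 X$ obtained in \S\ref{sec: BrX_isomtype} and \S\ref{subsec: H2BrU} (the refined forms of Theorems~\ref{thm: short version degree 4} and~\ref{thm: short version general dihedral}, together with the representatives of Theorem~\ref{thm: BrX_explicitrep}) into the usual recipe for the Brauer--Manin pairing, and to verify that each ingredient can be produced effectively. Recall that $X(\A_k)^{\Br}$ is the set of adelic points $(x_v)$ with $\sum_v \inv_v \calA(x_v) = 0$ for all $\calA \in \Br X$, and that it suffices to let $\calA$ run over a finite generating set $\calA_1,\dots,\calA_r$ of $\Br X/\Br_0 X$. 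The first task, which is purely Galois-theoretic, is effective: factor $P$, compute its splitting field $K$ and $\Gal(K/k)$, locate $L=k(\sqrt a)$ relative to $K$, determine which case of the classification applies, and thereby write down the $\calA_j$ both abstractly and as central simple algebras over $\kk(X)$ (quaternion algebras in all but the exceptional dihedral cases, and the explicit non-cyclic crossed product of Theorem~\ref{thm: BrX_explicitrep} otherwise). When $\Br X/\Br_0 X$ is trivial there is nothing to do, as then $X(\A_k)^{\Br}=X(\A_k)$.

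Second, I would compute from the coefficients of $P$, from $a$, $c$ and $\disc P$ (and including the archimedean places) a finite set $S$ of places of $k$ such that $X$ extends to a smooth model $\calX$ over $\OO_{k,S}$ on which each $\calA_j$ is unramified; enlarging $S$ if needed, this is effective. Then for $v\notin S$ and $x_v\in\calX(\OO_v)$ one has $\inv_v\calA_j(x_v)=0$, so for any adelic point the defining sum is finite and the Brauer--Manin conditions constrain only the components at the finitely many places $v\in S$. Deciding whether $X(k_v)\neq\emptyset$ for $v\in S$ is itself effective: Hensel's lemma applied to the conic fibration over $\A^1_{k_v}$ at finite $v$, and sign considerations at real $v$.

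Third, and this is the crux, I would make the evaluation map $x_v\mapsto\inv_v\calA_j(x_v)$ from $X(k_v)$ to $\Br k_v\hookrightarrow\Q/\Z$ explicitly computable for each $v\in S$. For a quaternion representative this is a Hilbert-symbol computation, for which standard algorithms exist. For the non-cyclic representative of Theorem~\ref{thm: BrX_explicitrep}, presented by an explicit $2$-cocycle for $\Gal(K/k)$, I would use the effective cocycle-lifting procedure of \S\ref{subsec: H2BrU}: after checking that the representative is regular in a $v$-adic neighbourhood of $x_v$ (replacing it by an equivalent representative there if it is not, which is possible since $\calA_j\in\Br X$), specialize the cocycle at $x_v$ to obtain a $2$-cocycle representing $\calA_j(x_v)\in\Br k_v$, and read off its local invariant by computing the relevant valuation. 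Because $\calA_j$ has finite order and $x_v\mapsto\inv_v\calA_j(x_v)$ is locally constant in the $v$-adic topology, its image lies in $\tfrac1N\Z/\Z$ for an explicit $N$, and $X(k_v)$ decomposes into finitely many effectively describable open-and-closed pieces on which it is constant.

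Finally, $X(\A_k)^{\Br}$ is the subset of $\prod_v X(k_v)$ (with the integrality condition at $v\notin S$) cut out by the $r$ linear equations $\sum_{v\in S}\inv_v\calA_j(x_v)=0$ in the finitely many locally constant functions produced above; hence it is effectively describable, and in particular one can decide whether it is empty. The main obstacle is the third step: unlike the cyclic case there is no closed-form symbol for the invariant of a non-cyclic class, so one must genuinely unwind the crossed-product description and lift $2$-cocycles explicitly, and one must take care that the chosen function-field representative is, or can be made, regular at the local point being substituted so that specialization is legitimate.
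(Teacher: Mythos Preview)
Your outline matches the paper's approach: the theorem is not given a self-contained proof but is the synthesis of \S\ref{section: explicit_classes} (producing explicit $2$-cocycle representatives) and \S\ref{sec: cocycle_lifting} (evaluating their local invariants). Two points deserve sharpening.

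First, your pointer to \S\ref{subsec: H2BrU} for the ``effective cocycle-lifting procedure'' in step~3 is misplaced: that section constructs the representative $\calA\in H^2(G,\calO(U_K)^\times)$, but the lifting needed to \emph{evaluate} $\inv_v\calA(x_v)$ for a non-cyclic class is the content of \S\ref{sec: cocycle_lifting}, and your phrase ``read off its local invariant by computing the relevant valuation'' hides the real work. Concretely, after specializing one has a cocycle $\varphi\in H^2(Q_1,K_w^\times)$ with $Q_1=\Gal(K_w/k_v)$ dihedral; the paper passes to the compositum $A=K_w\cdot\kcycl$ with the unramified degree-$d$ extension killing the class, uses inflation--restriction to transport $\inf_1[\varphi]$ to a cocycle $[\varphitilde]$ for the cyclic group $\Gal(\kcycl/k_v)$, and solves several explicit Hilbert~90 problems (the bookkeeping depending on whether $\Gal(A/k_v)$ is a direct or semidirect product) to produce the $1$-cochain $\psi$ from which the invariant is extracted as a valuation via~(\ref{eqn: psirho}). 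None of this is automatic.

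Second, the representative of Theorem~\ref{thm: BrX_explicitrep} lives in $\Br U$ and agrees with a generator of $\Br X/\Br_0 X$ only up to an \emph{unknown} constant algebra (Proposition~\ref{prop: KT08 II}). Your plan to ``replace it by an equivalent representative'' regular at $x_v$ presumes you can identify that constant; the paper sidesteps this by fixing a local base point $P\in X(k_v)$ and computing the difference $\inv_v\calA(x_v)-\inv_v\calA(P)$, which is independent of the constant and suffices to determine the image of the evaluation map and hence the Brauer--Manin set. With these two refinements your outline is exactly the paper's.
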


We then use this algorithm to prove

\begin{prop} 
	Let $S = \{2, \infty\}$. The Brauer-Manin obstruction is insufficient to explain failures of the S-integral Hasse principle for the $\Z$-schemes $\calX_m: x^2 + y^2 + t^4 = m$, with $m = 22,43,67,70,78,93,177, \dots$%\footnote{this consists of the first few examples in a list whose finiteness is only conjectured}
\end{prop}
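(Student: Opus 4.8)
The plan is to apply the effective algorithm established earlier in the paper to each surface $\calX_m\colon x^2 + y^2 + t^4 = m$ in turn, so the first task is to check that each $\calX_m$ falls into the class of varieties the algorithm can handle. Here $a = -1$, $c = 1$, and $P(t) = t^4 - m$; one must verify that $P(t) = t^4 - m$ is an irreducible quartic over $\Q$ for each listed $m$ (equivalently, $m$ is neither a square, nor of the form $-4u^4$, nor $4u^4$, and $\sqrt{m}\notin\Q$, $\sqrt{-m}\notin\Q$; for positive non-square $m$ this is essentially automatic), so that Theorem \ref{thm: short version degree 4} and the accompanying explicit description apply. In fact, for $P(t) = t^4 - m$ with $m>0$ not a square the splitting field is $K = \Q(m^{1/4}, i)$, which is $D_4$ over $\Q$, and $L = \Q(\sqrt{-1}) = \Q(i)\subset K$ with $\Gal(K/L)\cong\Z/4\Z$ precisely when $m^{1/4}$ generates a cyclic quartic extension of $\Q(i)$ — so these are exactly the ``interesting'' non-cyclic case of Theorem \ref{thm: short version degree 4}, and in particular $\Br \calX_m/\Br_0\calX_m$ is computable and generated by the explicit order-$4$ class constructed in \S\ref{subsec: H2BrU}.

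Next I would assemble the three inputs the obstruction computation needs. First, rational solubility: exhibit an explicit point in $\calX_m(\Q)$ for each $m$ (e.g. for $m = 22$ one has $3^2 + 3^2 + 2^4 = 9+9+4 = 22$, and similar small solutions exist for the other values) — this shows the varieties are not failing for the trivial reason that $X(\Q) = \emptyset$, and it is what makes the example genuinely about the \emph{integral} Hasse principle. Second, $S$-integral solubility with $S = \{2,\infty\}$: show $\calX_m(\Z[\tfrac12])\ne\emptyset$ is \emph{false} while $\calX_m(\Z_p)\ne\emptyset$ for all odd $p$ and $\calX_m(\R)\ne\emptyset$ and $\calX_m(\Q_2)\ne\emptyset$; insolubility in $\Z[\tfrac12]$ is a finite check because modulo any auxiliary prime the equation constrains $t$, or more simply because — wait, one cannot use compactness of $\calX_m(\R)$ here since we have inverted nothing archimedean-useful, so the argument must be a genuine congruence/size argument at the primes away from $2$; this is the content that distinguishes the present examples from the earlier literature. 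Third, the Brauer computation: run the effective algorithm to produce the generator $\alpha\in\Br\calX_m$ modulo constants, and then compute, for each place $v$, the image of the local invariant map $\inv_v\circ\ev_\alpha\colon \calX_m(\Z_v)\to\Q/\Z$ (for $v\nmid 2\infty$ this uses that $\alpha$ extends to a smooth $\Z_v$-model over an appropriate open, so the invariant is $0$), and check that the sum $\sum_v\inv_v\alpha(P_v)$ ranges over all of $\Z/4\Z$ (or at least hits $0$) as $(P_v)$ runs over $\prod_v\calX_m(\Z_v)$ — and likewise over $\prod_{v}\calX_m(\Z[\tfrac12]_v)$ for the $S$-integral statement. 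If that sum is surjective onto its possible range, $\calX_m(\A_\Z)^{\Br}\ne\emptyset$ even though $\calX_m(\Z[\tfrac12]) = \emptyset$, which is exactly the assertion.

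The main obstacle I anticipate is the local invariant computation at the ``bad'' finite places — the primes dividing $2$, the primes ramifying in $K$, and the primes dividing the relevant resultants/discriminants — since $\alpha$ is a genuinely non-cyclic class of order $4$ and one cannot simply read off its invariant from a Hilbert symbol. This is precisely where the paper's ``effective lifting of cocycles in Galois cohomology'' technology is needed: one must lift the $2$-cocycle representing $\alpha$ explicitly over each $\Q_v$, evaluate at a local point, and pair with the local reciprocity map. I would organize this as a finite table (one row per $m$, columns indexed by the finitely many bad places, entries the set of attained local invariants), and then the surjectivity of the global sum is a mechanical check; the ellipsis ``$\dots$'' in the statement indicates the list continues, so I would also isolate the numerical criterion on $m$ that the computation verifies, so that new values can be certified by the same table-filling procedure. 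A secondary subtlety is bookkeeping at $v = \infty$: since $\calX_m(\R)$ is nonempty and connected for $m>0$ and $\alpha$ is locally constant on it, $\inv_\infty\alpha$ is a single well-defined element of $\tfrac12\Z/\Z$, and one must make sure its value is incorporated correctly into the global sum rather than silently dropped.
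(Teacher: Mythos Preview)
Your outline is broadly on target for the Brauer computation, but there is a genuine gap at the step you yourself flag: the $\Z[\tfrac12]$-insolubility. You write that ``the argument must be a genuine congruence/size argument at the primes away from $2$'', but that is the wrong direction, and you never actually produce an argument. The paper's trick is entirely at the prime $2$: given a putative point $\bigl(\tfrac{x_0}{2^{r_0}},\tfrac{y_0}{2^{r_1}},\tfrac{t_0}{2^{r_2}}\bigr)\in\calX_m(\Z[\tfrac12])$, clear denominators to obtain an integer solution of $x^2+y^2+t^4 = 2^{4r_2}m$. If $r_2>0$ then the right-hand side is divisible by $4$, and the cited local analysis (\cite[Rem.~8.12.3]{CTSSD87b}) shows such an equation already has no $\Z_2$-point; so $r_2=0$ and one is reduced to the known $\Z$-insolubility of the original equation. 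Without this (or an equivalent) argument, your Step~2 is incomplete.

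Two smaller points. First, your rational-point example is wrong: $3^2+3^2+2^4 = 9+9+16 = 34$, not $22$; in any case the paper does not exhibit rational points directly but invokes \cite{CTSSD87b} to conclude the Hasse principle holds for the compactification. Second, your Step~3 is more elaborate than necessary. The paper does not tabulate invariants at all bad primes and check the global sum surjects; it shows that for each listed $m$ there is a \emph{single} prime $p$ (namely $p=2$ or a prime dividing $m$) at which the evaluation map $\calX_m(\Z_p)\to\tfrac14\Z/\Z$ is already surjective. Surjectivity at one place forces the Brauer--Manin set to be nonempty, regardless of what happens elsewhere, so the bookkeeping you anticipate at $\infty$ and at other bad primes is unnecessary. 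This is exactly where the machinery of \S\ref{sec: cocycle_lifting} (base-point normalization, cocycle lifting over the local compositum) is invoked, and the output is the table in Figure~1.
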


\subsection{Strategy and Outline}
In \S \ref{sec: enumeration_of_BrX}, we prove results about the geometry of affine Ch\^atelet surfaces. In particular, we give a concrete description of the geometric Picard group and the Galois action on the divisors which generate this group over a separable closure of the ground field. This allows us to enumerate isomorphism types of the (algebraic) Brauer group at the end of the section.

In \S \ref{section: explicit_classes} we detail an approach for computing representatives of (non-cyclic) Brauer classes explicitly via cocycles in Galois cohomology. This relies on the use of non-standard resolutions to efficiently compute cohomology. In \S \ref{subsec: efficres} with this efficient resolution, we are able to write down 1-cocycles whose classes generate the group $H^1(\Gal(K/k), \Pic X_K)$. In \S \ref{subsec: H2BrU}, we relate these 1-cocycles to 2-cocycles in $H^2(\Gal(K/k), \calO(U_K)^\times)$, where $U$ is the open subscheme defined by $P(t) \ne 0$. These 2-cocycles represent the restrictions to $\Br U$ of classes in $\Br X$ which generate. This enables us to prove Theorem \ref{thm: BrX_explicitrep}, in which we construct explicit representatives for the generators of the Brauer groups computed in the previous section.

In \S \ref{sec: cocycle_lifting}, with explicit representatives of Brauer classes in hand, we give an effective algorithm for computing the Brauer-Manin set, $\calX(\A_{k,S})^{\Br}$. In particular, we provide an approach for effective lifting of 2-cocycles in Galois cohomology, based on the ideas of \cite{Preu}.

Finally, in \S \ref{sec: concrete examples} we employ the tools from the previous sections to compute the Brauer-Manin set for a collection of affine Ch\^atelet surfaces. We prove that the Brauer-Manin obstruction is not sufficient to explain all failures of the integral Hasse principle for these surfaces.

\subsection{Terminology}
For a field $k$ of characteristic 0, let $\kbar$ be a fixed algebraic closure of $k$, and $\ksep$ a fixed separable closure within $\kbar$. Let $G_k$ denote the absolute Galois group $\Gal(\ksep/k)$. A $k$-variety is a separated scheme of finite type over $k$. For a $k$-scheme $X$ and a field extension $K/k$, we set $X_K := X \times_{\Spec k} \Spec K$, and let $\Xbar = X_{\kbar}$.  Let $K[X] := H^0(X_K, \calO_{X_K})$ denote the ring of global functions on $X_K$, and let $K[X]^\ast$ denote the group of units in that ring. On occasion we will also use the notation $\calO(X)^\times$ to denote the units.

Let $\Br(k)$ denote the Brauer group of $k$. For a scheme $X$ over $k$, we have the following filtration of the Brauer group \cite{Gro68}:
\[ \Br_0 X := \im(\Br k \to \Br X) \subseteq \Br_1 X := \ker(\Br X \to \Br \Xsep) \subseteq \Br X := H^2_{\et}(X, \G_m). \]
The elements in $\Br_0 X$ are said to be constant, and elements in $\Br_1 X$ are said to be algebraic.

The Picard group of $X$ is $\Pic X := \Div X/ \Princ X$, where $\Div X$ denotes the group of Weil divisors on $X$ and $\Princ X$ denotes the group of principal divisors on $X$. For a divisor $D \in \Div X$, we write $[D]$ for its equivalence class in $\Pic X$. For any subvariety $U \subset X$, let $\Div_{X \setminus U}(X)$ denote the classes of divisors supported in $X \setminus U$.

Now let $k$ be a number field, $\calO_k$ its ring of integers, and let $\Omega_k$ denote its set of places. For $v \in \Omega_k$ finite, let $k_v$ be the completion of $k$ at the place $v$, and for $v$ archimedean let $\calO_v$ be the ring of integers in $k_v$. For each place $v$, class field theory gives an embedding $\inv_v \colon \Br k_v \to \Q/\Z$.

The adele ring with its usual adelic topology is denoted by $\A_k$. For $S$ a finite set of places of $k$ containing all archimedean places, let $\calO_S$ be the ring of $S$-integers of $k$.
 Let $\calX$ be a separated scheme of finite time over $\calO_S$. The set $\calX(\calO_S)$ denotes the set of $S$-integral points of $X$. Let $X:= \calX \times_{\calO_S} k$.

Given a scheme $X$ and a point $x_v \in X(k_v)$, there exists a restriction homomorphism $x_v^\ast \colon \Br X \to \Br k_v \isom \Q/\Z$, where the latter isomorphism is $\inv_v$, the local invariant map. For a fixed class $\alpha \in \Br X$, the local evaluation map is 
\[ \ev_{\alpha,v} \colon X(k_v) \to \Q/\Z, \hspace{1 em} x_v \mapsto \alpha(x_v):=x_v^\ast(\alpha). \]
Moreover, there exists a finite set of places $S_{\alpha, \calX}$ with $S \subset S_{\alpha, \calX}$ such that for any $v \not \in S_{\alpha, \calX}$ and $x_v \in \calX(\calO_v)$, we have $\alpha(x_v) = 0$.

As above, let $S$ be a finite set of places containing all archimedean places, and let $X/k$ be a non-proper variety. There exists a pairing
\[ \Big [\prod_{v \in S} X(k_v) \times \prod_{v \not \in S} \calX(\calO_v) \Big] \times \Br X \to \Q/\Z. \]
The Brauer-Manin set in this context is by definition the left kernel of this pairing, i.e.
\[\left( \prod_{v \in S} X(k_v) \times \prod_{v \not \in S} \calX(\calO_v) \right)^{\Br X} = \left \{ x_v \in \prod_{v \in S} X(k_v) \times \prod_{v \not \in S} \calX(\calO_v) \Big\rvert  \sum_v \inv_v \alpha(x_v) = 0 \right \}.
\]
We have the inclusions: 
\[ \calX(\calO_S) \subset \left( \prod_{v \in S} X(k_v) \times \prod_{v \not \in S} \calX(\calO_v) \right)^{\Br X} \subset  \prod_{v \in S} X(k_v) \times \prod_{v \not \in S} \calX(\calO_v) .\]

If the product $\prod_{v \in S} X(k_v) \times \prod_{v \not \in S} \calX(\calO_v)$ is non-empty, but the Brauer-Manin set is empty, we say there is a \textbf{ Brauer-Manin obstruction} to the existence of an $S$-integral point on $\calX$. The Brauer-Manin set depends only on the quotient $\Br X/ \Br_0 X$.

\subsection*{Acknowledgements} I thank Felipe Voloch for suggesting this problem and many valuable conversations. I am indebted to Bianca Viray and Anthony V\'arilly-Alvarado for countless helpful discussions and comments on this work. I also thank Andrew Kresch for several correspondences. All computations were done using $\magma$ \cite{MAGMA}. The relevant scripts are available from the author.

%%%%%%%%%%%%%%%%%%%%%%%%%%%%%%%%%%%%%%%%%%%%%%%%%%%%%%%%%%%%%%%%%%%%%%%%%%%%%%%%
%%%%%%%%%%%%%%%%%%		      Brauer Groups     	    %%%%%%%%%%%%%%%%%%%%%%%%
%%%%%%%%%%%%%%%%%%%%%%%%%%%%%%%%%%%%%%%%%%%%%%%%%%%%%%%%%%%%%%%%%%%%%%%%%%%%%%%%
\section{Enumeration of Brauer groups} \label{sec: enumeration_of_BrX}
In this section, we study the geometry of affine Ch\^atelet surfaces. In order to compute the Brauer group of such surfaces, we first need a concrete description of the divisors generating the Picard group and the Galois action on those divisors. With this description in hand, we then enumerate the isomorphism types of the Brauer groups of families of affine Ch\^atelet surfaces.

\subsection{Geometry of $\Xbar$}
	Consider the surjective morphism
	\[ \pi \colon X \to \A^1_k, \hspace{1 em} (x,y,t) \mapsto t.\]
	Let $U_0 \subset \A_1^k$ be defined by the condition $P(t) \ne 0$, and let $U = \pi^{-1}(U_0) \subset X$. As $U_0$ is an open subscheme in $\A_k^1$, $\Pic(U_0) = 0$. Thus, $\Pic \Ubar = 0$, since 
	\[ \Ubar = U \times_k \kbar \isom \overline{U_0} \times \G_{m, \kbar}. \]

	Over $\kbar$ it will be useful to write 
	\[ \Xbar \colon u_1 u_2 = c(t - e_1)(t - e_2) \dots (t - e_n) \]
	with $u_1 = x - \sqrt{a}y$ and $u_2 = x + \sqrt{a} y$. There exist $n$
	reduced fibers of $\overline{\pi} \colon \Xbar \to \A_{\kbar}^1$, corresponding to the roots of $P(t)$, each consisting of two irreducible components. Thus, $\DivXminusU(\Xbar)$ is free of rank $2n$ with basis given by the divisors $D_{i,j}$, $i = 1,2$ and  $j = 1,\dots, n $ within defining equations
	\[ D_{i,j} : = \{t - e_i = 0, u_j = 0\}. \]

	We observe first that $X$ is a generalized affine Ch\^atelet surface, then $\Br \Xbar = 0$.
	\begin{lemma} \cite[Lemma 4.2]{DW16} \label{lem: BrXBar} 
		Let $k$ be an algebraically closed field of characteristic zero. Let $
		P(t) = c(t- e_1) \dots (t - e_r) \in k[t].$ Let $Y \subset \A_k^{s+1}$ be the affine variety defined by $z_1 \dots z_s = P(t)$. Then $\Br Y = 0$.
	\end{lemma}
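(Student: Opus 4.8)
The plan is to reduce the statement to the vanishing of the Brauer group of a product of the form $\overline{U_0} \times \G_m^{s-1}$, exactly as in the paragraph preceding the lemma, and then invoke the known vanishing of Brauer groups of such open subsets of rational varieties over an algebraically closed field. Concretely, let $Y \subset \A_k^{s+1}$ be defined by $z_1 \cdots z_s = P(t)$ with $P(t) = c(t-e_1)\cdots(t-e_r)$ and the $e_i$ distinct, and let $Y^\circ \subset Y$ be the open subscheme where $P(t) \ne 0$ (equivalently, where all $z_i \ne 0$). First I would observe that on $Y^\circ$ one may solve $z_s = P(t)/(z_1 \cdots z_{s-1})$, so that $Y^\circ \isom U_0 \times \G_m^{s-1}$ where $U_0 = \A_k^1 \setminus \{e_1,\dots,e_r\}$; this is a smooth rational affine variety. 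Since $k$ is algebraically closed of characteristic zero, a theorem of Grothendieck (the Brauer group of a smooth variety injects into that of its function field, combined with the fact that $\Br$ of a rational variety over an algebraically closed field of characteristic zero is computed by the torsion in $H^3_{\et}$ of the compactification — or more directly, the computation of $\Br$ of open subsets of $\PP^1 \times (\PP^1)^{s-1}$ via the purity/Gysin sequence) gives $\Br Y^\circ = 0$; in fact $\Br(\overline{U_0} \times \G_m^{s-1})$ vanishes because $\overline{U_0}$ is an open curve, hence has trivial Brauer group, and tensoring with tori does not introduce Brauer classes over an algebraically closed field.

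The second step is to pass from $Y^\circ$ back to $Y$. The complement $Y \setminus Y^\circ$ is the union of the fiber components over $t = e_i$; $Y$ itself is smooth (this uses that $P$ is separable, so the equation $z_1\cdots z_s = P(t)$ has nonvanishing differential everywhere — at a point of the fiber over $e_i$ exactly one $z_j$ vanishes and $P'(e_i)\ne 0$). For a smooth variety, restriction $\Br Y \to \Br Y^\circ$ is injective (Grothendieck, purity for the Brauer group in codimension $\ge 1$, or simply the injectivity of $\Br Y \hookrightarrow \Br k(Y) = \Br k(Y^\circ)$ since $Y$ is smooth and integral). Therefore $\Br Y \hookrightarrow \Br Y^\circ = 0$, giving $\Br Y = 0$.

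The main obstacle — really the only nontrivial input — is the vanishing $\Br(\overline{U_0} \times \G_m^{s-1}) = 0$; everything else is formal. This can be handled by induction on $s$ using the projection $\overline{U_0} \times \G_m^{s-1} \to \overline{U_0} \times \G_m^{s-2}$ and the fact that for a smooth variety $V$ over an algebraically closed field of characteristic zero one has $\Br(V \times \G_m) \isom \Br V \oplus H^1_{\et}(V, \Q/\Z)$ (a standard consequence of the projective bundle / Gysin computation for $\PP^1$), noting that $H^1_{\et}(\overline{U_0} \times \G_m^{j}, \Q/\Z)$ contributes nothing new to the Brauer group since these $H^1$ classes pair trivially, and the base case $\Br \overline{U_0} = 0$ holds because $\overline{U_0}$ is a smooth affine curve over $\kbar$. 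Since this is precisely the content cited from \cite[Lemma 4.2]{DW16}, I would simply quote it; I include the reduction to it here for completeness. $\qed$
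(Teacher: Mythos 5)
Your reduction breaks down at its central step: the claimed vanishing $\Br(U_0 \times \G_{m}^{s-1}) = 0$ is false whenever $r \ge 1$ and $s \ge 2$, which is exactly the situation of interest. The Künneth-type formula you yourself quote, $\Br(V \times \G_m) \isom \Br V \oplus H^1_{\et}(V, \Q/\Z)$ for smooth $V$ over an algebraically closed field of characteristic zero, gives $\Br(U_0 \times \G_m) \isom H^1_{\et}(U_0, \Q/\Z) \isom (\Q/\Z)^{r}$, generated by the symbol algebras $(t-e_i,\, z_1)_n$ (cup products of the Kummer class of $z_1$ with the cyclic covers obtained by extracting roots of $t-e_i$); these classes do not ``pair trivially'' and that assertion is not substantiated anywhere in your argument. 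Likewise ``tensoring with tori does not introduce Brauer classes'' is false: already $\Br(\G_m^2) \isom \Q/\Z$ over $\kbar$, generated by the symbols $(z_1,z_2)_n$. So while the injectivity $\Br Y \hookrightarrow \Br Y^{\circ}$ (valid since $Y$ is smooth when the $e_i$ are distinct) is fine, the target is not zero, and the entire content of the lemma is precisely the step you skip: showing that every nontrivial class of $\Br Y^{\circ}$ is ramified along some boundary divisor $D_{i,j} = \{t = e_i,\, z_j = 0\}$ of $Y$, hence does not lie in the image of $\Br Y$. That requires an actual residue computation (or an equivalent device, e.g. covering $Y$ by charts such as $\{z_2\cdots z_s \ne 0\} \isom \A^1 \times \G_m^{s-1}$ and comparing the images of $\Br Y$ in the several charts), and it is this computation that constitutes the cited result; note the paper itself offers no proof but simply invokes \cite[Lemma 4.2]{DW16}, so your ``reduction for completeness'' in fact reduces the lemma to a false statement rather than to the one being cited.

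Two smaller points: you add the hypothesis that the $e_i$ are distinct, which is not in the statement (though it holds in the paper's application, and without it $Y$ can be singular, e.g. $z_1 z_2 = t^2$ at the origin, so your injectivity step would also need revisiting); and the base case of your induction, $\Br U_0 = 0$, is correct (Tsen) but is not where the difficulty lies.
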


	Next, we show that the only invertible functions on $\Xbar$ are the constant functions. 

	\begin{lemma} \label{lem: fxnsonXbar}
		 Let $X$ be a generalized affine Ch\^atelet surface. Then $\kbar[X]^\times = \kbar^\times$.
	\end{lemma}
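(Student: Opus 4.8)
The plan is to identify a global unit on $\Xbar$ with a constant by restricting it to the open set $\Ubar$, where the function field is transparent, and then controlling how such a function can possibly extend across the divisors $D_{i,j}$. First I would take $f \in \kbar[X]^\times$ and restrict to $\Ubar$. On $\Ubar$ the polynomial $P(t)$ is invertible, so the relation $u_1u_2 = cP(t)$ shows $u_1 \in \kbar[\Ubar]^\times$ with $u_2 = cP(t)\,u_1^{-1}$; hence $\kbar[\Ubar] = \kbar[t, P(t)^{-1}][u_1, u_1^{-1}]$ is a Laurent polynomial ring over the coordinate ring of $\overline{U_0} = \A^1_{\kbar}\setminus\{e_1,\dots,e_n\}$, in agreement with the isomorphism $\Ubar \isom \overline{U_0}\times\G_{m,\kbar}$ already noted. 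Using the standard description of the units of a Laurent polynomial ring over a domain, together with $\kbar[\overline{U_0}]^\times = \kbar^\times\cdot\langle t-e_1,\dots,t-e_n\rangle$, this gives
\[ f|_{\Ubar} = \lambda\, u_1^{\,m_0}\prod_{i=1}^{n}(t-e_i)^{\,m_i}, \qquad \lambda\in\kbar^\times,\ m_0,\dots,m_n\in\Z. \]

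Next I would regard this as an equality of rational functions on all of $\Xbar$ (legitimate since $\Ubar$ is dense) and compute $\divv_{\Xbar}(f)$. Because $f$ is a unit of $\kbar[\Xbar]$ and $\Xbar$ is smooth, $\divv_{\Xbar}(f)=0$. On the other side, each of the $n$ fibers of $\Xbar\to\A^1_{\kbar}$ over the $e_i$ is reduced with the two components $D_{i,1},D_{i,2}$, so $\divv_{\Xbar}(t-e_i)=D_{i,1}+D_{i,2}$; and the zero locus of $u_1$ is $\bigcup_i D_{i,1}$, the multiplicities being $1$ by comparison with $\sum_i\divv_{\Xbar}(t-e_i)=\divv_{\Xbar}(c^{-1}u_1u_2)$, so $\divv_{\Xbar}(u_1)=\sum_i D_{i,1}$. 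Substituting, $0=\divv_{\Xbar}(f)=\sum_i(m_0+m_i)D_{i,1}+\sum_i m_i D_{i,2}$. Since the $D_{i,j}$ form a basis of $\DivXminusU(\Xbar)$, all $m_i$ with $i\ge 1$ vanish, and then $m_0=0$, so $f=\lambda\in\kbar^\times$.

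The only step carrying any content is the first one, where I invoke that the units of $\overline{U_0}\times\G_{m,\kbar}$ — equivalently of $\kbar[\overline{U_0}][u_1^{\pm1}]$ — are exactly the products of units pulled back from the two factors (a standard fact over an algebraically closed field), and that on the affine line with finitely many points removed the only invertible functions are scalars times powers of the linear forms vanishing at those points. Granting these, the remainder is bookkeeping with the divisors $D_{i,j}$ already introduced, so I expect the write-up to be short. The one point to be careful about is that the multiplicities of the $D_{i,j}$ in $\divv_{\Xbar}(u_1)$ and in $\divv_{\Xbar}(t-e_i)$ are genuinely $1$; this uses reducedness of the fibers (equivalently, separability of $P$), which is where smoothness of $\Xbar$ enters.
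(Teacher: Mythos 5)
Your proof is correct and follows essentially the same route as the paper: pin down the shape of a unit using the product structure of $\Ubar \isom \overline{U_0}\times\G_{m,\kbar}$ (the paper does this via the $\G_m$ generic fiber of $\overline{\pi}$), then compute its divisor against the basis $D_{i,j}$ of $\DivXminusU(\Xbar)$ and conclude that all exponents vanish. The only cosmetic differences are that you eliminate $u_2$ at the outset, so the defining relation $u_1u_2=cP(t)$ enters at the start rather than in the final step as in the paper, and that your indexing of the $D_{i,j}$ (first index the root, second the component) is swapped relative to the paper's convention.
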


	\begin{proof} 
		Consider the morphism $\overline{\pi} \colon: \Xbar \to \A^1_{\kbar}$. The generic fiber of this morphism is isomorphic to $\G_{m, \kbar(t)}$, hence the regular functions on $\Xbar$ are generated by those of the form $f = u_1^{a_1} u_2^{a_2} g(t)$ with $g(t) \in k(t)$. If $g(t)$ has a zero or pole at $t_0 \not \in \{e_1, \dots, e_n\}$, then either $f$ or $f^{-1}$ will fail to be regular at $\pi^{-1}(t_0)$. Thus, $f$ must have the form 
		\[f = c' u_1^{a_1} u_2^{a_2} (t-e_1)^{b_1} \dots (t - e_n)^{b_n}\]
		for some $c' \in \kbar^\times$. Applying $\divv$ to $f$ yields, 
		\[ \divv(f) = \sum_{i = 1}^2 \sum_{j =1}^n (a_i + b_j) \D_{i,j}. \]
		Hence $f \in \kbar[X]^{\times}$ if and only if $a_1 = a_2 = -b_1 = \dots = -b_n$. However, the defining equation for $X$ forces $f$ to be constant. 
	\end{proof}

	Finally, we conclude this section with the observation that all Brauer classes on $X$ must in fact be algebraic. 

	\begin{lemma} \label{BrXAlg}
		Let $X$ be a generalized affine Ch\^atelet surface as in (\ref{eqn: Xoverk}). Then $\Br X = \Br_1 X$, hence 
		\[ \Br X/ \Br_0 X \isom H^1(G_k, \Pic \Xbar).\]
	\end{lemma}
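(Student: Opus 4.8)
The plan is to deduce $\Br X = \Br_1 X$ from the vanishing of the geometric Brauer group together with the vanishing of geometric units, and then to extract the isomorphism $\Br X/\Br_0 X \isom H^1(G_k,\Pic\Xbar)$ from the Hochschild--Serre spectral sequence. First I would invoke Lemma~\ref{lem: BrXBar}: since $X$ is a generalized affine Ch\^atelet surface, $\Br\Xbar = 0$, so that $\Br X = \ker(\Br X \to \Br \Xsep) = \Br_1 X$ (strictly one works with $\Xsep$ rather than $\Xbar$, but in characteristic zero $\Br \Xsep = \Br\Xbar$, or one simply notes the geometric argument of Lemma~\ref{lem: BrXBar} applies verbatim over $\ksep$). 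This gives the first assertion.

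For the second assertion, I would write down the Hochschild--Serre spectral sequence $H^p(G_k, H^q_{\et}(\Xsep,\G_m)) \Rightarrow H^{p+q}_{\et}(X,\G_m)$ and analyze low-degree terms. The seven-term exact sequence reads
\[
0 \to H^1(G_k, \ksep[X]^\times) \to \Pic X \to (\Pic \Xsep)^{G_k} \to H^2(G_k, \ksep[X]^\times) \to \ker\big(\Br_1 X \to H^1(G_k,\Pic\Xsep)\big) \to H^1(G_k,\Pic\Xsep) \to H^3(G_k, \ksep[X]^\times).
\]
By Lemma~\ref{lem: fxnsonXbar}, $\ksep[X]^\times = \ksep{}^\times$ (the proof given over $\kbar$ works over $\ksep$), so $H^i(G_k, \ksep[X]^\times) = H^i(G_k,\ksep{}^\times)$ for all $i$. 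Now $H^1(G_k,\ksep{}^\times) = 0$ by Hilbert 90, and more importantly the term $H^2(G_k,\ksep{}^\times) = \Br k$ maps into $\Br_1 X$ precisely as $\Br_0 X$ via the structure morphism $X \to \Spec k$ (which has a section after base change, or one uses that the composite $\Br k \to \Br X \to H^1(G_k,\Pic\Xsep)$ is zero). Therefore, modding out by $\Br_0 X$ kills the contribution of $H^2(G_k,\ksep{}^\times)$, and the map $\Br_1 X/\Br_0 X \to H^1(G_k,\Pic\Xsep)$ becomes injective. Combined with $\Br X = \Br_1 X$, this yields the claimed isomorphism $\Br X/\Br_0 X \isom H^1(G_k,\Pic\Xsep) = H^1(G_k,\Pic\Xbar)$.

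Two small points need care. First, to identify $\Pic \Xsep$ with $\Pic\Xbar$ one uses that $X$ is smooth and of finite type in characteristic zero, so the Picard group does not change under extension to the algebraic closure of a separably closed field; alternatively one can run the entire argument with $\Xsep$ throughout. Second, the surjectivity (not just injectivity) of $\Br_1 X/\Br_0 X \to H^1(G_k,\Pic\Xsep)$ requires that the last map in the seven-term sequence, $H^1(G_k,\Pic\Xsep) \to H^3(G_k,\ksep{}^\times)$, vanishes; this holds because that map factors through $H^1(G_k,\ksep{}^\times)$-type obstructions which vanish once a local point exists, or more cleanly because $X$ has points everywhere locally in our situation, or simply because the relevant differential can be checked to be zero using that $\Pic X \to (\Pic\Xsep)^{G_k}$ is surjective (which in turn follows from $\Br\Xsep = 0$ being irrelevant here — rather from the existence of explicit divisor classes). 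I expect the main obstacle to be precisely this bookkeeping around the edge maps of the spectral sequence: making sure that "modulo constant algebras" exactly cancels the $\Br k$ term and that nothing obstructs surjectivity onto $H^1(G_k,\Pic\Xbar)$. Everything else is a direct citation of the two preceding lemmas.
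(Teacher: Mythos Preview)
Your approach is exactly the paper's: invoke Lemma~\ref{lem: BrXBar} for $\Br X = \Br_1 X$, invoke Lemma~\ref{lem: fxnsonXbar} for $\ksep[X]^\times = \ksep{}^\times$, and read off the isomorphism from the low-degree terms of the Hochschild--Serre spectral sequence. The paper's proof is two sentences and does not spell out the surjectivity issue at all.

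You are right to flag surjectivity of $\Br_1 X/\Br_0 X \to H^1(G_k,\Pic\Xsep)$ as the only nontrivial bookkeeping, but the three justifications you offer are all off. The obstruction is the edge map into $H^3(G_k,\ksep{}^\times)$, and this group is simply zero because $k$ is a number field (a standard consequence of global class field theory; see e.g.\ Neukirch--Schmidt--Wingberg, \emph{Cohomology of Number Fields}, 8.3.11(iv)). No hypothesis on local points of $X$ is needed or assumed in the lemma, and surjectivity of $\Pic X \to (\Pic\Xsep)^{G_k}$ is irrelevant here (that map sits earlier in the sequence and controls injectivity of $\Br k \to \Br X$, not the differential out of $H^1(G_k,\Pic\Xsep)$). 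Once you replace your final paragraph with the single sentence ``$H^3(G_k,\ksep{}^\times)=0$ since $k$ is a number field,'' the argument is complete and matches the paper.
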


	\begin{proof} 
		The first claim is an immediate corollary of Lemma \ref{lem: BrXBar}. Lemma \ref{lem: fxnsonXbar} shows that $\kbar[X]^\times = \kbar^\times$, hence we may apply the Hochshild-Serre spectral sequence, whose long exact sequence of low degree terms yields the desired isomorphism. 
	\end{proof}

\subsection{Galois Module Structure}
Let $K$ be the splitting field of the polynomial $P(t)$, with Galois group $\Gal(K/k) = G$. Consider the inflation restriction sequence 
	\[
		\begin{tikzcd}
			0 \to H^1(\Gal(K/k), \Pic(\Xbar)^{G_K}) \arrow{r}{\inf} & H^1(k, \Pic(\Xbar)) \arrow{r}{\res}& H^1(K, \Pic(\Xbar)).
		\end{tikzcd}
	\]

	Since $\Pic(\Xbar)$ is a torsion-free $G_K$-module with trivial Galois action, $H^1(K, \Pic(\Xbar)) = \Hom_{cts}(G_K, \Pic(\Xbar)) = 0$. Thus, we obtain an isomorphism
	\[
		H^1(\Gal(K/k), \Pic(X_K)) \cong H^1(k, \Pic(\Xbar))
	\]

	\begin{lemma} \label{lem: PicXbar} The following sequence of $G$-modules is exact:
	\begin{equation}\label{eq: GalKSES}
		\begin{tikzcd}
			0 \ar{r} & \calO(U_K)^\times/ K^\times \ar{r}{\divv} & \Div_{X_K \setminus U_K}(X_K) \ar{r} & \Pic(X_K) \ar{r} & 0
		\end{tikzcd}
	\end{equation}
	\end{lemma}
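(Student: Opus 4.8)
The plan is to deduce \eqref{eq: GalKSES} from the standard localization exact sequence for divisor class groups attached to the open immersion $U_K\hookrightarrow X_K$. Since $X$ is smooth and geometrically integral over $k$, the scheme $X_K$ is regular and integral, so $\Cl(X_K)=\Pic(X_K)$, and likewise for $U_K$; and for any open subscheme $U$ of a regular integral Noetherian scheme $X$, with reduced complement $Z$, one has the exact sequence
\[
  0\To \calO(X)^\times \To \calO(U)^\times \xrightarrow{\ \divv\ } \Div_Z(X) \To \Pic(X) \To \Pic(U) \To 0,
\]
with $\Div_Z(X)$ the group of Weil divisors supported on $Z$. (This is a standard consequence of the theory of Weil divisors on normal schemes: a divisor supported on $Z$ is principal exactly when it is $\divv(f)$ for some $f\in\calO(U)^\times$, and $\divv(f)=0$ on $X$ forces $f\in\calO(X)^\times$ by normality.) I will apply this with $X=X_K$, $U=U_K$, and $Z=X_K\setminus U_K$. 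As $U=\pi^{-1}(U_0)$ with $U_0$ and $\pi$ defined over $k$, the set $Z$ descends to $k$, and every map in the sequence is functorial and hence $\Gal(K/k)$-equivariant (in particular $\Div_{X_K\setminus U_K}(X_K)$ is the permutation $G$-module on the irreducible components of the fibres $\pi^{-1}(e_i)$). It thus remains only to prove $\calO(X_K)^\times=K^\times$ and $\Pic(U_K)=0$, after which the six-term sequence collapses to \eqref{eq: GalKSES}.

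To see $\calO(X_K)^\times=K^\times$: the inclusion $K^\times\subseteq\calO(X_K)^\times$ is clear, while pullback along the dominant morphism $X_{\kbar}\to X_K$ embeds $\calO(X_K)^\times$ into $\calO(X_{\kbar})^\times=\kbar^\times$ by Lemma \ref{lem: fxnsonXbar}. Hence any $f\in\calO(X_K)^\times$ maps to a constant in $\kbar^\times$ and, being also an element of $K(X_K)$, is algebraic over $K$; since $K$ is algebraically closed in $K(X_K)$ by geometric integrality of $X$, we get $f\in K^\times$.

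To see $\Pic(U_K)=0$: the paper has already established $\Pic(\Ubar)=0$, so the Hochschild--Serre spectral sequence for $\G_m$ on $U_K$ gives $\Pic(U_K)\isom H^1(G_K,\calO(\Ubar)^\times)$. From the isomorphism $\Ubar\isom\overline{U_0}\times\G_{m,\kbar}$ one has a short exact sequence of $G_K$-modules $0\to\kbar^\times\to\calO(\Ubar)^\times\to Q\to 0$, where $Q$ is free abelian on the classes of $t-e_1,\dots,t-e_n,u_1$, with $G_K$ fixing each $[t-e_i]$ (because $e_i\in K$) and acting on $[u_1]$ through $\Gamma:=\Gal(K(\sqrt a)/K)$ via $[u_1]\mapsto[u_2]=\sum_i[t-e_i]-[u_1]$. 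By Hilbert 90 the $\kbar^\times$-term contributes nothing to $H^1$, and since the $G_K$-action on the finitely generated free module $Q$ factors through the finite quotient $\Gamma$, inflation--restriction reduces us to $H^1(\Gamma,Q)$. This group vanishes: trivially when $\Gamma$ is trivial, and otherwise by the one-line check that, for the generator $\sigma$ of $\Gamma\isom\Z/2$, both $\ker(1+\sigma)$ and $\im(\sigma-1)$ inside $Q$ equal $\Z\cdot\bigl(2[u_1]-\sum_i[t-e_i]\bigr)$. Therefore $\Pic(U_K)=0$.

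The one genuinely non-formal ingredient is $\Pic(U_K)=0$ --- more precisely the identification $\Pic(U_K)\isom H^1(G_K,\calO(\Ubar)^\times)$ together with the resulting (short) lattice cohomology computation; I expect this to be the step that needs the most care. The remaining ingredients --- the localization sequence, its $\Gal(K/k)$-equivariance, and $\calO(X_K)^\times=K^\times$ --- are routine.
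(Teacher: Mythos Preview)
Your proof is correct and follows essentially the same strategy as the paper: both derive \eqref{eq: GalKSES} from the standard localization sequence for the open immersion $U\hookrightarrow X$, trimmed at the ends by the vanishing of $\calO(X)^\times/(\textup{constants})$ and of $\Pic(U)$.

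The execution differs slightly. The paper establishes the short exact sequence first over $\kbar$---where the two endpoint vanishings are already recorded as Lemma~\ref{lem: fxnsonXbar} and the remark $\Pic(\Ubar)=0$---and then asserts in one line that the same sequence holds over $K$ (invoking $\Pic X_K=\Pic\Xbar$). You instead work directly over $K$ and supply independent proofs of $\calO(X_K)^\times=K^\times$ and $\Pic(U_K)=0$, the latter via Hochschild--Serre and an explicit $H^1(\Gamma,Q)$ computation. Your route is more self-contained and, notably, your $\Pic(U_K)=0$ argument does not require $\sqrt{a}\in K$; the paper's terse descent is really leaning on the hypothesis $L\subset K$ (so that all the $D_{i,j}$ and $u_i$ are already $K$-rational), which is the standing assumption in the applications that follow. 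Under that assumption your $\Gamma$ is trivial and the lattice computation collapses, so the extra generality costs nothing in the cases the paper cares about.
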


	\begin{proof}
		The sequence of $G_k$-modules
		\begin{equation}\label{eq: Gkexact}
			\begin{tikzcd}[column sep = 1.5 em]
				\kbar[X]^\times/\kbar^\times \arrow{r} & \kbar[U]^\times/\kbar^\times \arrow{r} & \DivXminusU(\Xbar) \arrow{r} & \Pic(\Xbar) \arrow{r} & \Pic(\Ubar)
			\end{tikzcd}
		\end{equation}
		is exact. Lemma \ref{lem: fxnsonXbar} shows that $\kbar[X]^\times/ \kbar^\times$ is trivial and as noted above, $\Pic(\Ubar) = 0$, hence we have
		\begin{equation}\label{eq: GkSES}
			\begin{tikzcd}
				0 \arrow{r} & \kbar[U]^\times/\kbar^\times \ar{r}{\divv} & \DivXminusU(\Xbar) \arrow{r} & \Pic(\Xbar) \arrow{r} & 0
			\end{tikzcd}
		\end{equation}
		In fact, (\ref{eq: GkSES}) holds over $K$, and since $\Pic X_K = \Pic \Xbar$, we obtain the exact sequence (\ref{eq: Gkexact}).
	\end{proof}

	We will thus make use of (\ref{eq: GalKSES}) to compute the structure of $\Pic \Xbar$ as a Galois module. The abelian group $K[U]^\times/K^\times$ is free of rank $n+1$, and is generated by the classes of the functions $t-e_1, \dots, t-e_n, u_1, u_2$ with the following relation, which arises from the defining equation for $X_K$:
	\begin{equation}\label{eqn: KUrelation}
		\sum_{i=1}^n [t-e_i] = [u_1] + [u_2]
	\end{equation}

	We consider the homomorphism $\divv: K[U]^\times/K^\times \to \Div_{X_K \setminus U_K}(X_K)$ sending a function to its divisor. Then, we have
	\vspace{-1 em}
	\begin{align*}
		&\divv(t- e_j) = D_{1,j} + D_{2,j} \\
		%&\divv(u_1) = \displaystyle \sum_{j=1}^n D_{1,j} \\
		&\divv(u_i) = \displaystyle \sum_{j=1}^n D_{i,j}
	\end{align*}

	Thus, since the sequence (\ref{eq: GalKSES}) is exact, it follows that $\Pic(X_K)$ is a free $\Z$-module of rank $n-1$, generated by $[D_{i,j}]$ with the relations
	\begin{eqnarray*}
		&\sum_{j=1}^n [D_{1,j}] = 0 & { } \\
		&{[D_{1,j}]} + {[D_{2,j}]} = 0, & j=1,\dots, n.
	\end{eqnarray*}

	Finally, we fix the notation $k_t := k[t]/(P(t))$, and $L := k(\sqrt{a})$. Then the $G$-modules exhibit the structure
	\begin{eqnarray}
	\label{Div} \Div_{X_K \setminus U_K}(X_K) \isom \Z[k_t/k] \otimes \Z[L/k]
	\end{eqnarray}
	Moreover, we have the exact sequence 
	\begin{equation} \label{eqn: KUGalModule}
		\begin{tikzcd}
		0 \ar{r} & \Z \ar{r} & \Z[k_t/k] \oplus \Z[L/k] \ar{r} & K[U]^\times/ K^\times \ar{r} & 0
		\end{tikzcd}
	\end{equation}
	arising from the relation (\ref{eqn: KUrelation}).

Finally, we record some results from group cohomology.

\begin{lemma}[\cite{BrownKS}] \label{prop: indresstructure}
	Let $N$ be a $G$-module. Then, 
	$$\Ind_H^G \Res_H^G N \isom \Z[G/H] \otimes N,$$
	where $G$ acts diagonally on the tensor product.
	\end{lemma}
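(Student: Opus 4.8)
The plan is to realize both sides concretely and write down an explicit isomorphism. Since $G = \Gal(K/k)$ is finite (and in any case $H$ has finite index), we may take $\Ind_H^G \Res_H^G N = \Z[G] \otimes_{\Z[H]} N$, with $G$ acting through the left factor. I would then define
\[
\phi \colon \Z[G] \otimes_{\Z[H]} N \To \Z[G/H] \otimes_\Z N, \qquad g \otimes n \mapsto gH \otimes gn,
\]
where on the target $G$ acts diagonally, $g' \cdot (xH \otimes m) = g'xH \otimes g'm$. The first point to check is that $\phi$ is well defined, i.e.\ that it respects the balancing relation over $\Z[H]$: for $h \in H$ one computes $\phi(gh \otimes n) = ghH \otimes (gh)n = gH \otimes g(hn) = \phi(g \otimes hn)$, using $ghH = gH$. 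Granting this, $\phi$ extends $\Z$-linearly to a homomorphism of abelian groups.

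Next I would verify $G$-equivariance: for $g' \in G$,
\[
\phi\bigl(g' \cdot (g \otimes n)\bigr) = \phi(g'g \otimes n) = g'gH \otimes (g'g)n = g' \cdot (gH \otimes gn) = g' \cdot \phi(g \otimes n).
\]
To see that $\phi$ is bijective, fix coset representatives $g_1, \dots, g_r$ for $G/H$. Then $\{g_i\}$ is a $\Z[H]$-basis of $\Z[G]$, so $\Z[G] \otimes_{\Z[H]} N = \bigoplus_i g_i \otimes N$ and every element is uniquely of the form $\sum_i g_i \otimes n_i$; meanwhile $\{g_iH\}$ is a $\Z$-basis of $\Z[G/H]$. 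I would define $\psi \colon \Z[G/H] \otimes_\Z N \to \Z[G] \otimes_{\Z[H]} N$ by $\psi(g_iH \otimes n) = g_i \otimes g_i^{-1} n$, extended $\Z$-bilinearly (well defined since the $g_iH$ form a basis, so there are no relations to respect), and then check on generators that $\phi\psi(g_iH \otimes n) = \phi(g_i \otimes g_i^{-1}n) = g_iH \otimes n$ and $\psi\phi(g_i \otimes n) = \psi(g_iH \otimes g_i n) = g_i \otimes n$. Hence $\phi$ and $\psi$ are mutually inverse, and $\phi$ is an isomorphism of $G$-modules.

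There is no serious obstacle here: the statement is standard and is the cited result from \cite{BrownKS}. The only step that genuinely requires attention is the well-definedness of $\phi$ with respect to the tensor product over $\Z[H]$, and that check is exactly what forces the diagonal $G$-action on $\Z[G/H] \otimes_\Z N$ rather than the action on the left factor alone.
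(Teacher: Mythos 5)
Your proof is correct, and it is the standard argument: the paper itself gives no proof of this lemma, citing Brown's book instead, so there is nothing to diverge from. The explicit mutually inverse maps $g \otimes n \mapsto gH \otimes gn$ and $g_iH \otimes n \mapsto g_i \otimes g_i^{-1}n$ are exactly the usual ones, and you correctly identify the only delicate points: the $\Z[H]$-balancing check for $\phi$ (which is what forces the diagonal action on the target) and the use of the coset representatives as a free $\Z[H]$-basis of $\Z[G]$ to see bijectivity, with no need to verify $G$-equivariance of $\psi$ separately since it is the inverse of an equivariant isomorphism.
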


\begin{lemma} \label{claim: induced} Let $H$ be a normal subgroup of a finite group $G$ and let $H'$ be the complementary subgroup, so that $H \cap H' = \{1\}$. Then the module $M := \Z[G/H'] \otimes \Z[G/H]$ has trivial cohomology $H^i(G,M)$ for $i > 0$. 
	\end{lemma}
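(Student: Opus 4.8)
The plan is to reduce the statement to Shapiro's lemma together with the vanishing of the higher cohomology of a regular representation. Write $G = H \rtimes H'$, which is what ``complementary subgroup'' means: $H \trianglelefteq G$, $H \cap H' = \{1\}$, and (since then $|HH'| = |H|\,|H'| = |G|$) $HH' = G$. The key point to extract is a clean description of $\Res_{H}^{G} \Z[G/H']$ as an $H$-module.

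First I would apply Lemma \ref{prop: indresstructure} with $N = \Z[G/H']$ to obtain
\[
	M = \Z[G/H] \otimes \Z[G/H'] \isom \Ind_{H}^{G} \Res_{H}^{G} \Z[G/H'].
\]
Since $G$ is finite, $\Ind_{H}^{G}$ and $\Coind_{H}^{G}$ coincide, so Shapiro's lemma yields $H^{i}(G, M) \isom H^{i}\big(H, \Res_{H}^{G}\Z[G/H']\big)$ for all $i$. Next I would identify this restricted module: because every $g \in G$ factors uniquely as $g = h h'$ with $h \in H$ and $h' \in H'$, the map $H \to G/H'$, $h \mapsto h H'$, is a bijection, and it is manifestly equivariant for the left-translation action of $H$ on itself and on $G/H'$. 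Hence $\Res_{H}^{G}\Z[G/H'] \isom \Z[H]$, the left regular $\Z[H]$-module. Finally, $\Z[H] = \Ind_{\{1\}}^{H}\Z$ is induced from the trivial subgroup, so Shapiro's lemma once more gives $H^{i}(H, \Z[H]) \isom H^{i}(\{1\}, \Z) = 0$ for $i > 0$. Chaining the isomorphisms gives $H^{i}(G, M) = 0$ for $i > 0$, as claimed. (Symmetrically, one could instead apply Lemma \ref{prop: indresstructure} with $N = \Z[G/H]$ over the subgroup $H'$; the argument is identical.)

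There is no hard computation here; the only place where care is needed is the middle step, namely checking that the natural bijection $H \leftrightarrow G/H'$ is genuinely $H$-equivariant and hence realizes $\Res_{H}^{G}\Z[G/H']$ as the regular representation — this is exactly where the hypotheses ``$H$ normal'' and ``$H'$ complementary'' are used, and it is the crux that makes the double application of Shapiro's lemma go through.
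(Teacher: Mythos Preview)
The paper states this lemma without proof, so there is no argument to compare against. Your proof is correct: Lemma~\ref{prop: indresstructure} identifies $M$ with $\Ind_H^G \Res_H^G \Z[G/H']$, Shapiro's lemma reduces the computation to $H^i\big(H, \Res_H^G \Z[G/H']\big)$, and the complementarity of $H'$ gives the $H$-equivariant bijection $H \to G/H'$, $h \mapsto hH'$, so that $\Res_H^G \Z[G/H'] \isom \Z[H]$ has vanishing higher cohomology.

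One small remark: contrary to your closing sentence, the normality of $H$ is not actually used anywhere. The bijection $H \to G/H'$ and its $H$-equivariance require only $HH' = G$ and $H \cap H' = \{1\}$; Lemma~\ref{prop: indresstructure} and Shapiro's lemma hold for arbitrary subgroups. So your argument in fact establishes the lemma without the normality hypothesis on $H$.
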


\subsection{Brauer Groups} \label{sec: BrX_isomtype}
We are now ready to state the extended version of Theorem \ref{thm: short version degree 4}.

\begin{theorem} \label{thm: extended brauer deg 4}
	Let $P(t)$ be a separable quartic polynomial over a number field $k$ that is irreducible and has splitting field $K$, with Galois group $G$. Let $X \subset \A_k^3$ be an affine Ch\^atelet surface with defining equation $x^2 - ay^2 = c P(t)$, with $a, c \in k^\times$, and $a$ not a square in $k$. Let $L = k(\sqrt{a})$, and let $k_t := k[t]/(P(t))$. If $L$ is contained in $K$ then the Brauer group of $X$ modulo constant algebras is
	\[ \Br X/ \Br k \isom \begin{cases} \Z/4\Z, & \textup{ if } G \isom D_4 \textup{ and } \Gal(K/L) \isom \Z/4\Z \\
	\Z/2\Z, & \textup{ if } G \isom (\Z/2\Z)^2 \\
	& \textup{ or } G \isom D_4 \textup{ and } L \not \subset k_t \\
	0, & \textup{ otherwise } 
	\end{cases} 
	\]
	\end{theorem}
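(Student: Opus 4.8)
The plan is to compute $H^1(G, \Pic X_K)$ using the exact sequence (\ref{eq: GalKSES}), together with the companion sequences (\ref{eqn: KUGalModule}) and (\ref{Div}), and then to identify the group in each case via the classification of quartic Galois groups $G \subset S_4$. By Lemma \ref{BrXAlg} we have $\Br X/\Br_0 X \isom H^1(G_k, \Pic \Xbar) \isom H^1(G, \Pic X_K)$, so everything reduces to a finite group-cohomology computation once the $G$-module structure of the terms in (\ref{eq: GalKSES}) is pinned down. The module $\Div_{X_K \setminus U_K}(X_K) \isom \Z[k_t/k] \otimes \Z[L/k]$ sits (for $P$ irreducible) as $\Ind_{H}^G \Z \otimes \Z[L/k]$ where $H = \Gal(K/k_t)$; when $L \subset K$ we have $\Z[L/k] = \Z[G/G_L]$ with $G_L = \Gal(K/L)$ of index $2$, so by Lemma \ref{prop: indresstructure} this is $\Ind_{G_L}^G \Res_{G_L}^G \Z[k_t/k]$, which has trivial higher cohomology exactly when $G_L$ is a complement to $H$ (Lemma \ref{claim: induced}) — this is the source of the vanishing in the generic subcase. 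More carefully, one breaks (\ref{eq: GalKSES}) into the long exact cohomology sequence
\begin{equation*}
H^1(G, \Div_{X_K\setminus U_K}) \to H^1(G, \Pic X_K) \to H^2(G, \calO(U_K)^\times/K^\times) \to H^2(G, \Div_{X_K\setminus U_K}),
\end{equation*}
and uses (\ref{eqn: KUGalModule}) to replace $H^\bullet(G, \calO(U_K)^\times/K^\times)$ by the cohomology of $\Z[k_t/k] \oplus \Z[L/k]$ and of $\Z$ via the connecting maps. So the whole computation is governed by the cohomology of permutation modules $\Z[G/H]$ and $\Z[G/G_L]$ and of $\Z$, all of which are standard.

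First I would enumerate the transitive subgroups of $S_4$ that can occur as $G = \Gal(K/k)$ for an irreducible quartic: $\Z/4\Z$, $(\Z/2\Z)^2$ (the Klein four subgroup $V$ acting regularly), $D_4$, $A_4$, $S_4$. The hypothesis $L = k(\sqrt a) \subset K$ forces $[K:k]$ even and $G$ to have a subgroup of index $2$, which excludes $A_4$; I would then check that for $G \isom S_4$ or $G \isom \Z/4\Z$ (where $H$ is already the full point stabilizer or trivial, respectively, and the relevant complement structure is forced) the group $H^1$ vanishes, landing in the ``otherwise'' case. For $G \isom (\Z/2\Z)^2$ acting regularly, $H = \Gal(K/k_t)$ is trivial, so $\Div_{X_K\setminus U_K} \isom \Z[G] \otimes \Z[L/k]$ is induced (hence cohomologically trivial), and one computes that $H^1(G, \Pic X_K) \isom \Z/2\Z$ directly from the connecting homomorphism in (\ref{eqn: KUGalModule}). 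The most delicate case is $G \isom D_4$: here one must distinguish whether $\Gal(K/L) \isom \Z/4\Z$ or $\isom (\Z/2\Z)^2$, and whether $L \subset k_t$, i.e. whether the index-$2$ subgroup $G_L$ contains the point stabilizer $H$ (here $H$ is a non-normal order-$2$ subgroup). When $G_L \isom \Z/4\Z$ it is the unique cyclic index-$2$ subgroup and is \emph{not} a complement to $H$, so Lemma \ref{claim: induced} does not apply and the cohomology of $\Div_{X_K\setminus U_K}$ is nonzero — one computes $H^1(G,\Pic X_K) \isom \Z/4\Z$, detecting the non-cyclic Brauer class. When $G_L$ is one of the two Klein-four index-$2$ subgroups, it is a complement to $H$ precisely when $G_L \not\supset H$, i.e. when $L \not\subset k_t$, giving $\Z/2\Z$; and when $L \subset k_t$ one gets $0$.

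The main obstacle I anticipate is the bookkeeping in the $D_4$ case: one has to keep straight the lattice of subgroups of $D_4$ (the center, the two Klein-four subgroups, the cyclic $\Z/4\Z$, and the four non-central involutions), match each arithmetic condition ($L \subset K$, $\Gal(K/L) \isom \Z/4\Z$ vs. $(\Z/2\Z)^2$, $L \subset k_t$ vs. not) with a conjugacy class of subgroups, and then carry out the connecting-map computation in the long exact sequences of (\ref{eq: GalKSES}) and (\ref{eqn: KUGalModule}) for each of these configurations. A clean way to organize this is to compute $H^1(G,\Pic X_K)$ as $H^2$ of the two-step complex $[\Z \to \Z[k_t/k]\oplus\Z[L/k] \to \Div_{X_K\setminus U_K}]$ (splicing (\ref{eqn: KUGalModule}) and (\ref{eq: GalKSES})), reducing everything to Tate cohomology of permutation modules, where one can use Shapiro's lemma ($H^i(G,\Z[G/H]) \isom H^i(H,\Z)$) to make each term explicit. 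I would also verify the boundary interactions carefully — in particular that the relation (\ref{eqn: KUrelation}) contributes the expected $\Z/2\Z$ in the Klein-four and split-$D_4$ cases — since a sign or index error there would change $\Z/2$ to $0$ or vice versa.
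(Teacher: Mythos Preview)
Your overall framework matches the paper's: reduce to $H^1(G,\Pic X_K)$ via Lemma~\ref{BrXAlg}, feed the exact sequences (\ref{eq: GalKSES}) and (\ref{eqn: KUGalModule}) into long exact sequences, and compute the cohomology of the permutation modules $\Z[k_t/k]$, $\Z[L/k]$, $\Z$ via Shapiro. The case enumeration and the exclusion of $A_4$ are also as in the paper.

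However, there is a concrete error in your analysis of the $D_4$ case with $\Gal(K/L)\isom\Z/4\Z$. You claim that $G_L\isom\Z/4\Z$ is \emph{not} a complement to the point stabilizer $H=\Gal(K/k_t)$, and that therefore $\Div_{X_K\setminus U_K}(X_K)$ has nonzero cohomology, which you then say produces the $\Z/4\Z$. This is backwards. In $D_4$, the point stabilizer $H$ is a non-normal order-$2$ reflection subgroup, while $G_L=\langle g\rangle$ is the rotation subgroup; they intersect trivially, so $G_L$ \emph{is} a complement to $H$. Hence Lemma~\ref{claim: induced} applies (with the normal subgroup being $G_L$), and $H^i(G,\Div_{X_K\setminus U_K}(X_K))=0$ for $i>0$. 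The $\Z/4\Z$ therefore comes entirely from the isomorphism $H^1(G,\Pic X_K)\isom H^2(G,K[U]^\times/K^\times)$, and the latter group is computed from the sequence (\ref{eqn: KUGalModule}) to be $\Z/4\Z$.

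Conversely, in the case $L\subset k_t$ (your ``gets $0$'' case), $G_L$ \emph{does} contain $H$, so they are not complements; here $H^2(G,\Div_{X_K\setminus U_K}(X_K))\isom(\Z/2\Z)^2$ is nonzero, and it is precisely this nonvanishing that absorbs $H^2(G,K[U]^\times/K^\times)\isom(\Z/2\Z)^2$ to force $H^1(G,\Pic X_K)=0$. So the heuristic ``trivial $\Div$-cohomology $\Rightarrow$ trivial Brauer contribution'' that you are implicitly using is inverted: vanishing of the $\Div$-cohomology is what allows $H^2(G,K[U]^\times/K^\times)$ to survive into $H^1(G,\Pic X_K)$, and nonvanishing is what can kill it. Once you correct this, your plan goes through and coincides with the paper's proof.
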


\begin{rmk}
	As the techniques of this section will illustrate, the isomorphism types of Brauer groups of affine Ch\^atelet surface with other Galois actions can be computed in a similar manner. Moreover, we observe that if $P(t) = \prod_{i=1}^r P_i(t)$ is not irreducible, then $\Br X/ \Br_0 X$ contains (and is generated by) the nontrivial quaternion algebras $\big(-a, P_i(t)\big) \in \Br \kk(X)$. See \cite{Gun13}, \cite{DW16}, for example. Moreover similar techniques can be used to check that in the theorem above, when $L$ is not contained in $K$, $\Br X/ \Br k$ is $\Z/2\Z$ if $G$ is abelian, and otherwise is trivial.
\end{rmk}

\begin{proof}
	We observe that if $G \cong A_4$, no quadratic subfield of $K$ exists ($A_4$ has no subgroups of index 2), hence we may exclude this case. We will individually treat the following remaining cases arising from the possibilities for the Galois group $G$ of an irreducible quartic polynomial:
	\begin{enumerate}
		\item $G \isom \Z/4\Z$ 
		\item $G \isom \Z/2\Z \times \Z/2\Z$
		\item $G \isom D_4$ and $L \subset k_t$
		\item $G \isom D_4$, $L \not \subset k_t$, and $\Gal(K/L) \isom \Z/2\Z \times \Z/2\Z$
		\item $G \isom D_4$ and $\Gal(K/L) \isom \Z/4\Z$
		\item $G \isom S_4$
	\end{enumerate}

	\noindent {\bf Case (1).} When $G \isom \Z/4\Z$, $k_t \isom K$, and we have that $\Div_{X_K \setminus U_K}(X_K) \isom \Z[K/k] \otimes \Z[L/k]$, hence $H^i(G,\Div_{X_K \setminus U_K}(X_K)) = 0$ for all $i > 0$. Thus by the exact sequence (\ref{eq: GalKSES}), \[ H^1(G, \Pic X_K) \isom H^2(G, K[U]^\times/ K^\times)\]
	From the long exact sequence in cohomology corresponding to (\ref{eqn: KUGalModule}), we have 
\begin{equation} \small \label{LES: Brauer}
		\begin{tikzcd}[row sep = .5 em, column sep = 1 em]
		H^2(K/k,\Z) \ar{r} & H^2(K/k, \Z[K/k] \oplus \Z[L/k]) \ar{r} \ar[d, phantom,  ""{coordinate, name=Z}] & H^2(K/k, K[U]^\times/ K^\times) \arrow[dll, rounded corners, to path={ -- ([xshift=2ex]\tikztostart.east)
		|- (Z) [near end]\tikztonodes
		-| ([xshift=-2ex]\tikztotarget.west)
		-- (\tikztotarget)}]\\
		H^3(K/k,\Z) \ar{r} & H^3(K/k, \Z[K/k] \oplus \Z[L/k]) 
		\end{tikzcd}
	\end{equation}

	Then $H^2(K/k, \Z[K/k]) = 0$, since this is an induced module, and by Shapiro's lemma we have $H^2(K/k, \Z[L/k]) \isom H^2(K/L, \Z)$. Thus, the first map is the natural surjective map $H^2(K/k,\Z) \to H^2(K/L, \Z)$. Similarly, we have that $H^3(K/k, \Z[K/k]) = 0$, and since $K/L$ is cyclic, $H^3(K/L, \Z) =0$. Thus, the third map in the exact sequence above is an isomorphism. But since $K/k$ is cyclic, this gives that $H^2(K/k,K[U]^\times/K^\times) \isom H^3(K/k,\Z) = 0$. Thus $\Br X = \Br_0 X$. \bigskip

	\noindent {\bf Case (2).} As in Case 1, when $G \isom \Z/2\Z \times \Z/2\Z$, we find $\Div_{X_K \setminus U_K}(X_K) \isom \Z[K/k] \otimes \Z[L/k]$, so that $H^1(K/k, \Pic(X_K)) \isom H^2(K/k, K[U]^\times/K^\times)$. The sequence (\ref{LES: Brauer}) holds in this setting as well, and again we observe that the first map is the natural surjection $H^2(K/k, \Z) \to H^2(K/L, \Z)$, and that the third map is an isomorphism. Thus, $H^2(K/k,K[U]^\times/K^\times) \isom H^3(K/k,\Z) = \Z/2\Z$. Thus, $\Br X/\Br k \isom \Z/2\Z$. \bigskip

	For the remaining isomorphism types, the $G$-module structure (\ref{Div}) gives $\Div_{X_K \setminus U_K}(X_K) \isom \Z[k_t/k] \otimes \Z[L/k]$. However, we observe that now $k_t \not \isom K$, thus a priori we may only conclude that $ H^1(K/k, \Div_{X_K \setminus U_K}(X_K)) =0,$ since this is a permutation module. Additionally, the long exact sequence in cohomology of (\ref{eqn: KUGalModule})
	simplifies to
	\begin{equation} \small \label{LES: simplifiedS4}
		\begin{tikzcd}[column sep = 1 em, row sep = .5 em]
		H^2(K/k,\Z) \ar{r} & H^2(K/k_t, \Z) \oplus H^2(K/L, \Z) \ar{r} \ar[d, phantom,  ""{coordinate, name=Z}] & H^2(K/k, K[U]^\times/ K^\times) \arrow[dll, rounded corners, to path={ -- ([xshift=2ex]\tikztostart.east)
		|- (Z) [near end]\tikztonodes
		-| ([xshift=-2ex]\tikztotarget.west)
		-- (\tikztotarget)}]\\
		H^3(K/k,\Z) \ar{r} & H^3(K/k_t, \Z) \oplus H^3(K/L, \Z)
		\end{tikzcd}
	\end{equation}

	\noindent {\bf Case (3).} When $L$ is contained in $k_t$, $\Div_{X_K \setminus U_K}(X_K)$ is not an induced module. However, Prop \ref{prop: indresstructure} can be used to compute the cohomology, and we find that $H^2(G,\Div_{X_K \setminus U_K}(X_K)) \isom \Z/2\Z \oplus \Z/2\Z$. Additionally, we compute that in that case $H^2(G, K[U]^\times/K^\times) \isom \Z/2\Z \oplus \Z/2\Z$, and thus, $H^1(G, \Pic X_K) = 0$. \bigskip

	\noindent {\bf Case (4).} When $L \not \subset k_t$ and $\Gal(K/L) \isom \Z/2\Z \times \Z/2\Z$, sequence (\ref{LES: simplifiedS4}) yields
	\[
		\begin{tikzcd}[column sep = 1 em, row sep = .75 em]
		\Z/2\Z \oplus \Z/2\Z \ar{r} & \Z/2\Z \oplus (\Z/2\Z \oplus \Z/2\Z) \ar{r} &
		H^2(G,K[U]^\times/K^\times) \ar{r} & \Z/2\Z \ar{r} & \Z/2\Z 
		\end{tikzcd}
	\]
	and we obtain $H^2(G,K[U]^\times/K^\times) \isom \Z/2\Z$. \bigskip

	\noindent {\bf Case (5).} When $\Gal(K/L) \isom \Z/4\Z$, we claim that $\Div_{X_K \setminus U_K}(X_K)$ is an induced $G$-module, hence $H^i(G, \Div_{X_K \setminus U_K}(X_K)) =0 \textup{ for } i>0.$ To see this, recall that $H$ is the (cyclic) normal subgroup generated by $\sigma$ and let $H'$ be the complementary subgroup, so that $H \cap H' = \{1 \}$. Then $\Div_{X_K \setminus U_K}(X_K) \isom \Z[G/H'] \otimes \Z[G/H]$, and the claim follows immediately from Lemma \ref{claim: induced}.
	%The same proof demonstrates that $\Div_{X_K \setminus U_K} (X_K)$ is an induced module for $L = k(\sqrt{ \Delta})$.
	Since (\ref{eq: GalKSES}) is exact, this shows that $H^1(G, \Pic X_K) \isom H^2(G, K[U]^\times/ K^\times)$. Thus sequence (\ref{LES: simplifiedS4}) yields
	\[ \begin{tikzcd}[row sep = 0 em, column sep = 1.25 em]
		\Z/2\Z \oplus \Z/2\Z \ar{r} & \Z/2\Z \oplus \Z/4\Z \ar{r} & H^2(K/k, K[U]^\times/ K^\times) 
		\ar{r} & \Z/2\Z \ar{r} & 0
		\end{tikzcd}
	\]
	With the aid of \magma \,we find $H^2(K/k, K[U]^\times/ K^\times) \isom \Z/4\Z$. (See $\S3$).

	\smallskip

	\noindent {\bf Case (6).}  Finally, suppose that $G \isom S_4$. Then, sequence (\ref{LES: simplifiedS4}) gives
	\[
	\begin{tikzcd}[row sep = .75 em] \small
		H^2(S_4, \Z) \ar{r} & H^2(S_3, \Z)  \oplus H^2(A_4, \Z) \ar{r} &
		H^2(S_4,\Z) \ar{r} & H^2(S_3, \Z)  \oplus H^2(A_4, \Z) 
		\end{tikzcd}
	\]

	If we consider the inflation-restriction sequence for $S_4$ and the normal subgroup $\Z/2\Z \times \Z/2\Z$
	we find that $H^2(S_3, \Z) \isom H^2(S_4, \Z)$, hence the first map above is injective. This yields $H^2(G, K[U]^\times/K^\times) \isom \Z/3\Z$. Thus we have
	\[ \begin{tikzcd} 0 \ar{r} & H^1(G,\Pic(X_K)) \ar{r} & \Z/3\Z \ar{r} & H^2(G, \Div_{X_K \setminus U_K}(X_K))
	\end{tikzcd}
	\]

	Lemma \ref{prop: indresstructure} provides a description of $\Div_{X_K\setminus U_K}(X_K)$, and allows us to compute that $H^2(G, \Div_{X_K \setminus U_K}(X_K)) \isom \Z/3\Z$, and the third map above is an isomorphism. Hence we find 
	$H^1(G, \Pic X_K) = 0$.

	We note that these calculations can be carried out with the aid of \magma\, as well.
\end{proof}

\noindent Finally, we prove Theorem \ref{thm: short version general dihedral}.
\begin{proof}
		We first recall the low degree integral cohomology groups of $D_{2n}$. When $n$ is odd, these groups can be computed from the Lyndon-Hochschild-Serre spectral sequence:
		\[
		{H^i(D_{2n}, \Z) =
			\begin{cases}
			\Z, & \textup{ if } i = 0 \\
			\Z/2\Z, & \textup{ if } i \equiv 2 \pmod{4} \\
			\Z/2n \Z & \textup{ if } i \equiv 0 \pmod{4} \\
			0, & \textup{ if } i \textup{ is odd} 
			\end{cases}}
		\]

		and when $n$ is even (\cite{Handel}):
		\[ 
		{H^i(D_{2n}, \Z) =
			\begin{cases}
			\Z, & \textup{ if } i = 0 \\
			(\Z/2\Z)^{i-1/2}, &  \textup{ if } i \textup{ is odd} \\
			(\Z/2\Z)^{i+2/2}, & \textup{ if } i \equiv 2 \pmod{4} \\
			\end{cases}}
		\]

	We consider first the case of odd $n$. The proof of claim (\ref{claim: induced}) holds in this context. That is, $\Div_{X_K \setminus U_K}(X_K)$ is an induced module, hence $H^1(G, \Pic X_K) \isom H^2(G, K[U]^\times/ K^\times)$. 
	When $L \subset K$, the sequence (\ref{LES: simplifiedS4}) becomes:
	\[
	\begin{tikzcd}[row sep = 0 em]
		0 \ar{r} & \Z/2\Z \ar{r} & \Z/2\Z  \oplus \Z/n\Z \ar{r} & H^2(K/k, K[U]^\times/ K^\times)
		\ar{r} & 0  
		\end{tikzcd}
	\]
	and we obtain the desired isomorphism.

	When $n$ is even, the sequence (\ref{LES: simplifiedS4}) becomes:
	\[
	\begin{tikzcd}[row sep = 0 em, column sep = 1 em]
		\Z/2\Z \oplus \Z/2\Z \ar{r} & \Z/2\Z  \oplus \Z/n\Z \ar{r} & H^2(K/k, K[U]^\times/ K^\times)
		\ar{r} & \Z/2\Z \ar{r} & 0
		\end{tikzcd}
	\]
	Here, the Galois group $D_n \subset S_n$ is generated by the $n$-cycle $g = (1 \,, 2 \,, \dots \,, n)$ (which under an appropriate labeling permutes the roots of $P(t)$ cyclically and fixes the element $\sqrt{a}$), and the product of transpositions $h = \prod_{2 \le i \le n-2} (i \, , \, n - 2 + i)$. The action on divisor classes is given by \smallskip
	\begin{eqnarray*}
	{g \colon \begin{cases}
	{[D_{1,i}]}  & \mapsto  {[D_{1,i+1}]}, \, 1 \le i < n-1 \\
	{[D_{1,n-1}]} & \mapsto  {-([D_{1,1}] + \dots + [D_{1, n-1}])} \\
	 \end{cases}} &
	{h \colon \begin{cases} {[D_{1,1}]} & \mapsto  {-[D_{1,1}]} \\ 
	{[D_{1,2}]} & \mapsto  {([D_{1,1}] + \dots + [D_{1,n-1}])} \\ 
	{[D_{1,i}]} &\mapsto  {-[D_{1,n-2+i}]}, \,\,\, i \le 3 \le n-2. 
	\end{cases}}
	\end{eqnarray*}

	Thus, one obtains $H^2(G,\Pic X_K) \isom \Z/n\Z$.
	\end{proof}

%%%%%%%%%%%%%%%%%%%%%%%%%%%%%%%%%%%%%%%%%%%%%%%%%%%%%%%%%%%%%%%%%%%%%%%%%%%%%%%%
%%%%%%%%%%%%%%%%%%		      Brauer Classes    	    %%%%%%%%%%%%%%%%%%%%%%%%
%%%%%%%%%%%%%%%%%%%%%%%%%%%%%%%%%%%%%%%%%%%%%%%%%%%%%%%%%%%%%%%%%%%%%%%%%%%%%%%%
\section{Explicit representatives of Brauer classes} \label{section: explicit_classes}

In this section we explain the main steps involved in the construction of explicit representatives of a Brauer classes of affine Ch\^atelet surfaces. The first step is to obtain 1-cocycle representatives in $H^1(G,\Pic X_K)$. For groups $G$ which are not cyclic, we rely on so-called efficient resolutions in group cohomology.

\subsection{Efficent Resolutions} \label{subsec: efficres}

Let $G$ be a finite group and $A$ a $G$-module, with the action of $g \in G$ on $a \in A$ denoted by $g.a$. Recall that the standard resolution resolves the trivial $G$-module $\Z$ by the free $G$-modules $\Z[G^{r+1}]$ equipped with the diagonal action of $g \in G$ given by $g.(g_0, \dots, g_r) = (gg_0, \dots, gg_r)$. We denote this standard resolution by $SR$, and omit the augmentation map to $\Z$:
\[
	\begin{tikzcd}
		SR: \dots \ar{r}{\partial_3} & \Z[G^3] \ar{r}{\partial_2} \ar{r} & \Z[G^2] \ar{r}{\partial_1} & \Z[G]
	\end{tikzcd}
\]
The differential in $SR$ is defined by
$\partial_n(g_0, \dots, g_n) = \sum_{i=0}^n (-1)^i (g_0, \dots, \hat{g_i}, \dots, g_n)$, where $\hat{g_i}$ means omit the $i$-th coordinate.

In general, computations with the standard resolution can be intricate. Thus for computational purposes, alternative more efficient resolutions may be used. We denote the efficient resolution simply by $ER$. We will want to give a quasi-isomorphism of the standard complex with $ER$ by specifying morphisms of chain complexes
	\begin{eqnarray} \label{diagram: morphisms}
		\sigma = (\sigma_r)_{r = 0}^\infty: & SR \to ER \\
		\tau = (\tau_r)_{r=0}^\infty: & ER \to SR
	\end{eqnarray}
as well as a chain homotopy $h = (h_r)_{r = 1}^\infty$ satisfying $\id - \tau_r \circ \sigma_r = h_r \circ \delta_{r-1} + \delta_r \circ h_{r+1}$.
After applying the functor $\Hom_{\Z[G]}( - ,A)$, we obtain a quasi-isomorphism via maps of chain complexes $(\tau^r)_{r = 0}^\infty$, and $(\sigma^r)_{r =0}^\infty$ between $\Hom_{\Z[G]}(SR, A)$ and $\Hom_{\Z[G]}(ER, A)$ and vice versa, respectively. We will identify $\Hom_{\Z[G]}(\Z[G^{r+1}], A)$ with $\textup{Map}(G^r,A)$. 

\begin{lemma} \cite[Proposition 5]{KTdp2} \label{lem: KTdp2} 
	Let $G \isom D_n = \langle g,h \mid g^n = h^2 = ghgh = 1 \rangle$. Then there exists a projective resolution of $\Z$ 
		\begin{equation} \label{effresZGmod}
		\begin{tikzcd}
			\cdots \Z[G]^4 \ar{r}{\dd^3_n} & \Z[G]^3 \ar{r}{\dd^2_n} & \Z[G]^2 \ar{r}{\dd^1_n} & \Z[G] \ar{r}{\epsilon} & \Z
		\end{tikzcd} \end{equation}
	with
		\[
			\begin{tabular}{l l l}
				$\dd^3_n = \left( \begin{matrix} \Delta_g & 0 & 0 & N_h \\ 0 & \Delta_h & 0 & -N_g \\ 0 & 0 & \Delta_{gh}  & - N_g \end{matrix} \right),$ &
				$\dd^2_n = \left( \begin{matrix} N_g & 0 & N_{gh} \\ 0 & N_h & -N_{gh} \end{matrix} \right),$ & $\dd^1_n = \left( \begin{matrix} \Delta_g & \Delta_h \end{matrix} \right),$
			\end{tabular}
		\]
	where $N_{g_0} := 1 + g_0 + g_0^2 + \dots + g_0^{n-1}$, and $\Delta_{g_0} := 1 - g_0$ for any $g_0\in G$. Finally, $\epsilon(zg_0) = z$ for $z \in \Z$ and $g_0 \in G$. 
\end{lemma}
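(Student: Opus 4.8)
We must show Lemma~\ref{lem: KTdp2} — that the displayed complex (\ref{effresZGmod}) with differentials $\dd^1_n, \dd^2_n, \dd^3_n$ built from the norm and augmentation-type elements $N_{g_0}$, $\Delta_{g_0}$ is a projective resolution of the trivial module $\Z$ over $\Z[D_n]$. Since each term is a free (hence projective) $\Z[G]$-module, the content is exactness: we need $\dd^{i}_n \circ \dd^{i+1}_n = 0$ for all $i$, that $\epsilon$ is surjective with kernel the image of $\dd^1_n$, and that $\ker \dd^{i}_n = \im \dd^{i+1}_n$ in each intermediate degree.

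**Plan of proof.** First I would record the defining relations in $\Z[G]$ that make everything work: in the group ring of a cyclic group $\langle g_0 \rangle$ of order $m$ one has $\Delta_{g_0} N_{g_0} = N_{g_0}\Delta_{g_0} = 1 - g_0^m = 0$, and moreover $\ker(\Delta_{g_0}) = \im(N_{g_0})$ and $\ker(N_{g_0}) = \im(\Delta_{g_0})$ on $\Z[\langle g_0\rangle]$ (the classical periodicity-2 resolution for a cyclic group). For $D_n = \langle g, h \mid g^n = h^2 = ghgh = 1\rangle$ the three order-$\leq n$ cyclic subgroups $\langle g\rangle$, $\langle h\rangle$, $\langle gh\rangle$ play symmetric roles, and the relation $h g h = g^{-1}$ gives the ``twisting'' identities (e.g. $N_g h = h N_g$ since conjugation by $h$ permutes $1, g, \dots, g^{n-1}$) that are needed to verify the off-diagonal entries of $\dd^3_n$ interact correctly. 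The composition identities $\dd^1_n \dd^2_n = 0$ and $\dd^2_n \dd^3_n = 0$ then reduce to finitely many such monomial identities in $\Z[G]$ and can be checked entry-by-entry: for instance $\dd^1_n \dd^2_n = (\Delta_g N_g + \Delta_h\cdot 0,\ \Delta_g N_{gh} - \Delta_h N_{gh}) = (0, (\Delta_g - \Delta_h)N_{gh})$, and $(\Delta_g - \Delta_h) = h - g = -(g - h)$, while $(g-h)N_{gh} = gN_{gh} - hN_{gh}$; using $gN_{gh} = g(1 + gh + (gh)^2 + \cdots) $ and the dihedral relations one finds this vanishes. I would present these verifications compactly rather than exhaustively.

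**Exactness in the middle.** The real work is showing the complex has no homology in positive degrees. The cleanest route is the one already signalled in the surrounding text: rather than a bare-hands diagram chase, exhibit an explicit $\Z$-linear (not $\Z[G]$-linear) contracting homotopy $s_r$ with $\dd s + s \dd = \id$ on the augmented complex, or — more in the spirit of \cite{KTdp2} — construct the chain maps $\sigma\colon SR \to ER$ and $\tau\colon ER \to SR$ to and from the standard (bar) resolution together with the chain homotopy $h$ satisfying $\id - \tau\sigma = h\delta + \delta h$. Since the standard resolution is known to be a resolution of $\Z$, and $ER$ is a complex of projectives, producing a chain-homotopy equivalence $ER \simeq SR$ forces $ER$ to be exact as well. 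In practice one builds $\tau_r$ by sending the free generators of $\Z[G]^{k}$ in homological degree $r$ to explicit elements of $\Z[G^{r+1}]$ assembled from the $\Delta$'s and $N$'s (writing $N_{g_0} = \sum_{j} (1, g_0, g_0^2, \dots, g_0^{j})$-type telescoping sums in the bar complex), and $\sigma_r$ by the canonical projection; the homotopy $h$ is then forced degree by degree. I would state these formulas for $r \le 3$ (which is all that is used elsewhere in the paper, since only $H^1$ and $H^2$ with small-degree cocycles are needed) and indicate that they extend by the evident periodicity.

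**Main obstacle.** The bookkeeping in constructing $\tau$, $\sigma$, and $h$ explicitly is the delicate part: one must keep careful track of signs and of the non-commutativity ($hg = g^{-1}h$), and verify $\partial^{SR}\tau = \tau \dd_n$ and the homotopy identity on each of the (up to four) generators in each degree. Because this is essentially Proposition~5 of \cite{KTdp2}, the honest thing is to cite that result for the existence and correctness of the resolution and the transfer maps, and confine the proof here to: (i) recording the explicit matrices $\dd^i_n$ in our normalization, (ii) checking $\dd^i_n \dd^{i+1}_n = 0$ directly from the group-ring identities above, and (iii) noting that the periodicity of the $D_n$ resolution (period $4$ when $n$ is odd, and the $2$-periodic ``tail'' coming from the cyclic $\langle g\rangle$ part when $n$ is even, matching the cohomology tables quoted in the proof of Theorem~\ref{thm: short version general dihedral}) guarantees exactness in all higher degrees once it is verified in the low-degree range. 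That keeps the argument self-contained at the level of detail actually needed in \S\ref{subsec: efficres}.
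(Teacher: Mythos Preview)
The paper does not prove this lemma at all: it is stated with the citation \cite[Proposition~5]{KTdp2} and no proof environment follows. So your proposal already goes well beyond what the paper does; the honest route you mention at the end --- simply citing \cite{KTdp2} for the existence and exactness of the resolution, and recording the explicit matrices in the normalisation used here --- is precisely what the paper chooses.

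That said, your outline of an independent verification is essentially correct, with one technical slip worth flagging. The group ring $\Z[D_n]$ is noncommutative, so the order of the factors matters when you multiply the matrix entries. With the convention implicit in the paper (the $j$-th column of the displayed matrix records the image of the $j$-th free generator, and the maps are left-$\Z[G]$-linear), the composite $\dd^1_n\circ\dd^2_n$ applied to the third generator gives
\[
N_{gh}\,\Delta_g \;-\; N_{gh}\,\Delta_h \;=\; N_{gh}(\Delta_g-\Delta_h),
\]
not $(\Delta_g-\Delta_h)N_{gh}$ as you wrote. Since $(gh)^2=1$ in $D_n$ one has $N_{gh}=1+gh$ (a two-term sum, not an indefinite one), and then
\[
(1+gh)(h-g)\;=\;h-g+ghh-ghg\;=\;h-g+g-h\;=\;0,
\]
using $ghh=g$ and $ghg=h^{-1}=h$. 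By contrast $(\Delta_g-\Delta_h)N_{gh}=(h-g)(1+gh)=h+g^{-1}-g-g^{2}h$, which does not vanish for $n\ge 3$; so the order you wrote would not give a complex. Apart from this (and the omitted middle entry $\Delta_h N_h=0$), your plan --- check $\dd\,\dd=0$ from the cyclic identities $\Delta_{g_0}N_{g_0}=0$ together with the dihedral relation, then obtain exactness via a chain-homotopy equivalence with the bar resolution --- is sound and is the argument carried out in \cite{KTdp2}.
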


\begin{lemma} \label{lemma: quasi_isom} Let $G \isom D_n$, and let $SR$ and $ER$ be resolutions of $\Z[G]$ modules as above. Then there exists a quasi-isomorphism of the complexes $SR$ and $ER$.
\end{lemma}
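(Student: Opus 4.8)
The goal is to produce a quasi-isomorphism between the standard resolution $SR$ and the efficient resolution $ER$ of Lemma~\ref{lem: KTdp2}, meaning explicit chain maps $\sigma\colon SR\to ER$ and $\tau\colon ER\to SR$ together with a chain homotopy $h$ realizing $\id-\tau\circ\sigma\simeq 0$; since both are projective resolutions of the trivial $G$-module $\Z$ over $\Z[G]$, the fundamental theorem of homological algebra guarantees such maps exist and are unique up to homotopy, so the content is really the \emph{construction} of $\sigma$, $\tau$, $h$ in low degrees (which is all that is needed for the later applications to $H^1$ and $H^2$). The plan is to build these maps degree by degree, using the standard contracting-homotopy / acyclicity lifting argument.

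First I would set up notation: identify $\Z[G^{r+1}]$ in $SR$ with the free module on $r$-tuples via the bar normalization $(g_0,\dots,g_r)\mapsto g_0[g_0^{-1}g_1|\cdots|g_{r-1}^{-1}g_r]$, and recall the standard contracting homotopy $s_r\colon \Z[G^{r+1}]\to\Z[G^{r+2}]$, $s_r(g_0,\dots,g_r)=(1,g_0,\dots,g_r)$, which satisfies $\partial s+s\partial=\id$ in positive degrees (and splits the augmentation in degree $0$). For $ER$ I would fix the explicit $\Z[G]$-basis of each $\Z[G]^{m}$ used in Lemma~\ref{lem: KTdp2}, with differentials $d^1_n,d^2_n,d^3_n$ given there in terms of the norm elements $N_g,N_h,N_{gh}$ and the augmentation-type elements $\Delta_g,\Delta_h,\Delta_{gh}$. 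Then $\tau_0\colon ER_0=\Z[G]\to SR_0=\Z[G]$ is the identity. Inductively, having defined $\tau_{r-1}$, I define $\tau_r$ on each free generator $e$ of $ER_r$ by lifting: $\tau_{r-1}(d^r_n(e))$ is a cycle in $SR_{r-1}$ (by commutativity already established one dimension down) mapping to $0$ under the augmentation-relevant map, hence is a boundary, and applying $s_{r-1}$ produces a preimage; set $\tau_r(e)=s_{r-1}(\tau_{r-1}(d^r_n(e)))$ and extend $\Z[G]$-linearly. This forces $\partial\tau_r=\tau_{r-1}d^r_n$. Concretely this amounts to: $\tau_1$ sends the two generators to the $1$-chains $[g]$ and $[h]$ (reading off $\Delta_g,\Delta_h$); $\tau_2$ sends the three generators to explicit $2$-chains $\sum_{i=0}^{n-1}[g^i|g]$, $[h|h]$, and $\sum_{i=0}^{n-1}[(gh)^i|gh]$ coming from $N_g,N_h,N_{gh}$; and $\tau_3$ is pinned down similarly from the matrix $d^3_n$ — these are the formulas one would write out in full. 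The map $\sigma$ is built the opposite way using a contracting homotopy for $ER$ (which exists since $ER$ is a resolution of $\Z$), or alternatively just by a dimension-shifting lift exactly as for $\tau$ but in the other direction, using that $ER$ is acyclic in positive degrees; either way $\sigma_0=\id$ and $\sigma_r$ is defined by lifting $\sigma_{r-1}\partial_r$ through $d^r_n$.

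Finally I would produce the homotopy $h_r\colon SR_{r-1}\to ER_{r}$ — wait, to match the convention in~(\ref{diagram: morphisms}) the homotopy $h=(h_r)$ has $h_r$ raising degree by one and satisfies $\id-\tau_r\sigma_r=h_r\delta_{r-1}+\delta_r h_{r+1}$; this is again constructed inductively by the acyclicity of $SR$: $\id-\tau\sigma$ is a chain map inducing the identity on homology (both equal the identity of $\Z$), so it is homotopic to the identity, and each $h_r$ is obtained by lifting the known difference $\id-\tau_r\sigma_r-h_r\delta_{r-1}$ (a cycle by the inductive hypothesis) through $\delta_r$ using the contracting homotopy $s$. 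After applying $\Hom_{\Z[G]}(-,A)$ and identifying $\Hom_{\Z[G]}(\Z[G^{r+1}],A)$ with $\Map(G^r,A)$, the dual maps $\sigma^r,\tau^r$ and the dual homotopy give the claimed quasi-isomorphism on cochain complexes. The main obstacle is purely computational bookkeeping: verifying that the norm and $\Delta$ elements in $d^2_n,d^3_n$ really do annihilate / split correctly so that the lifts through $s$ land where claimed, i.e.\ checking the identities $\partial(\text{candidate }\tau_2 e)=\tau_1 d^2_n(e)$ and the degree-$3$ analogue by hand; there is no conceptual difficulty since existence is automatic, but getting the explicit $2$- and $3$-chain formulas exactly right (signs, index ranges in the sums $\sum_{i=0}^{n-1}$, the $n$ even vs.\ odd parity issues that already surfaced in Lemma~\ref{lem: KTdp2}) requires care, and it is exactly these explicit low-degree formulas that Section~\ref{subsec: efficres} needs downstream.
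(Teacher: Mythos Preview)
Your proposal is correct and takes essentially the same approach as the paper: both invoke the comparison theorem for projective resolutions to get existence and uniqueness up to homotopy, then write out low-degree maps explicitly. The one practical difference is emphasis: you give explicit formulas for $\tau_1,\tau_2$ (from $ER$ to $SR$), whereas the paper writes out $\sigma_1,\sigma_2$ (from $SR$ to $ER$) --- and it is $\sigma_2$ that is actually used downstream in \S\ref{subsec: dihedral} to convert a triple $(r,s,t)\in\Hom_{\Z[G]}(ER_2,A)$ into a standard $2$-cocycle $f\colon G\times G\to A$, so you would want to carry your lifting construction through on the $\sigma$ side as well.
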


\begin{proof} We observe that $\sigma_0 \colon \Z \to \Z$ is the identity. Thus, we have exhibited two projective resolutions of $\Z$ and an isomorphism  $\sigma_0\colon \Z \to \Z$. The comparison theorem for projective resolutions \cite[Theorem 2.2.6]{Weibel} guarantees the existence of a chain map $SR \to ER$ lifting $\sigma_0$, and that chain map is unique up to homotopy equivalence.

We define the first few morphisms $\sigma_1, \sigma_2$ for later use. It suffices to consider $\sigma_1$ on elements of $\Z[G^2]$ the form $(1, g^i h^j)$, and $\sigma_2$ on those elements of $\Z[G^3]$ of the form $(1, g^i, g^{i+i'}h^{j'})$ and $(1, g^i h , g^i h g^{i'}h^{j'})$, for $0 \le i,i' \le n-1$, and $0 \le j,j' \le 1$ due to the diagonal action of $G$. Then, 
\begin{eqnarray*} \sigma_1(1, g^ih^j) & := & \begin{cases} \big(-1 - \dots - g^{i-1},0 \big), & \textup{ if } j = 0 \\
\big(-1 - \dots - g^{i-1}, - g^i \big), & \textup{ if } j = 1
\end{cases} \\
\sigma_2(1, g^i, g^{i+i'} h^{j'}) & := & \begin{cases} (-1,0,0), & \textup{ if } i + i' \ge n \\
(0,0,0), & \textup{ otherwise } 
\end{cases} \\
 \sigma_2(1, g^i h , g^i h g^{i'}h^{j'}) & := & \Big( 0, -(j' g^{i-i'} + \sum_{e=0}^{i'-1} g^{e-i}) ,
 -\sum_{e=0}^{i'} g^{i-a+j'} h^{j'} \Big) + \begin{cases} (1,0,0), & \textup{ if } i' > i \\ (0,0,0), & \textup{ else}
\end{cases}.
\end{eqnarray*}

The higher maps may be defined similarly. See \cite{Preu} for a similar construction and further details for cyclic groups.
\end{proof} \smallskip

Given a $G$-module $M$, we may apply the contravariant functor $\Hom_G(-, M)$ to (\ref{effresZGmod}) obtain a cochain complex, where the dual maps denoted as above with the subscripts and superscripts interchanged. That is, we identify $\Hom_G(\Z[G], M)$ with the module $M$ itself, which then yields the cochain complex
	\[
		\begin{tikzcd}
			0 \ar{r} & M \ar{r}{\dd_1^n} & M^2 \ar{r}{\dd_2^n} & M^3 \ar{r}{\dd_3^n} & M^4 \ar{r} & \dots
		\end{tikzcd}
	\]

and we may identify classes in $H^1(G, M) \cong \ker(\dd_2^n)/ \im(\dd_1^n)$.

\begin{example} \label{example: XmBrauer}
We apply the theory to the surface $\calX_{m}$ for $m$ not a fourth power in $\Q$, and $M = \Pic X_K$.
Here, $G = \Gal(K/k) \isom D_4$, and $\sqrt{-a} = i \in K$. In this case, we recall that $\Pic \Xbar \isom \Pic X_K$ is a free $\Z$-module of rank 3, generated by the Galois invariant set of divisors $\{D_{i,j}\}$. We shall make the convention that $e_1 = \sqrt[4]{m}$. We take $\{[D_{1,1}], [D_{1,2}], [D_{1,3}]\}$ as a basis for $\Pic X_K$.

The action of the Galois group on $\Pic X_K$ is given by 
	\[ g = \begin{pmatrix} \,\,\,\,0 & \,\,\,\,0 & \,\,\,\,1 \,\, \\ -1 & -1 & -1 \,\, \\ \,\,\,\,0 & \,\,\,\,1 & \,\,\,\,0 \,\,  \end{pmatrix} \hspace{.5 em}\textup{ and } \hspace{.5 em} h = \begin{pmatrix} -1 & \,\,\,\,0 & \,\,\,\,0 \,\,\\ \,\,\,\,0 & -1 & \,\,\,\,0 \,\,\\ \,\,\,\,1 & \,\,\,\,1 & \,\,\,\,1\,\, \end{pmatrix} \]

To compute $H^1(G, \Pic X_K)$ for $\calX_{m}$ we make use of the efficient resolution above. Letting $v = (v_1, v_2, v_3)$, and $v'= (v_1', v_2', v_3') \in \Pic X_K$, the condition that $(v,v')$ represent a class in $H^1(G, \Pic X_K)$ translates to
	\begin{equation}\label{relations}
	N_g v = N_h v' =0 \text{ and } N_{gh} v = N_{gh} v'
	\end{equation}
	modulo those of the form $(\Delta_g w, \Delta_h w)$. One obtains the relations:
	\begin{multicols}{4}
	\begin{enumerate}
		\item $v_3' = 0 $
		\item $v_2 = v_2' $
		\item $v_1 - v_3 = v_1'$
	\end{enumerate}
	\end{multicols}
	Hence a 1-cocycle representative is given by a pair $(v,v')$ such that $v = (v_1, v_2, v_3)$ and $v' = (v_1 - v_3, v_2, 0)$, modulo those pairs of the form $(\Delta_g w, \Delta_h w)$, where
	\begin{enumerate}
		\item $\Delta_g w = (w_1 + w_2, 2w_2 - w_3, -w_1 + w_2 + w_3) $
		\item $\Delta_h w = (2w_1 - w_3, 2w_2 - w_3, 0)$
	\end{enumerate}
	One finds that $H^1(G,M) \isom \Z/4\Z$ and is generated by the pairs $(v,v')$ and $(\tilde{v}, \tilde{v}')$ where
	\begin{eqnarray*}
		v = & (1,1,1) & = [D_{1,1}] + [D_{1,2}] + [D_{1,3}]\\
		v' =& (0,1,0) & = [D_{1,2}] \\
		\tilde{v} = &(1,0,0) & = [D_{1,1}] = \tilde{v}'.
	\end{eqnarray*}
\end{example} \smallskip

\subsection{Cyclic Algebras}

For a smooth variety $X$ over a global field $k$ and a Galois extension $L/k$, we write $N_{L/k} \colon \Div X_L \to \Div X_k$, and $N_{L/k} \colon \Pic X_L \to \Pic X_k$, for the usual norm maps on $\Div X$ and $\Pic X$, respectively. The following proposition characterizes the cyclic algebras in the image of the map $\Br X \to \Br \kk(X)$. 
\begin{prop}[\cite{Corn}] Let $X$ be a smooth geometrically integral variety over a field $k$. Let $L/k$ be a finite cyclic extension and let $f \in \kk(X)$. Then the
cyclic algebra $(L/k,f)$ is in the image of the natural map $\Br X \hookrightarrow \Br \kk(X)$ if and only if $(f) = N_{L/k}(D)$ for some $D \in \Div(X_L)$. Furthermore, if $X$ is locally soluble, then $(L/k,f)$ comes from $\Br k$ if and only if $D$ can be taken to be principal.
\end{prop}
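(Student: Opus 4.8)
The plan is to translate ``$(L/k,f)\in\Br X$'' into a residue condition, compute those residues by the standard formula for cyclic symbols, and recognise the outcome as the statement that $(f)$ is a divisor norm; the second assertion then follows from a short argument with relative Brauer groups. Write $n=[L:k]$ and $C=\Gal(L/k)$. Since $L/k$ is finite separable, $X_L=X\times_{\Spec k}\Spec L\to X$ is finite \'etale of degree $n$, and — $X$ being geometrically integral — $\kk(X_L)=\kk(X)\otimes_k L$ is a cyclic field extension of $\kk(X)$ with group $C$, so that $(L/k,f)$ is the cyclic algebra $(\kk(X_L)/\kk(X),f)\in\Br\kk(X)$, with splitting character $\chi\in H^1(\kk(X),\Q/\Z)$. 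Because $X_L\to X$ is \'etale, $\chi$ is unramified at every $x\in X^{(1)}$ and every ramification index in $X_L/X$ equals $1$. Since $X$ is regular, $\Br X\hookrightarrow\Br\kk(X)$ and, by purity for the Brauer group, the image is precisely the set of classes whose residue $\partial_x$ vanishes at every $x\in X^{(1)}$; for such $x$ I let $\chi_x\in H^1(\kappa(x),\Q/\Z)$ be the specialisation of $\chi$ and $m_x$ its order, and note that, the fibre of $X_L\to X$ over $x$ being $\Spec(\kappa(x)\otimes_k L)$, a Galois $\kappa(x)$-algebra, it is a union of $n/m_x$ points each with residue field of degree $m_x$ over $\kappa(x)$.

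For the first assertion: since $\chi$ is unramified at $x$, the standard residue formula for cyclic symbols gives $\partial_x\!\big((L/k,f)\big)=\val_x(f)\cdot\chi_x$, which vanishes exactly when $m_x\mid\val_x(f)$. On the other hand, by the fibre description above, $\val_x\!\big(N_{L/k}(D)\big)=m_x\sum_{y\mid x}\val_y(D)$ for every $D\in\Div X_L$, so the image of $N_{L/k}\colon\Div X_L\to\Div X$ consists exactly of the $X^{(1)}$-supported divisors $E$ with $\val_x(E)\in m_x\Z$ for all $x$ (to realise such an $E$, distribute its coefficients over chosen points of $X_L$ above each $x$). Comparing the two computations, $(L/k,f)\in\Br X$ if and only if $(f)=N_{L/k}(D)$ for some $D\in\Div X_L$.

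For the second assertion I would assume $(L/k,f)\in\Br X$ and $X$ locally soluble, and use: the compatibility $N_{L/k}\circ\divv_{X_L}=\divv_X\circ N_{\kk(X_L)/\kk(X)}$ of the divisor norm with the field norm; the identification $\Br(E'/E)\isom E^\times/N_{E'/E}\big((E')^\times\big)$ for a cyclic extension $E'/E$ (so every class of $\Br(E'/E)$ is a cyclic algebra $(E'/E,c)$, trivial iff $c$ is a norm); and the injectivity of $\Br X\hookrightarrow\Br\kk(X)$ and of $\Br L\hookrightarrow\Br X_L$, the latter because $X_L$, a base change of the locally soluble $X$, is itself locally soluble. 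If $(L/k,f)$ is the image of some $b\in\Br k$, then $(L/k,f)$ restricts to $0$ on $X_L$ (a cyclic algebra is split by its cyclic extension), so $b$ dies in $\Br L$, whence $b\in\Br(L/k)$ and $b=(L/k,c)$ for some $c\in k^\times$; then $(L/k,f/c)=0$ in $\Br\kk(X)$, so $f/c=N_{\kk(X_L)/\kk(X)}(\phi)$ for some $\phi\in\kk(X_L)^\times$, and $(f)=\divv_X(N(\phi))=N_{L/k}(\divv_{X_L}\phi)$ exhibits a principal $D$. Conversely, if $D$ can be chosen principal, say $D=\divv_{X_L}(\phi)$ with $N_{L/k}(D)=(f)$, then $\divv_X(N(\phi))=(f)$, so $f/N(\phi)\in\kk[X]^\times=k^\times$ (using $\kk[X]^\times=k^\times$, automatic for proper $X$ and, for affine Ch\^atelet surfaces, Lemma \ref{lem: fxnsonXbar}); writing $f=c\,N(\phi)$ with $c\in k^\times$ gives $(L/k,f)=(L/k,c)$ in $\Br\kk(X)$, hence in $\Br_0 X$ because $\Br X\hookrightarrow\Br\kk(X)$.

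The formal manipulations above are routine; the step I expect to require the most care is the core of the first assertion — converting the cohomological non-ramification condition into the divisor-norm condition — which rests on the \'etaleness of $X_L\to X$ (so that $\chi$ is unramified on $X$, all ramification indices are $1$, and the residue degrees above $x$ are constant, equal to $m_x=\ord(\chi_x)$), on the residue formula $\partial_x(\chi\cup(f))=\val_x(f)\cdot\chi_x$ for unramified $\chi$, and on purity to recover $\Br X$ from its residues. The only other point needing attention is the hypothesis $\kk[X]^\times=k^\times$, which is genuinely used in one direction of the second assertion.
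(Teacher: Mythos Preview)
The paper does not supply a proof of this proposition; it is quoted from \cite{Corn} and used as a black box, so there is no in-paper argument to compare against. Your approach is the standard one and is correct: purity for the Brauer group on a smooth variety characterises $\Br X$ inside $\Br\kk(X)$ by vanishing of residues at codimension-one points, the residue formula $\partial_x(\chi\cup f)=\val_x(f)\cdot\chi_x$ (valid because $\chi$ is unramified along the \'etale cover $X_L\to X$) reduces this to $m_x\mid\val_x(f)$, and your computation of the fibre degrees of $X_L\to X$ identifies that condition with membership of $(f)$ in the image of the divisor norm. The treatment of the second assertion via $\Br(L/k)\cong k^\times/N_{L/k}(L^\times)$ and the injectivity of $\Br L\hookrightarrow\Br X_L$ under local solubility is likewise sound.

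You are also right to single out the hypothesis $\kk[X]^\times=k^\times$: the ``principal $D\Rightarrow$ constant algebra'' direction genuinely fails without it (take $X=\G_m$, $f=t$, $D=0$; then $(L/k,t)$ is a nontrivial class in $\Br\G_m/\Br k$), so the proposition as transcribed here is slightly over-general. In Corn's original setting the varieties are proper, where the hypothesis is automatic, and in the present paper the proposition is only invoked in the context of affine Ch\^atelet surfaces, where Lemma~\ref{lem: fxnsonXbar} supplies it; so the applications remain valid even though the stated generality is not quite correct.
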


In particular, in the case of surfaces in Theorem \ref{thm: short version general dihedral}, we observe that since dihedral extensions of order $2n$ contain a unique cyclic subextension of order $n$ (which is never a degree $n$ extension of the base field), the generator of $\Br X/ \Br k$ cannot easily be seen to be cyclic, and in fact may not be representable in this way. One can apply this theory to obtain a representative of a cyclic algebra generating the Brauer group after a base change instead. However, when $n$ is even, one cannot simply corestrict to obtain a generator over the ground field. Moreover, in the case of $\calX_{22}$ for example, base change to the relevant quadratic extension $\Q(i)$ already witnesses the existence of $\Z[i]$ points (e.g. $(1,5,(1+i))\,$), and thus the presence of a Brauer-Manin obstruction does not commute with quadratic base change. Thus, we search for an alternative approach to constructing explicit representatives of Brauer classes for these surfaces.

\subsection{Brauer classes via 2-cocycles in Galois cohomology} \label{subsec: H2BrU} 
Based on ideas from \cite{KT08}, given a 1-cocycle representative for a class in 
$H^1(G,\Pic X_K)$, we shall describe an algorithm for lifting to a 2-cocycle representative in $H^2(G, \calO(U_K)^\times)$ that is the restriction of a generator of $\Br X$. We first need the data of the following sequences of $G$-modules.

	\begin{lemma} \label{lem: 3exactseqs} Let $X$ be an affine Ch\^atelet surface, as in Theorem \ref{thm: extended brauer deg 4}. Then the following sequences of $G$-modules are exact.
		\begin{enumerate}
			\item Under the identification $\Pic \Xbar \cong \Pic X_K$, 
				\begin{equation} \label{PicExact}
				\begin{tikzcd} 0 \ar{r} & R \ar{r} & \displaystyle \bigoplus_{i,j=1}^n  \Z [D_{i,j}]  \ar{r} & \Pic(X_K) \ar{r} & 0
				\end{tikzcd}
				\end{equation}
				where $R$ is the subgroup of relations.

			\item From the Hochshild-Serre spectral sequence $H^p(G, H^q_{\et}(X_K, \G_m)) \implies H^{p+q}_{\et}(X, \G_m)$, we obtain the low degree terms of the long exact sequence 
				\begin{equation} \label{HSSpectral}
					\begin{tikzcd}[column sep = 1 em] \Br(K/k) \ar[hookrightarrow]{r} & \ker(\Br X \to \Br X_K) \ar{r}  & H^1(G, \Pic X_K) \ar{r} & H^3(G,K^\times)
					\end{tikzcd}
				\end{equation}
				where $\Br(K/k):= \ker(\Br k \to \Br K)$.
			\item  Let $U = X \setminus \{D_{i,j}\}$, and $R$ as above. Then, 
				\begin{equation} \label{UExact} 
					\begin{tikzcd} 0 \ar{r} & K^\times \ar{r} & \calO(U_K)^\times \ar{r}{\divv} \ar{r} & R \ar{r} & 0
					\end{tikzcd}
				\end{equation}
		\end{enumerate}
	\end{lemma}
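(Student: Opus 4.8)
The plan is to establish the three exact sequences one at a time, each following from standard constructions already developed in Section~\ref{sec: enumeration_of_BrX}. For part~(1): by the analysis following Lemma~\ref{lem: PicXbar}, the group $\Div_{X_K \setminus U_K}(X_K) = \bigoplus_{i,j} \Z[D_{i,j}]$ is free of rank $2n$ and surjects onto $\Pic(X_K) = \Pic\Xbar$ via the class map (surjectivity holds because $\Pic\Ubar = 0$, so every divisor class on $\Xbar$ is represented by one supported on the fibers over the roots of $P$). Defining $R$ to be the kernel of this surjection gives the exactness of~(\ref{PicExact}); one should note that $R$ is a $G$-submodule because $G$ permutes the $D_{i,j}$. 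Concretely $R$ is generated by the relations $\sum_j [D_{1,j}]$ and $[D_{1,j}] + [D_{2,j}]$ displayed earlier, but for the statement of the lemma only the abstract short exact sequence is needed.

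For part~(2): since Lemma~\ref{lem: fxnsonXbar} gives $\kbar[X]^\times = \kbar^\times$, and since $\Pic X_K = \Pic\Xbar$ with $H^1(K,\Pic\Xbar)=0$ (as noted in Section~\ref{sec: enumeration_of_BrX}, using torsion-freeness), the five-term exact sequence of low-degree terms for the Hochschild--Serre spectral sequence $H^p(G, H^q_{\et}(X_K,\G_m)) \Rightarrow H^{p+q}_{\et}(X,\G_m)$ reads
\[
0 \to H^1(G, K^\times) \to \Pic X \to (\Pic X_K)^G \to H^2(G,K^\times) \to \ker(\Br X \to \Br X_K) \to H^1(G,\Pic X_K) \to H^3(G,K^\times),
\]
where we have used $H^1_{\et}(X_K,\G_m) = \Pic X_K$ and $H^0_{\et}(X_K,\G_m) = K^\times$ by Lemma~\ref{lem: fxnsonXbar}. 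Truncating this gives precisely~(\ref{HSSpectral}) once one identifies $\ker(\Br k \to \Br K) = H^2(G,K^\times) / \mathrm{im}$ appropriately; more precisely the segment $H^2(G,K^\times) \to \ker(\Br X \to \Br X_K) \to H^1(G,\Pic X_K) \to H^3(G,K^\times)$ yields the claimed four-term sequence after replacing the image of $H^2(G,K^\times)$ by $\Br(K/k)$, which is legitimate since $H^2(G,K^\times) = \Br(K/k)$ by the classical computation of relative Brauer groups via Galois cohomology.

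For part~(3): this is the analogue over $K$ of the short exact sequence~(\ref{eq: GkSES}), twisted by the identification of $K[U]^\times/K^\times$ with a submodule of the divisor group. Starting from the sequence of units and divisors on $U_K \subset X_K$, exactness of
\[
0 \to K^\times \to \calO(U_K)^\times \xrightarrow{\divv} \Div_{X_K \setminus U_K}(X_K)
\]
holds because $X_K$ is a smooth integral variety and the only functions with empty divisor on $X_K$ are the invertible functions on $\Xbar$, namely constants (Lemma~\ref{lem: fxnsonXbar}). The image of $\divv$ is exactly the kernel $R$ of the class map to $\Pic(X_K)$: indeed a divisor $D$ supported away from $U_K$ is principal on $X_K$ iff $D = \divv(f)$ for some $f \in \kk(X_K)$, and such an $f$ is automatically a unit on $U_K$ since $\divv(f)$ meets only the components $D_{i,j}$. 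This identifies $R \cong \calO(U_K)^\times/K^\times$ and gives exactness of~(\ref{UExact}). The main point requiring care across all three parts is the bookkeeping of $G$-module structures --- verifying that every map ($\divv$, the class map, the Hochschild--Serre edge maps) is $G$-equivariant --- which is routine but must be checked; I expect this equivariance verification, rather than any exactness claim per se, to be the only real content beyond citing the spectral sequence and the earlier lemmas.
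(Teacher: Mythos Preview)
Your proposal is correct, and in fact the paper does not supply a proof of this lemma at all: it is stated without proof, with the understanding that each sequence follows immediately from the material already established in Section~\ref{sec: enumeration_of_BrX} together with the standard low-degree exact sequence of the Hochschild--Serre spectral sequence. Your argument for parts~(1) and~(3) matches exactly what the paper later spells out as the four-term sequence~(\ref{4termExact}) in the proof of Proposition~\ref{prop: KT08 II}, split into its two constituent short exact sequences; and your derivation of~(2) from the spectral sequence is the intended one.

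One small point worth tightening: in part~(2) you should be explicit that the injectivity of $\Br(K/k)\hookrightarrow \ker(\Br X\to\Br X_K)$ indicated by the hook arrow is not automatic from the five-term sequence alone --- it requires that the transgression $(\Pic X_K)^G\to H^2(G,K^\times)$ vanish, or equivalently that $\Br k\to\Br X$ be injective. For the applications in the paper only the cokernel $\Br X/\Br_0 X$ matters, so this is harmless, but your sentence about ``replacing the image of $H^2(G,K^\times)$ by $\Br(K/k)$'' reads as though an adjustment is being made when in fact the two groups are simply equal; the content of the hook arrow lies elsewhere.
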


 Thus, given a 1-cocycle $\Btilde \in H^1(G, \Pic X_K)$, we apply the connecting homomorphism of the long exact sequence in cohomology of (\ref{PicExact})  to the cocycle representative to obtain a 2-cocycle representative of the corresponding element in $H^2(G,R)$. Then, extending $K$ if necessary, we kill the obstruction in $H^3(G, K^\times)$ to lifting the element of $H^2(G, R)$ to an element $B \in H^2(G, \calO(U_K)^\times)$. We carry out the lifting explicitly on the level of cocycles. As we shall soon demonstrate, the element $B$ will be the restriction of an element $\calA \in \Br X$ which agrees with the image of the class $\Btilde$ in $\Br U$ up to a constant algebra in $\Br k$. \smallskip

	We will also be interested in when the sequence (\ref{UExact}) has a splitting, and will return to this in the next section in a particular application.
	\begin{claim} \cite[Proposition 2]{DSW15} \label{claim: splitting} The sequence (\ref{UExact}) is split for (affine) norm form varieties if and only if the classes can be lifted to $\calO(U_K)^\times$ in a Galois-equivariant way, via a map 
	\[ \phi: \calO(U_K)^\times/ K^\times \to \calO(U_K)^\times, \] 
	defined by $[t - e_i] \mapsto \rho^{-1}(t - e_i)$ and $[u_1] \mapsto \xi^{-1} u_1$, where $\rho \in k(e_1)^\times$, and $\xi \in k(\sqrt{a})^\times$. Because of the defining equation of $X$, which determines the unique relation on the invertible functions on $U_K$, the pair $(\rho, \xi)$ defines a splitting if and only if 
	\[ c N_{k(e_1)/k}(\rho) = N_{k(\sqrt{a})/k}(\xi). \] 
	\end{claim}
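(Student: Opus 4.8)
The plan is to translate ``splitting of \eqref{UExact}'' into the concrete language of invertible functions on $U_K$, and then to read off the norm condition from the single defining relation \eqref{eqn: KUrelation}. First I would record the bookkeeping: by \eqref{eq: GalKSES} (over $K$) the map $\divv$ identifies $\calO(U_K)^\times/K^\times$ with the relation submodule $R\subseteq\bigoplus_{i,j}\Z[D_{i,j}]$ of \eqref{PicExact}, so that a $G$-module splitting of \eqref{UExact} is exactly the datum of a $G$-equivariant section $\phi$ of the quotient map $\calO(U_K)^\times\twoheadrightarrow\calO(U_K)^\times/K^\times$. I would also note at the outset that we may enlarge $K$ so that $\sqrt{a}\in K$ (resp. $L\subseteq K$), which affects nothing in the statement.

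Next I would reduce everything to the explicit generators. Using the presentation recorded around \eqref{eqn: KUrelation}--\eqref{eqn: KUGalModule}, $\calO(U_K)^\times/K^\times$ is generated by $[t-e_1],\dots,[t-e_n],[u_1],[u_2]$ subject to the single relation $\sum_i[t-e_i]=[u_1]+[u_2]$; as a $G$-set these form the orbit $\{[t-e_i]\}$, on which $G$ acts through its transitive action on the roots of $P$ with $\Stab([t-e_1])=\Gal(K/k(e_1))$ (here using that $P$ is irreducible, so $k(e_1)\isom k_t$), together with the orbit $\{[u_1],[u_2]\}$, with $\Stab([u_1])=\Gal(K/L)$. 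Hence a $G$-equivariant section $\phi$ is determined by $\phi([t-e_1])$ and $\phi([u_1])$, and since any two lifts of a given class differ by an element of $K^\times$, these are necessarily of the form $\rho^{-1}(t-e_1)$ and $\xi^{-1}u_1$; stabilizer-invariance of the lift then forces $\rho\in k(e_1)^\times$ and $\xi\in k(\sqrt a)^\times$. I would check the converse — that any such pair $(\rho,\xi)$ does produce a well-defined $G$-equivariant lift on the free permutation module on these generators — the key point being that independence of the transversal element used to transport $[t-e_1]$ to $[t-e_i]$ (resp. $[u_1]$ to $[u_2]$) is precisely the stabilizer-invariance just imposed.

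The last and most delicate step is the descent through the relation: the map above induces an honest section of $\calO(U_K)^\times/K^\times$ (equivalently, a splitting of \eqref{UExact}) if and only if it respects \eqref{eqn: KUrelation}, i.e. $\prod_{i=1}^n\phi([t-e_i])=\phi([u_1])\phi([u_2])$ in $\calO(U_K)^\times$. Choosing a transversal $\sigma_i\in G$ with $\sigma_i(e_1)=e_i$, the left side is $N_{k(e_1)/k}(\rho)^{-1}\prod_i(t-e_i)$ and the right side is $N_{k(\sqrt a)/k}(\xi)^{-1}u_1u_2$; substituting the defining equation $u_1u_2=cP(t)=c\prod_i(t-e_i)$ and cancelling the nonzero factor $\prod_i(t-e_i)$ collapses this to $c\,N_{k(e_1)/k}(\rho)=N_{k(\sqrt a)/k}(\xi)$, which is the assertion. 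I expect the main obstacle to be careful bookkeeping rather than a conceptual difficulty: one must verify that the $G$-equivariant extension off the generators is genuinely well-defined \emph{before} the relation is imposed, and, to get the stated generality for norm form varieties $N_{L/k}(\vec z)=cP(t)$ with $L/k$ possibly non-Galois, one must phrase the orbit $\{[u_1],[u_2]\}$ and its stabilizer in terms of the embeddings of $L$ and replace $N_{k(\sqrt a)/k}$ by $N_{L/k}$ throughout; this is handled just as in \cite[Proposition 2]{DSW15}, to which I would defer for the general case.
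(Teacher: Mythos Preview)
Your argument is correct and follows the natural line: identify a $G$-equivariant section with lifts of the two orbit representatives $[t-e_1]$ and $[u_1]$, use stabilizer invariance to pin down $\rho\in k(e_1)^\times$ and $\xi\in L^\times$, and then impose the single relation \eqref{eqn: KUrelation} to obtain the norm condition. Note, however, that the paper does not give its own proof of this claim; it is stated with a direct citation to \cite[Proposition~2]{DSW15} and no argument is supplied. Your sketch is essentially a reconstruction of that proof, and there is nothing to compare it against within the paper itself.
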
 \smallskip
 
	Combining the cohomology exact sequences arising from the sequences (\ref{PicExact}), (\ref{UExact}), and a portion of the sequence (\ref{HSSpectral}), one obtains a diagram:
	\begin{equation} \label{CohomDiagram} 
		\begin{tikzcd}[column sep = 1.2 em]
			{ } & {} & { } & \Br(U) \\
			{} &  \ker(\Br (X) \to \Br (X_K)) \ar{urr} \ar{d} & H^2(G, \calO(U_K)^\times) \ar{ur} \ar{d}{\mu} & {} \\
			0 \ar{r} & H^1(G, \Pic X_K) \ar{r}{\delta} \ar[swap]{dr}{\epsilon} & H^2(G,R) \ar{d}{\nu} \ar{r} & H^2(G, \displaystyle \bigoplus_{i,j} \Z [D_{i,j}]) \\
			{ } &  { } &  H^3(G, K^\times) & { }
		\end{tikzcd}
	\end{equation} \smallskip

	\begin{remark}
		In this diagram, as in \cite{KT08}, we have used the fact that $\Div_{X_K \setminus U_K}(X_K) = \bigoplus_{i,j} \Z [D_{i,j}]$ is a permutation module, hence its first cohomology vanishes. The maps to $\Br(U)$ are respectively the restriction map from $\Br X$ and the map from $H^2(G,\calO(U_K)^\times)$ which sends the class of a 2-cocycle with values in $\calO(U_K)^\times$ to the class in $\Br U$ represented by the same cocycle, viewed as a \^Cech cocycle for the covering $U_K \to U$.
	\end{remark} \smallskip

	\begin{proposition} Let $X$ be an affine Ch\^atelet surface. The map $\epsilon$ in diagram (\ref{CohomDiagram}) is the rightmost map in the sequence (\ref{HSSpectral}).
	\end{proposition}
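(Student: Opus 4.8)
The plan is to recognise the rightmost arrow of (\ref{HSSpectral}) as the differential $d_2\colon E_2^{1,1}\to E_2^{3,0}$ of the Hochschild--Serre (descent) spectral sequence $E_2^{p,q}=H^p(G,H^q_{\et}(X_K,\G_m))\Rightarrow H^{p+q}_{\et}(X,\G_m)$, under the canonical identifications $E_2^{1,1}=H^1(G,\Pic X_K)$ (using $H^1_{\et}(X_K,\G_m)=\Pic X_K$) and $E_2^{3,0}=H^3(G,K^\times)$ (using $\calO(X_K)^\times=K^\times$, which holds by Lemma~\ref{lem: fxnsonXbar} and Galois descent). By its construction in (\ref{CohomDiagram}) one has $\epsilon=\nu\circ\delta$, where $\delta$ and $\nu$ are the connecting homomorphisms of the long exact $G$-cohomology sequences of (\ref{PicExact}) and (\ref{UExact}); so what must be shown is precisely $d_2=\nu\circ\delta$ as maps $H^1(G,\Pic X_K)\to H^3(G,K^\times)$.

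The first step is to realise the sequences (\ref{PicExact}) and (\ref{UExact}), spliced along $R$, as the degree-$\le 1$ part of the \'etale cohomology of one $G$-equivariant short exact sequence of sheaves on $X_K$, namely the divisor sequence
\[
0\longrightarrow \G_{m,X_K}\longrightarrow (j_K)_*\G_{m,U_K}\xrightarrow{\ \divv\ }\bigoplus_{i,j}(i_{D_{ij}})_*\Z\longrightarrow 0,
\]
where $j_K\colon U_K\hookrightarrow X_K$. It gives a distinguished triangle of complexes of $G$-modules $R\Gamma_{\et}(X_K,\G_m)\to R\Gamma_{\et}(U_K,\G_m)\to\bigoplus_{i,j}R\Gamma_{\et}(D_{ij,K},\Z)\xrightarrow{+1}$. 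Now $H^1_{\et}(U_K,\G_m)=\Pic U_K=0$ (as in the proof of Lemma~\ref{lem: PicXbar}, the sequence (\ref{eq: GkSES}) being valid over $K$) and $H^1_{\et}(D_{ij,K},\Z)=\Hom_{\mathrm{cts}}(\pi_1^{\et}(D_{ij,K}),\Z)=0$, while the degree-$0$ terms are $K^\times$, $\calO(U_K)^\times$ and $\Div_{X_K\setminus U_K}(X_K)$; hence applying $\tau_{\le 1}$ to the triangle yields a quasi-isomorphism of complexes of $G$-modules
\[
\tau_{\le 1}R\Gamma_{\et}(X_K,\G_m)\ \simeq\ \bigl[\,\calO(U_K)^\times\xrightarrow{\ \divv\ }\Div_{X_K\setminus U_K}(X_K)\,\bigr]
\]
(the right-hand complex placed in cohomological degrees $0$ and $1$), compatible with the identifications $H^0=K^\times$ and $H^1=\Pic X_K$ on both sides. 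Taking the long exact $G$-cohomology sequence of this two-term complex recovers $0\to K^\times\to\calO(U_K)^\times\xrightarrow{\divv}\Div_{X_K\setminus U_K}(X_K)\to\Pic X_K\to 0$, i.e.\ (\ref{UExact}) spliced with (\ref{PicExact}) along $R$; by construction $\nu\circ\delta$ is the iterated connecting homomorphism of this four-term exact sequence.

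It remains to carry out the homological comparison $d_2=\nu\circ\delta$. The differential $d_2^{1,1}$ of the Hochschild--Serre spectral sequence depends only on $\tau_{\le 1}R\Gamma_{\et}(X_K,\G_m)$, on which it is cup product with the $k$-invariant of that two-term complex, an element of $\Ext^2_{\Z[G]}(\Pic X_K,K^\times)$; by the previous step this $k$-invariant is the Yoneda class of the four-term exact sequence above, and cup product with a Yoneda $2$-extension class is, by definition, the iterated connecting homomorphism $\nu\circ\delta$. (Equivalently, and without derived categories, one may trace the zig-zag defining $d_2$ through a functorial flasque resolution, lifting a $1$-cocycle $c\colon G\to\Pic X_K$ first to $\Div_{X_K\setminus U_K}(X_K)$ and then to $\calO(U_K)^\times$; this reproduces verbatim the cocycle recipe for $\nu\circ\delta$, exactly as in \cite{KT08}.) Either way $d_2=\nu\circ\delta=\epsilon$. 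The only geometric input is the pair of vanishings $\Pic U_K=0$ and $H^1_{\et}(D_{ij,K},\Z)=0$, both elementary; I expect the main obstacle to be the bookkeeping in this last step --- verifying that the spectral-sequence differential really is cup product with the $k$-invariant of the truncation, that this cup product equals the iterated connecting map, and that the comparison map displayed above is a quasi-isomorphism intertwining the identifications of $H^0$ and $H^1$ --- none of which is deep but all of which requires care.
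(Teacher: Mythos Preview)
Your argument is correct and reaches the same conclusion as the paper, but by a somewhat more abstract route. Both proofs begin with the divisor exact sequence $0\to\G_m\to j_*\G_{m,U}\to\calZ^1_D\to 0$ on $X_K$, extract from its global sections the four-term sequence $0\to K^\times\to\calO(U_K)^\times\to\Div_{X_K\setminus U_K}(X_K)\to\Pic X_K\to 0$, and identify $\epsilon=\nu\circ\delta$ with its iterated connecting homomorphism. The difference lies in how this is compared with the spectral-sequence differential. The paper proceeds concretely: it fixes an injective resolution $\calI_\bullet$ of $\G_m$, uses the comparison theorem to produce a morphism of four-term sequences which is the identity on the outer terms $K^\times$ and $\Pic X_K$, argues that the edge map $d_2^{1,1}$ is the iterated connecting homomorphism of the second four-term sequence (the one built from $\calI_\bullet$) by unwinding the snake-lemma description of $d_2$, and concludes by functoriality of connecting maps under morphisms of exact sequences. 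You instead invoke the general fact that $d_2^{1,1}$ for the hypercohomology spectral sequence depends only on $\tau_{\le 1}R\Gamma_{\et}(X_K,\G_m)$ and equals the Yoneda product with its $k$-invariant, and then identify that truncation with the two-term complex $[\calO(U_K)^\times\xrightarrow{\divv}\Div_{X_K\setminus U_K}(X_K)]$ via the vanishings $\Pic U_K=0$ and $H^1_{\et}(D_{ij,K},\Z)=0$. Your route is cleaner and more conceptual once one accepts the $k$-invariant formalism; the paper's is more self-contained and stays closer to the explicit cocycle manipulations that are used downstream in \S\ref{subsec: H2BrU} and Proposition~\ref{prop: KT08 II}.

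One small imprecision worth noting: the distinguished triangle you write has middle term $R\Gamma_{\et}(U_K,\G_m)$, but applying $R\Gamma_{\et}(X_K,-)$ to the short exact sequence of sheaves yields $R\Gamma_{\et}(X_K,(j_K)_*\G_{m,U_K})$ instead, since the sequence involves $j_*$ rather than $Rj_*$. This does not affect your argument, because you only use degrees $0$ and $1$: the $H^0$ agrees with $\calO(U_K)^\times$ in any case, and $H^1(X_K,(j_K)_*\G_{m,U_K})$ injects into $H^1_{\et}(U_K,\G_m)=\Pic U_K=0$ by the Leray spectral sequence, so the required vanishing still holds.
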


	\begin{proof} We proceed as in \cite{KT08}, with some additional details added in for exposition.
		Let $j: U \to X$ be the inclusion, and let $D$ be the complement of $U$ in $X$. There is an exact sequence of \`etale sheaves on $X$ (\cite{CTS87})
		\begin{equation} \label{resGm}
			\begin{tikzcd} 0 \ar{r} & \G_m \ar{r} & j_\ast(\G_m|_U) \ar{r}& \calZ^1_D \ar{r} & 0,
			\end{tikzcd}
		\end{equation}
		where $\calZ_D^1$ is the sheaf of divisors on $X$ with support on $D$. By evaluating global sections on $X_K$ (i.e. by applying the functor $\Hom_{X_{\et}}(\hat{\G}_m, - ))$, and forming the long exact sequence in cohomology, we obtain
	\begin{equation*} \small
	\begin{tikzcd} [column sep = small]
	H^0(X_K, \G_m) \ar[hookrightarrow]{r} & H^0(U_K,\G_m) \ar{r} & \Hom_X(\hat{\G}_m, Div_{D}X)
 	\ar{r} & H^1(X_K, \G_m) \ar{r} & H^1(U_K, \G_m) \ar{r} & \cdots .
 	\end{tikzcd} 
 	\end{equation*}
 	Under the identifications,
 	\begin{enumerate}[(i)]
 		\item $H^0(X, \G_m) \isom \calO_X(X)^\times = K^\times$
 		\item $H^0(U_K, \G_m) \isom  \calO(U_K)^\times $
 		\item $H^1(X_K, \G_m) = \Pic X_K $
 		\item $H^1(U_K, \G_m) = \Pic U_K $ %=0
 		\item $\Hom(\hat{\G}_m, \Div_{D}X) = \Div_{X_K \setminus U_K}(X_K),$
 	\end{enumerate}
 	we obtain the exact sequence of $G$-modules
 	\begin{equation} \label{4termExact}
	\begin{tikzcd}
		0 \ar{r}&  K^\times \ar{r} & \calO(U_K)^\times \ar{r} & \Div_{X_K \setminus U_K}(X_K) \ar{r} & \Pic(X_K) \ar{r} & 0 \,. 
	\end{tikzcd} 
	\end{equation}

 	The sequence (\ref{resGm}) provides a resolution of the sheaf $\G_{m}$ on $X$. Let $(\calI_{\bullet}, \dd)$ be a resolution of $\G_m$ by injective sheaves on $X$. Then by the comparison theorem \cite[Theorem 2.2.6]{Weibel}, there exists a morphism of resolutions from the resolution (\ref{resGm}) to $\calI_\bullet$. 
	The \'etale cohomology groups $H^i_{\et}(X_K, \G_m)$ are computed as
	\begin{equation}  \label{injectiveres}
		H^i[ \begin{tikzcd} 
		0 \ar{r} & H^0(X_K, \calI_0) \ar{r}{d^0} & H^0(X_K, \calI_1) \ar{r}{d^1} &  H^0(X_K, \calI_2) \ar{r}{d^2} & \dots 
		\end{tikzcd}]
	\end{equation}

	The Picard group is given by $\Pic(X_K) \isom H^1_{\et}(X_K, \G_m) = \ker{ \dd^1}/ \im \dd^0.$ Additionally,
	$\im(H^0(X_K,\calI_0) \to H^0(X_K, \calI_1)) = H^0(X_K, \calI_0)/ H^0(X_K, \G_m)$, and we've previously observed that $
	H^0(X_K, \G_m) = K^\times$. Thus, we obtain the 4-term exact sequence
	\[ \begin{tikzcd} 
		K^\times \ar{r} & H^0(X_K, \calI_0) \ar{r} & \ker \left(H^0(X_K, \calI_1) \to H^0(X_K, \calI_2)\right) \ar{r} & \Pic(X_K) 
		\end{tikzcd}
	\]
	and hence the following diagram

	\begin{equation} \label{4termdiagram}
		\begin{tikzcd} 
			K^\times \ar[hookrightarrow]{r} \ar[equal]{d} & \calO(U_K)^\times \ar{r}{\divv} \ar{d}{\psi} & \Div_{X_K \setminus U_K} X_K \ar[->>]{r} \ar{d}{\varphi}& \Pic(X_K) \ar[equal]{d} \\
			K^\times \ar[hookrightarrow]{r} & H^0(X_K, \calI_0) \ar{r}{\dd^0} & \ker(\dd^1) \ar[->>]{r} & \Pic(X_K),
		\end{tikzcd}
	\end{equation}
	where the first and last maps are identity maps.

	The four term exact sequence (\ref{4termExact}) is the amalgamation of two short exact sequences. The map $\epsilon$ in diagram (\ref{CohomDiagram}) is the composition of the connecting homomorphisms of the two long exact sequences in cohomology. That is, 
	\[ 
		\begin{tikzcd} 
		H^1(G, \Pic X_K) \ar{r}{\delta} \ar[bend left = 20]{rr}{\epsilon} & H^2(G,R) \ar{r}{\nu} & H^3(G, K^\times) 
		\end{tikzcd}
	\]

	We claim that the edge map in (\ref{HSSpectral}) is equal to a similar composition of connecting homomorphisms coming from the 4-term exact sequence of the bottom row of diagram (\ref{4termdiagram}). In particular, we claim that the edge map is the composition
	\[ H^1(G, \Pic X_K) \to H^2(G, \im(\dd^0)) \to H^3(G,K^\times) \] 

	This follows from the construction of the differentials on the $E_2$-page of the Hochschild-Serre spectral sequence in \'etale cohomology, for the Galois covering $X_K \to X$. There is a connecting homomorphism that is constructed as in the snake lemma, and this produces the desired map $d_2^{1,1} \colon E^{1,1}_2 = H^1(G, \Pic X_K) \to E^{3,0}_2 = H^3(G, K^\times)$.

	Then, as there exists a map between these sequences inducing the identity maps on the first and last terms, the morphism $\epsilon$ is equal to the edge map of (\ref{HSSpectral}).
	\end{proof}

	\begin{proposition} \label{prop: KT08 II} Let $X$ satisfy the conditions of Lemma \ref{lem: 3exactseqs} and let $\Btilde \in H^1(G,\Pic X_K)$ with $\epsilon(\Btilde) = 0$. Let $\calA \in \ker(\Br X \to \Br X_K)$ be a lift of $\Btilde$ by the map $\lambda$ and let $B \in H^2(G, \calO(U_K)^\times)$ be a lift of $\delta(\Btilde)$ by the map $\mu$ in (\ref{CohomDiagram}). Then the images of $\calA$ and $B$ in $\Br U$ via the maps in (\ref{CohomDiagram}) are equal modulo a constant algebra in $\Br k$.
	\end{proposition}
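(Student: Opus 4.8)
The plan is to place both $\calA|_U$ and the image of $B$ inside the subgroup of $\Br U$ identified with $H^2(G,\calO(U_K)^\times)$ and to show that their classes there agree after applying $\mu$; since the long exact sequence attached to (\ref{UExact}) identifies $\ker\mu$ with the image of $H^2(G,K^\times) = \Br(K/k) \subseteq \Br k$, this will force $\calA|_U$ and the image of $B$ to differ by a constant algebra. For the identification $H^2(G,\calO(U_K)^\times) \cong \ker(\Br U \to \Br U_K)$: because $\Pic\Ubar = 0$, the row $q = 1$ of the Hochschild--Serre spectral sequence $H^p(G,H^q_{\et}(U_K,\G_m)) \Rightarrow H^{p+q}_{\et}(U,\G_m)$ vanishes, so $E_\infty^{2,0} = E_2^{2,0} = H^2(G,\calO(U_K)^\times)$ is exactly the kernel of the edge map $\Br U \to \Br U_K$, and the standard comparison of \v{C}ech and derived--functor cohomology for the Galois cover $U_K \to U$ shows that the map $H^2(G,\calO(U_K)^\times) \to \Br U$ of the Remark following (\ref{CohomDiagram}) is this edge inclusion. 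Hence the image of $B$ lies in $\ker(\Br U \to \Br U_K)$ by construction, and $\calA|_U$ lies there because $\calA \in \ker(\Br X \to \Br X_K)$ and restriction along $U \hookrightarrow X$ commutes with base change to $K$; write $\mathrm{res}_U\colon \ker(\Br X \to \Br X_K) \to H^2(G,\calO(U_K)^\times)$ for the induced map, so that the edge inclusion sends $\mathrm{res}_U\calA$ to $\calA|_U$.

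The heart of the argument is the commutativity of
\[
\begin{CD}
\ker(\Br X \to \Br X_K) @>{e_X}>> H^1(G,\Pic X_K)\\
@V{\mathrm{res}_U}VV @VV{\delta}V\\
H^2(G,\calO(U_K)^\times) @>{\mu}>> H^2(G,R),
\end{CD}
\]
where $e_X$ is the map $\ker(\Br X \to \Br X_K) \to H^1(G,\Pic X_K)$ appearing in (\ref{HSSpectral}) and (\ref{CohomDiagram}), so that $e_X(\calA) = \Btilde$. Granting this, $\mu(\mathrm{res}_U\calA) = \delta(\Btilde) = \mu(B)$, so $\mathrm{res}_U\calA$ and $B$ differ by an element of $\ker\mu = \mathrm{im}\big(H^2(G,K^\times)\big)$; applying the edge inclusion and using functoriality of the Hochschild--Serre spectral sequence under $U \to \Spec k$ (which identifies $\Br(K/k) = H^2(G,K^\times) \hookrightarrow \Br k$ with the corresponding edge map) yields that $\calA|_U$ and the image of $B$ differ by the image of a constant algebra, which is the assertion.

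To prove the square commutes I would argue exactly as in the preceding proposition, using its comparison diagram (\ref{4termdiagram}). The four--term exact sequence (\ref{4termExact}) maps, by the identity on $K^\times$ and on $\Pic X_K$, to the four--term sequence extracted from an injective resolution $\calI_\bullet$ of $\G_m$ on $X_K$; splitting (\ref{4termExact}) into (\ref{UExact}) and (\ref{PicExact}) and splitting the injective row analogously (with $\mathrm{im}\,\dd^0$ in place of $R$ and $\ker\dd^1$ in place of $\bigoplus_{i,j}\Z[D_{i,j}]$) produces compatible vertical maps, under which $\delta$ and $\mu$ go over to the snake--lemma connecting maps of the injective row. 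With the description of the Hochschild--Serre differential for the cover $X_K \to X$ recalled in the previous proof, the composite $\delta\circ e_X$ transports to the map sending $\calA$ to its image in $H^2(X,\calI_0) = H^2(G,H^0(X_K,\calI_0))$ read through $\ker\dd^1$, and tracing the morphism of resolutions from (\ref{resGm}) to $\calI_\bullet$ supplied by (\ref{4termdiagram}) shows that $\mu\circ\mathrm{res}_U$ transports to the same class; hence the square commutes.

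The step I expect to be the main obstacle is precisely this last reconciliation: matching the Hochschild--Serre edge map, which is a spectral--sequence construction, with the snake--lemma connecting homomorphisms attached to (\ref{4termExact}), all while carrying along the restriction to $U$. Concretely one picks a representing cocycle for $\calA$ in the total complex of the double complex $C^p(G,H^0(X_K,\calI_q))$ and checks that the $(1,1)$--component read off by the edge map is sent to the same element of $H^2(G,R)$ whether one first pushes forward along $\divv\colon\calO(U_K)^\times \to R$ or first applies the connecting map of (\ref{PicExact}). This is bookkeeping rather than a conceptual difficulty once (\ref{4termdiagram}) is in hand --- its role is to remove the need to choose an injective resolution explicitly --- and the verification proceeds essentially as in \cite{KT08}.
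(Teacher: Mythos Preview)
Your proposal is correct and follows essentially the same strategy as the paper's proof. The paper carries out the argument by working directly with explicit cocycle representatives in the total complex of the double complex $C^p(G,H^0(X_K,\calI_q))$: it shows that $\calA$ can be represented by a class of the form $(0,\varphi(\gammatilde),\alpha_2)$, that $B$ is represented by $\beta$ with $\divv(\beta)=\partial\gammatilde$, and then checks that $\psi(\beta)-\alpha_2 \in \ker(\dd^{2,0}) = H^2(G,K^\times)$, which upon restriction to $U$ yields the conclusion. Your reformulation packages exactly this computation as the commutativity of the square relating $e_X$, $\delta$, $\mathrm{res}_U$, and $\mu$; the ``bookkeeping'' you anticipate as the obstacle is precisely the explicit total-complex manipulation the paper performs, using the same comparison diagram~(\ref{4termdiagram}). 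The only difference is presentational: you isolate the abstract statement $\mu\circ\mathrm{res}_U = \delta\circ e_X$ and then appeal to $\ker\mu = \im H^2(G,K^\times)$, whereas the paper verifies the equality $\psi(\beta)-\alpha_2 \in H^2(G,K^\times)$ by hand without naming the square.
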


	\begin{proof} Let $\Btilde \in H^1(G, \Pic X_K)$ be a generator, with 1-cocycle representative $\widetilde{\beta}$. Suppose that $\epsilon(\Btilde) = 0$. In order to compute $\delta(\Btilde)$, we must first lift $\betatilde$ to a 1 cochain $\gammatilde$ with values in $\bigoplus_{i,j} \Z[D_{i,j}]$. An application of the snake lemma for the complex (\ref{PicExact}) provides us with the desired class $\delta(\Btilde) \in H^2(G,R)$. Now, we consider the following diagram of cochains with exact rows, induced by the the 4-term exact sequence (\ref{4termExact}), where we let $M := \Div_{X_K \setminus U_K}(X_K)$:

 	\begin{equation}\label{CochainsDiag} \small
 		\begin{tikzcd}[column sep = small]
 		C^i(G, K^\times)\ar[hook]{r} \ar{d}{\partial}& C^i(G,\calO(U_K)^\times) \ar{d}{\partial} \ar{r}{\divv} & C^i(G,M) \ar{r}{\pi} \ar{d}{\partial} & C^i(G, \Pic(X_K)) \ar{d}{\partial} \ar[dotted, bend right = 2]{dll}\\
 		C^{i+1}(G, K^\times) \ar[hook]{r} & C^{i+1}(G,\calO(U_K)^\times) \ar{r}{\divv} & C^{i+1}(G,M) \ar{r}{\pi} & C^{i+1}(G, \Pic(X_K))
 		\end{tikzcd}
 	\end{equation}

 	By assumption $\pi(\widetilde{\gamma}) = \widetilde{\beta}$, and $\widetilde{\beta} \in Z^1(G, \Pic X_K)$, hence $\partial(\widetilde{\beta}) = 0$. Thus $\pi \circ \partial(\widetilde{\gamma}) = 0$. That is, $\partial(\widetilde{\gamma}) \in \ker \pi$, thus by exactness of the bottom row of (\ref{CochainsDiag}), $\partial(\widetilde{\gamma}) = \divv(\beta)$ for some 2-cochain $\beta \in C^2(G, \calO(U_K)^\times)$. Moreover, we observe that $\partial(\beta)$ can be identified with a 3-cocycle representative of $\epsilon(\Btilde)$, which vanishes by assumption. By modifying $\partial(\beta)$ by a 2-cochain in $C^2(G,K^\times)$, we may assume that $\partial(\beta) = 0$. Hence $\beta$ is a 2-cocycle representative of a class $B \in H^2(G,\calO(U_K)^\times)$ such that $\delta(\Btilde) = \mu(B)$.

	We may identify $\Br X$ with the cohomology of the total complex of the spectral sequence $C^p(G, H^0(X_K, \calI_q)$. Then we claim that a class $\calA \in \Br X$ is represented by 2-cocycle of the total complex 
	\[(\alpha_0, \alpha_1, \alpha_2) \in C^0(G, H^0(X_K, \calI_2)) \oplus C^1(G, H^0(X_K, \calI_1)) \oplus C^2(G, H^0(X_K, \calI_0). \]
	Furthermore, if $\calA \in \ker(\Br X \to \Br X_K)$, is a lift of $B$, then $\calA$ has a cocycle representative of the form $(0, \varphi(\gammatilde), \alpha_2)$.

	To see this, we form the double complex, by first considering the resolution $(\calI_\bullet, \dd)$ which forms a complex of $G$-modules, and then we resolve each term horizontally: 
		\begin{equation} \label{bicomplex} \small
			\begin{tikzcd}
			\vdots & \vdots & \vdots & { } \\ 
			C^0(G,H^0(X_K, \calI_2)) \ar{r}{\partial^{0,2}} \ar{u}{\dd^{0,2}} & C^1(G, H^0(X_K, \calI_2)) \ar{r}{\partial^{1,2}} \ar{u}{\dd^{1,2}} & C^2(G, H^0(X_K, \calI_2)) \ar{r}{\partial^{2,2}}\ar{u}{\dd^{2,2}} & \dots  \\
			C^0(G,H^0(X_K, \calI_1)) \ar{r}{\partial^{0,1}} \ar{u}{\dd^{0,1}} & C^1(G, H^0(X_K, \calI_1)) \ar{r}{\partial^{1,1}} \ar{u}{\dd^{1,1}} & C^2(G, H^0(X_K, \calI_1)) \ar{r}{\partial^{2,1}}\ar{u}{\dd^{2,1}} & \dots \\
			C^0(G,H^0(X_K, \calI_0)) \ar{r}{\partial^{0,0}} \ar{u}{\dd^{0,0}} & C^1(G, H^0(X_K, \calI_0)) \ar{r}{\partial^{1,0}} \ar{u}{\dd^{1,0}} & C^2(G, H^0(X_K, \calI_0)) \ar{r}{\partial^{2,0}}\ar{u}{\dd^{2,0}} & \dots 
			\end{tikzcd}
		\end{equation} \smallskip

	To compute the cohomology of the total complex, we define modules
	\[ \Tot^{m} := \displaystyle \bigoplus_{i+ j = m} \Tot^{ij} = \displaystyle \bigoplus_{i+j = m} C^i(H^0(X_K, \calI_j))\]

	Let $\D^m \colon \Tot^m \to \Tot^{m+1}$ be defined by 
	\[\D^m(c^{ij}) = \dd^{j}(c^{ij}) + (-1)^j \partial^{i,j}(c^{ij})\]

	It is a standard computation that $D^{m+1} \circ D^m = 0$, so that $(\Tot^\bullet, \D^\bullet)$ forms a complex of $G$-modules. Now, $\calA \in \Br X$ is represented by a cocycle of the total complex 
	\[ (\alpha_0, \alpha_1, \alpha_2) \in C^0(G, H^0(X_K, \calI_2)) \oplus C^1(G, H^0(X_K, \calI_1)) \oplus C^2(G, H^0(X_K, \calI_0)) \]
	such that, $\dd^{0,2}(\alpha_0) = 0$, $\partial^{0,2}(\alpha_0) = -\dd^{1,1}(\alpha_1)$, $\partial^{1,1}(\alpha_1) = \dd^{2,0}(\alpha_2)$, and $\partial^{2,0}(\alpha_2) = 0$.

	Since $\calA \in \ker(\Br X \to \Br X_K)$, and $\Br X_K = \ker \dd^{0,2}/ \im \dd^{0,1}$, $\calA$ is represented by an $(\alpha_0, \alpha_1, \alpha_2)$ such that $\alpha_0 \in \im \dd^{0,1}$, say $\alpha_0 = \dd^{0,1}(\beta_0)$. We claim $\calA$ has a cocycle representative of the form $(0, \alpha_1', \alpha_2)$. That is, $(\alpha_0, \alpha_1 - \alpha_1', 0)$ is a coboundary for the total complex so is in the image of $D^1$. Indeed, $D^1(( \beta_0, 0)) = (\alpha_0, \partial^{0,1}(\beta_0), 0)$. Hence $\calA$ is represented by the 2-cocycle $(0, \alpha_1 - \partial^{0,1}(\beta_0), \alpha_2)$, which we shall rename $(0, \alpha_1, \alpha_2)$.

	Since (\ref{4termdiagram}) is a diagram of $G$-modules, we may form the complex
	\begin{equation}
		\begin{tikzcd}
			C^1(G,\calO(U_K)^\times) \ar{r}{\divv} \ar{d}{\psi} & C^1(G,\Div_{X_K \setminus U_K} X_K) \ar[->>]{r} \ar{d}{\varphi}& C^1(G,\Pic(X_K)) \ar[equal]{d} \\
			C^1(G,H^0(X_K, \calI_0)) \ar{r}{\dd^0} & C^1(G, \ker(\dd^1)) \ar[->>]{r} & C^1(G,\Pic(X_K)),
		\end{tikzcd}
	\end{equation}

	Since $\calA$ is a lift of $\Btilde$, we have that $\alpha_1$ is a lift of $\betatilde$ in the bottom row. By assumption, $\gammatilde$ is a lift of $\betatilde$ in the top row. Thus, $\alpha_1 - \varphi(\gammatilde)$ maps to $0$ in $C^1(G, \Pic X_K)$, hence is in the image of $d^{1,0}$, say $\alpha_1 - \varphi(\gammatilde)= d^{1,0}(\beta_1)$. It suffices to show that $(0, \alpha_1 - \varphi(\gammatilde), \alpha_2')$ is in the image of $D^1$. Indeed, $D^1( (\beta_1, 0)) = (0,\alpha_1 - \varphi(\gammatilde), \partial^{1,0}(\beta_1))$. Thus, $\calA$ has a cocycle representative of the form $(0, \varphi(\gammatilde), \alpha_2 - \partial^{1,0}(\beta_1))$, as desired.

	The condition to be a 2-cocycle simplifies in the case when $\alpha_0 = 0$ to the following: 
	\begin{eqnarray*}
		\dd^{1,1}(\alpha_1) &=& 0 \\
		\partial^{1,1}(\alpha_1) & = & \dd^{2,0}(\alpha_2) \\
		\partial^{2,0}(\alpha_2) & = & 0 
	\end{eqnarray*}

	Again, diagram (\ref{4termdiagram}) gives us the commutative square 
	\[ \begin{tikzcd} C^2(G, \calO(U_K)^\times) \ar{r}{\divv} \ar{d}{\psi} & C^2(G, \Div_{X_K \setminus U_K}(X_K)) \ar{d}{\varphi} \\
	C^2(G, H^0(X_K, \calI_0)) \ar{r}{\dd^{2,0}} & C^2(G, \ker \dd^{2,1})
	\end{tikzcd}
	\]

	We observe that since $\calA$ has a cocycle representative $(0, \varphi(\gammatilde), \alpha_2)$ that satisfies the above conditions,
	\begin{eqnarray*}
	\dd^{2,0}(\psi(\beta)) &=& \varphi(\divv(\beta)) \\
	& = & \varphi( \partial(\gammatilde ))\\
	& = & \partial^{1,1}(\varphi(\gammatilde)) \\
	& = & d^{2,0}(\alpha_2)
	\end{eqnarray*}

	Hence $\psi(\beta) - \alpha_2 \in \ker(\dd^{2,0}) = H^2(G, K^\times)$. Moreover, on $U$, $\varphi(\gammatilde)$ vanishes, thus the image of $\calA - B \in \Br U$ is represented in the total complex by $(0, 0, (\alpha_2 - \psi(\beta))|_U)$, hence by the above analysis is given by a 2-cocycle with coefficients in $\ker \dd = K^\times$i.e., an element of $\Br k$.

\end{proof}

As mentioned earlier, it is possible that one might need to extend the field $K$ to kill the obstruction to lifting given by $H^3(G,K^\times)$. The following lemma and its corollary characterize certain number fields for which this group is automatically trivial.

\begin{lemma} {\cite[Theorem 2.119]{Koch}} \label{prop: Koch} Let $L/k$ be an extension of number fields with Galois group $G$. Then $H^3(G, L^\times)$ is cyclic of order $[L:k]/ \lcm\{ [L_{w_v} : k_v] \mid v \in \Omega_k \}$. 
	\end{lemma}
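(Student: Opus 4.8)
The plan is to derive this from global class field theory, via the fundamental exact sequence of $G$-modules $0 \to L^\times \to \Adeles_L^\times \to C_L \to 0$, where $C_L := \Adeles_L^\times/L^\times$ is the id\`ele class group, together with the Tate--Nakayama theorem. Since $G$ is finite and every cohomology group in play sits in positive degree, I identify ordinary cohomology with Tate cohomology $\widehat{H}$ throughout. \textbf{Step 1 (the id\`ele class group).} Cup product with the global fundamental class $u_{L/k} \in H^2(G, C_L)$ yields isomorphisms $\widehat{H}^{n}(G, \Z) \xrightarrow{\sim} \widehat{H}^{n+2}(G, C_L)$ for all $n$. Hence $\widehat{H}^2(G, C_L) \isom \widehat{H}^0(G, \Z) = \Z/[L:k]\Z$, which via the global invariant map I identify with $\tfrac{1}{[L:k]}\Z/\Z \subset \Q/\Z$, while $\widehat{H}^3(G, C_L) \isom \widehat{H}^1(G, \Z) = \Hom(G, \Z) = 0$.

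\textbf{Step 2 (the long exact sequence).} The segment of the long exact Tate cohomology sequence of $0 \to L^\times \to \Adeles_L^\times \to C_L \to 0$ around degree $2$ reads
\[ \widehat{H}^2(G, \Adeles_L^\times) \xrightarrow{\rho} \widehat{H}^2(G, C_L) \To H^3(G, L^\times) \To \widehat{H}^3(G, \Adeles_L^\times) \To \widehat{H}^3(G, C_L) = 0, \]
so $H^3(G, L^\times)$ is an extension of $\widehat{H}^3(G, \Adeles_L^\times)$ by $\coker \rho$. \textbf{Step 3 (id\`elic cohomology via Shapiro).} Writing $\Adeles_L^\times$ as the restricted product over places $v$ of $k$ of the induced modules $\bigoplus_{w \mid v} L_w^\times \isom \Ind_{G_v}^G L_w^\times$, where $G_v = \Gal(L_w/k_v)$ is a decomposition group, and using that $H^n(G_v, \calO_{L_w}^\times) = 0$ for $v$ unramified and $n \geq 1$, Shapiro's lemma gives $\widehat{H}^n(G, \Adeles_L^\times) \isom \bigoplus_{v \in \Omega_k} \widehat{H}^n(G_v, L_w^\times)$ for $n \geq 1$. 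Local class field theory gives $\widehat{H}^2(G_v, L_w^\times) \isom \tfrac{1}{n_v}\Z/\Z$ with $n_v := [L_w:k_v]$, and local Tate--Nakayama gives $\widehat{H}^3(G_v, L_w^\times) \isom \widehat{H}^1(G_v, \Z) = 0$. Hence $\widehat{H}^3(G, \Adeles_L^\times) = 0$, so $H^3(G, L^\times) \isom \coker \rho$, and $\widehat{H}^2(G, \Adeles_L^\times) \isom \bigoplus_v \tfrac{1}{n_v}\Z/\Z$.

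\textbf{Step 4 (identifying $\rho$ and concluding).} By the compatibility of local and global invariant maps, the composite $\widehat{H}^2(G_v, L_w^\times) \to \widehat{H}^2(G, \Adeles_L^\times) \xrightarrow{\rho} \widehat{H}^2(G, C_L) \hookrightarrow \Q/\Z$ equals $\inv_v$ on each summand. Therefore the image of $\rho$ inside $\tfrac{1}{[L:k]}\Z/\Z$ is $\sum_v \tfrac{1}{n_v}\Z/\Z = \tfrac{1}{\lcm_v n_v}\Z/\Z$, using $\tfrac{1}{a}\Z + \tfrac{1}{b}\Z = \tfrac{1}{\lcm(a,b)}\Z$ and $n_v \mid [L:k]$. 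Consequently
\[ H^3(G, L^\times) \isom \Bigl(\tfrac{1}{[L:k]}\Z/\Z\Bigr)\big/\Bigl(\tfrac{1}{\lcm_v n_v}\Z/\Z\Bigr), \]
which is cyclic of order $[L:k]/\lcm\{[L_w:k_v] : v \in \Omega_k\}$, as claimed.

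\textbf{Main obstacle.} The substantive ingredients --- global and local Tate--Nakayama, the computation of $\widehat{H}^\bullet$ of the id\`ele class group, and the local--global compatibility of invariant maps --- are the core theorems of class field theory, which I would simply quote (indeed in the text we cite Koch for this precise statement). The only points needing genuine care in a fully written-out argument are the passage to the restricted direct sum in Step 3 (i.e.\ the vanishing of unit cohomology at unramified places, which ensures $\widehat{H}^\bullet(G,\Adeles_L^\times) = \bigoplus_v \widehat{H}^\bullet(G_v, L_w^\times)$) and the elementary lattice identity $\sum_v \tfrac{1}{n_v}\Z = \tfrac{1}{\lcm_v n_v}\Z$ used in Step 4.
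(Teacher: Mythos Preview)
Your argument is correct and is precisely the standard class-field-theoretic proof (Tate--Nakayama for $C_L$, Shapiro for $\Adeles_L^\times$, local--global compatibility of invariants). The paper does not give its own proof of this lemma at all --- it simply cites \cite[Theorem~2.119]{Koch} --- so there is nothing to compare beyond observing that your proposal reproduces the argument one finds in the cited reference.
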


	\begin{cor} \label{cor: Koch} If $L/k$ is an extension of number fields, and $v \in \Omega_k$ is a place of $k$ such that $[L:k] = [L_{w_v}:k_v]$, then $H^3(G, L^\times) = 0$. 
	\end{cor}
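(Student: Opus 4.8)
The plan is to read this off directly from Lemma \ref{prop: Koch} (Koch's formula). First I would record the elementary input that, since $L/k$ is Galois with group $G$, every local degree $[L_{w_v}:k_v]$ divides the global degree $[L:k]$: for each place $w_v$ of $L$ above $v$, the completion $L_{w_v}/k_v$ is Galois with group the decomposition subgroup $G_{w_v} \subseteq G$, so $[L_{w_v}:k_v] = |G_{w_v}|$ divides $|G| = [L:k]$.

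Next I would set $d := \lcm\{[L_{w_v}:k_v] \mid v \in \Omega_k\}$ and argue that $d = [L:k]$. On one hand, $d$ is the least common multiple of a family of divisors of $[L:k]$, hence is itself a divisor of $[L:k]$. On the other hand, by hypothesis there is a place $v$ with $[L_{w_v}:k_v] = [L:k]$, so $d$ is a multiple of $[L:k]$. A positive integer that both divides and is a multiple of $[L:k]$ must equal $[L:k]$, giving $d = [L:k]$.

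Finally, Lemma \ref{prop: Koch} says $H^3(G, L^\times)$ is cyclic of order $[L:k]/d$, which is now $[L:k]/[L:k] = 1$; hence $H^3(G, L^\times) = 0$, as claimed. I do not expect any real obstacle here: the only thing to be careful about is the divisibility bookkeeping in the middle step, namely invoking that $L/k$ is Galois so local degrees are orders of subgroups and that an lcm of divisors of $n$ is again a divisor of $n$. Everything else is immediate from the quoted theorem.
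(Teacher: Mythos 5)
Your proposal is correct and matches the paper's (implicit) argument: the corollary is read off directly from Lemma \ref{prop: Koch}, since the hypothesis forces the lcm of the local degrees to equal $[L:k]$, making the cyclic group of order $[L:k]/\lcm$ trivial. Your extra bookkeeping via decomposition subgroups is fine but not strictly needed, as the lemma already guarantees the lcm divides $[L:k]$ (the stated order must be an integer), so the single place of full local degree immediately gives $\lcm = [L:k]$.
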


We are now able to apply the tools from this section to give explicit 2-cocycle representatives of Brauer classes generating the Brauer groups of affine Ch\^atelet surfaces. 

\begin{theorem} \label{thm: BrX_explicitrep}
	Let $X$ be an affine Ch\^atelet surface as in Theorem \ref{thm: short version general dihedral} with $\Br X/\Br_0 X \isom \Z/n\Z$. Let $e_1$ be a root of $P(t)$ fixed by the automorphism $\sqrt{a} \mapsto -\sqrt{a}$.
	\begin{enumerate}
		\item If $P(t)$ has leading coefficient $c$ such that $N_{L/k}(c') = c$ then one may reduce to the case when $P(t)$ is monic, and in that case $\Br X/\Br_0 X$ is generated by
		\label{case: monic}
		\[ \Big(x + \sqrt{a}y, t- e_1,1\Big) \in H^2(G, \calO(U_K)^\times)\]
		\item If $P(t)$ has leading coefficient $-1$ then,
			\begin{enumerate}[(i)]
				\item If $\deg P(t)$ is odd, then the same representative as in (\ref{case: monic}) generates $\Br X/ \Br_0 X$.
				\item $X_m\colon x^2 + y^2 = -(t^4 - m)$, $m \in \Z_{\ge 0}$, has $\Br X_m/\Br_0 X_m$ generated by 
				\[ \Big(-2m(x + iy), e_1(t-e_1), (1+i)e_1\Big) \in H^2(G, \calO(U_K)^\times)\]
			\end{enumerate}
	\end{enumerate}
\end{theorem}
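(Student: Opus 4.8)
The plan is to identify, for each surface in the statement, an explicit $1$-cocycle generating $H^1(G,\Pic X_K)\isom\Br X/\Br_0 X$ (the isomorphism being Lemma~\ref{BrXAlg}), push it through the connecting homomorphisms of the exact sequences of Lemma~\ref{lem: 3exactseqs}, and invoke Proposition~\ref{prop: KT08 II}. Concretely: begin with a $1$-cocycle $\betatilde$ whose class generates $H^1(G,\Pic X_K)$ --- for the degree-$4$ surfaces $\calX_m$ this is the cocycle $(v,v')=((1,1,1),(0,1,0))$ of Example~\ref{example: XmBrauer}, and for general dihedral degree one extracts the analogous generator from the $G$-action on the $[D_{1,j}]$ recorded in the proof of Theorem~\ref{thm: short version general dihedral}. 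Lift $\betatilde$ along $\bigoplus_{i,j}\Z[D_{i,j}]\twoheadrightarrow\Pic X_K$ to a $1$-cochain $\gammatilde$, form $\partial\gammatilde\in Z^2(G,R)$ representing $\delta(\Btilde)$, and then lift $\partial\gammatilde$ along $\divv\colon\calO(U_K)^\times\twoheadrightarrow R$ to a $2$-cochain; since $\epsilon(\Btilde)=0$ one may correct this lift by a $2$-cochain valued in $K^\times$ to obtain a genuine $2$-cocycle $B$ with $\mu(B)=\delta(\Btilde)$, and Proposition~\ref{prop: KT08 II} then gives that the image of $B$ in $\Br U$ equals that of a generator $\calA$ of $\Br X/\Br_0 X$ up to a constant algebra. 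All the cochain complexes here are those of the efficient resolution $ER$ of Lemma~\ref{lem: KTdp2}, so a $2$-cochain is literally a triple of elements of $\calO(U_K)^\times$ and the cocycle condition is the vanishing of the dual of $\dd^3_n$; this is what makes the output the triples displayed in the theorem.

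For the monic case (and, by rescaling $x,y,t$, the case when the leading coefficient is a norm from $L/k$) the sequence~\eqref{UExact} is \emph{split}: taking $\rho=1\in k(e_1)^\times$ and $\xi=1\in k(\sqrt a)^\times$ in Claim~\ref{claim: splitting} gives $cN_{k(e_1)/k}(\rho)=1=N_{k(\sqrt a)/k}(\xi)$, so there is a $G$-equivariant section $\phi$ with $[t-e_i]\mapsto t-e_i$ and $[u_j]\mapsto u_j$. In particular the map $\nu$ in~\eqref{CohomDiagram} vanishes, so $\epsilon(\Btilde)=0$ automatically, and $B$ can be taken to be $\phi$ applied coordinatewise to $\delta(\Btilde)$ regarded in $H^2(G,\calO(U_K)^\times/K^\times)$. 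It then remains to run the snake lemma once, using $\divv(t-e_j)=D_{1,j}+D_{2,j}$, $\divv(u_j)=\sum_i D_{j,i}$ and the differentials $\dd^2_n,\dd^3_n$, to see that $\delta(\Btilde)$ is represented by the triple $([u_2],[t-e_1],0)$ up to a coboundary, whence $B=\bigl(x+\sqrt a\,y,\ t-e_1,\ 1\bigr)$, as claimed.

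For the odd-degree subcase, replacing the monic $P$ by $-P$ (equivalently $c=1$ by $c=-1$) changes the construction only through a $2$-torsion correction: tracking $c$ through Claim~\ref{claim: splitting} and the snake lemma, the class $B$ is altered by a cocycle whose class lies in the $2$-torsion of $H^2(G,\calO(U_K)^\times)$ (morally the contribution of the symbol $(-a,c)$). Since $\deg P=n$ is odd, $\Br X/\Br_0 X\isom\Z/n\Z$ has odd order, so this correction is trivial and the monic representative still generates; this is the only place oddness of $n$ enters.

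The genuinely computational part is the surface $X_m\colon x^2+y^2=-(t^4-m)$, where $L=\Q(i)$, $-1$ is not a norm from $\Q(i)/\Q$, so~\eqref{UExact} need not split, and the lift of $\delta(\Btilde)$ to $\calO(U_K)^\times$ must be produced explicitly. Here $G\isom D_4$, $e_1=\sqrt[4]{m}$, $N_{\Q(e_1)/\Q}(e_1)=-m$, $N_{\Q(i)/\Q}(1+i)=2$, and the single relation~\eqref{eqn: KUrelation} among the invertible functions reads $\prod_i(t-e_i)=(x+iy)(x-iy)$. Starting from the generator $\betatilde$ with $v=(1,1,1)=[D_{1,1}]+[D_{1,2}]+[D_{1,3}]$, $v'=(0,1,0)=[D_{1,2}]$ of Example~\ref{example: XmBrauer}, lift to $\gammatilde$, compute $\partial\gammatilde\in Z^2(G,R)$, pick $\beta$ with $\divv(\beta)=\partial\gammatilde$, observe that the residual $\partial\beta\in C^3(G,K^\times)$ represents $\epsilon(\Btilde)$, which vanishes (by Corollary~\ref{cor: Koch}, using that for these $m$ there is a rational prime with local degree $8$ in $K/\Q$, equivalently directly from the decomposition data for $H^3(D_4,K^\times)$), and finally modify $\beta$ by the forced element of $C^2(G,K^\times)$ to obtain a $2$-cocycle; this produces $\bigl(-2m(x+iy),\ e_1(t-e_1),\ (1+i)e_1\bigr)$, the scalars $-2m$, $(1+i)$ and $e_1$ being exactly the $K^\times$-valued adjustments dictated by the defining equation and by the requirement $\partial\beta=0$. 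I expect the main obstacle to be precisely this bookkeeping --- controlling the non-canonical choices in the two successive liftings and pinning down the $C^2(G,K^\times)$-correction --- rather than any conceptual difficulty; once the cocycle is in hand, Proposition~\ref{prop: KT08 II} yields the conclusion, and that the class is a genuine generator (not a coboundary) is inherited from the fact that $\betatilde$ generates $H^1(G,\Pic X_K)\isom\Z/4\Z$.
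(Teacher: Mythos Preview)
Your overall strategy coincides with the paper's: work in the efficient resolution, start from a generator of $H^1(G,\Pic X_K)$, lift to divisors, apply $\dd^2_n$ to obtain a triple representing $\delta(\Btilde)\in H^2(G,R)$, and then lift along $\divv$ to a triple in $(\calO(U_K)^\times)^3$, invoking Proposition~\ref{prop: KT08 II}. The paper's proof treats only case~(2)(ii) in detail and asserts the others follow similarly, without your separate splitting argument for the monic case or the parity argument for odd degree.

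There is, however, one substantive divergence. For $X_m$ you propose to start from the generator $(v,v')=((1,1,1),(0,1,0))$ of Example~\ref{example: XmBrauer}. The paper instead uses the \emph{other} generator listed there, the diagonal pair $(\tilde v,\tilde v')=([D_{1,1}],[D_{1,1}])$, and this choice is the whole computational point: its lift to divisors is the obvious $(D_{1,1},D_{1,1})$, and since the two components agree, the third coordinate of $\dd^2_n(D_{1,1},D_{1,1})$ is $N_{gh}(D_{1,1})-N_{gh}(D_{1,1})=0$. One then only has to find $f_1,f_2$ with $\divv(f_1)=N_g D_{1,1}=\sum_j D_{1,j}$ and $\divv(f_2)=N_h D_{1,1}=D_{1,1}+D_{2,1}$, i.e.\ $f_1\in K^\times\cdot u_j$ and $f_2\in K^\times\cdot(t-e_1)$, and adjust the $K^\times$-scalars (and $f_3\in K^\times$) so that the single cocycle relation $N_h(f_1)=N_g(f_2 f_3)$ holds. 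This is what produces the displayed triples directly. Starting from $(v,v')=((1,1,1),(0,1,0))$ your lift to divisors has unequal components, the third coordinate of $\dd^2_n$ is nonzero, and you will not arrive at the stated representatives without a further coboundary correction that you have not described. In the monic and odd-degree cases you implicitly switch to the diagonal generator when you write down $([u_2],[t-e_1],0)$, so there your computation is really the paper's; for $X_m$ you should do the same.

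Your use of the splitting from Claim~\ref{claim: splitting} in the monic case is a genuine simplification over the paper's ``similarly'' (it bypasses any $K^\times$-correction), and your parity remark for odd $n$ is morally correct though it would benefit from being stated as: in that case $-1=(-1)^n=N_{k(e_1)/k}(-1)=cN_{k(e_1)/k}(\rho)$ with $\rho=-1$ is still a norm equation solvable by $\xi=i$, so the splitting persists and no correction arises at all.
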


\begin{proof} We prove (2)(ii); the remaining cases follow similarly. As in Example \ref{example: XmBrauer}, one computes using the efficient resolution (\ref{effresZGmod}) that in each of the above cases, a generator for $H^1(G, \Pic X_K)$ is given by the pair $([D_{1,1}], [D_{1,1}]) \in (\Pic X_K)^{2}$. The next step is to lift this to a 1-cochain representative in $H^1(G, \Div_{X_K\setminus U_K}(X_K))$; one may take $(D_1, D_1) \in (\Div_{X_K\setminus U_K}(X_K))^2$. Next, map via the differential from the efficient resolution to obtain a 2-cocycle in $H^2(G, \Div_{X_K\setminus U_K}(X_K))$, which is represented by a triple in $(\Div_{X_K\setminus U_K}(X_K))^3$, given by
{\small 
\begin{eqnarray*} \begin{pmatrix} D_1, & D_1 \end{pmatrix} \begin{pmatrix} N_g & 0 & N_{gh} \\ 0 & N_h & -N_{gh} \end{pmatrix} & = &  \begin{pmatrix} N_g(D_1), & N_h(D_1), & N_{gh}(D_1) - N_{gh}(D_1) \end{pmatrix} \\
	& = & \begin{pmatrix} D_{1,1} + D_{1,2} + D_{1,3} + D_{1,4}, & D_{1,1} + D_{2,1}, & 0 \end{pmatrix}
	\end{eqnarray*}}
This determines a class in $H^2(G, R)$, represented by the same triple. The methods of this section suggest that to find the 2-cocycle representative that lifts this class to a class in $H^2(G, \calO(U_K)^\times)$, one must find functions on $U$ that satisfy the same cocycle relations arising from the efficient resolution. That is, we require functions $(f_1, f_2, f_3) \in \calO(U_K)^\times$ such that $\divv(f_1) = N_g([D_{1,1}])$, $\divv(f_2) = N_h([D_{1,1}])$ and $\divv(f_3) = N_{gh}([D_{1,1}] - [D_{1,1}])$. Thus in each case, one computes that the listed functions satisfy the desired properties. Additionally, the condition that $(f_1, f_2, f_3)$ defines a 2-cocycle in $H^2(G, \calO(U_K)^\times)$ means that $N_h(f_1) = N_g(f_2 f_3)$, and one also checks that these holds for the functions specified in the statement of the theorem.
\end{proof}

%%%%%%%%%%%%%%%%%%%%%%%%%%%%%%%%%%%%%%%%%%%%%%%%%%%%%%%%%%%%%%%%%%%%%%%%%%%%%%%%
%%%%%%%%%%%%%%%%%%		      Cocycle Lifting     	    %%%%%%%%%%%%%%%%%%%%%%%%
%%%%%%%%%%%%%%%%%%%%%%%%%%%%%%%%%%%%%%%%%%%%%%%%%%%%%%%%%%%%%%%%%%%%%%%%%%%%%%%%

\section{Effective cocycle lifting} \label{sec: cocycle_lifting}

In this section, we provide a procedure to evaluate local invariants of the non-cyclic algebras of \S\ref{section: explicit_classes}. First, we discuss more broadly an approach for computing the local invariant of a 2-cocycle representing a Brauer class over a local field.
Let $k_v$ be the completion of $k$ at $v$. Since $K/k$ is Galois, there exists only one place $w$ of $K$ extending $v$, up to Galois conjugacy. Denote the completion by $K_w$. Define $Q_1 := \Gal(K_w/k_v)$, and let $\varphi \in H^2(Q_1, K_w^\times)$ be a 2-cocycle.

For an infinite place, the Brauer group is either trivial or isomorphic to $\Z/2\Z$, hence to compute the invariant it suffices to determine whether the class vanishes.

If $v$ is finite, let $d$ dividing $|Q_1|$ satisfy $d [\varphi] =0$, e.g. $d = |Q_1|$ \cite{Serre}. Let $\kcycl$ be the unique unramified extension of degree $d$ up to isomorphism, effectively constructible as a cyclotomic extension, and let $v_d$ denote the corresponding valuation. Let $A = \kcycl \cdot K_w$ be the compositum of fields with Galois group $\Gtilde := \Gal(A/k_v)$. We obtain the following diagram of fields:
\begin{equation} \label{eqn: diagram_of_fields}
	\begin{tikzcd}
		{} & A \ar[dash,swap]{dl}{H_2} \ar[dash]{d}{H_1}  \ar[dash,swap]{ddl}{\Gtilde}\\
		\kcycl \ar[dash,swap]{d}{Q_2 \isom \Gtilde/H_2} & K_w \ar[dash]{dl}{Q_1 \isom \Gtilde/H_1}\\
		k_v & {}
	\end{tikzcd}
\end{equation}

This corresponds to the diagram in Galois cohomology:

\begin{equation} \label{eqn: inf_res_diag}
	\begin{tikzcd}
		{} & H^2(Q_2, (A^{H_2})^\times) \ar{d}{\inf_2} & {} \\
		H^2(Q_1, (A^{H_1})^\times) \ar{r}{\inf_1} & H^2(\Gtilde,A^\times) \ar{d}{\res_2} \ar{r}{\res_1} & H^2(H_1, (A |_{H_1})^\times)^{Q_1} \\
		{} & H^2(H_2, (A |_{H_2})^\times)^{Q_2} &{}
	\end{tikzcd} 
\end{equation}

By Hilbert's Theorem 90, the corresponding first Galois cohomology groups in the diagram vanish, thus we are left with the higher 5-term inflation-restriction sequences, of which the first three terms appear in the row and column of this diagram. Since $d [\varphi] = 0$, so too is $\res_2 \inf_1 [\varphi] = 0$. That is, 
\begin{equation} \label{eqn:infres}
	0 = d \inv_v (\textup{inf}_1 [\varphi]) = [\kcycl: k_v] \inv_v (\textup{inf}_1 [\varphi]) = \inv_v (\textup{res}_2 (\textup{inf}_1[\varphi])). 
\end{equation}

\begin{lemma} 
	Let $[\varphitilde]$ be the lift of $\inf_1 [\varphi]$ to $H^2(Q_2, \kcycl^\times)$. Then
	\[ 
		\inv_v[\varphi] =  \inv_v[\varphitilde] = \frac{1}{d} v_d \left( \varphitilde( \displaystyle \prod_{j=0}^{d-1} \rho^j, \rho) \right),
	\] 
	where $\rho \in Q_2$ induces Frobenius on the residue field, $\overline{\kcycl}$.
\end{lemma}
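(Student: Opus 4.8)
The plan is to transport the class along the two inflation maps of diagram~(\ref{eqn: inf_res_diag}) until it lives on the cyclic \emph{unramified} extension $\kcycl/k_v$, and then read off the invariant from the classical cyclic-algebra formula. First I would check that the lift $[\varphitilde]$ is well defined. Identify $H^2(Q_1,K_w^\times)$, $H^2(Q_2,\kcycl^\times)$ and $H^2(\Gtilde,A^\times)$ with the relative Brauer groups $\Br(K_w/k_v)$, $\Br(\kcycl/k_v)$ and $\Br(A/k_v)$ inside $\Br(k_v)$, so that $\inf_1$ and $\inf_2$ become inclusions. Since $\kcycl/k_v$ is the unramified extension of degree $d$, local class field theory gives $\Br(\kcycl/k_v)=\tfrac1d\Z/\Z$; and since $d[\varphi]=0$ by hypothesis, $\inf_1[\varphi]$ has order dividing $d$, hence lies in $\Br(\kcycl/k_v)=\im(\inf_2)$. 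Uniqueness of $[\varphitilde]$ is injectivity of $\inf_2$, whose kernel is a quotient of $H^1(H_2,A^\times)=0$ (Hilbert~90, as already noted above). Since $\inv_v$ is defined on all of $\Br(k_v)$ and is therefore compatible with inflation, this already yields $\inv_v[\varphi]=\inv_v(\inf_1[\varphi])=\inv_v(\inf_2[\varphitilde])=\inv_v[\varphitilde]$, the first equality.

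For the second equality I would compute $\inv_v$ directly on $H^2(Q_2,\kcycl^\times)$, where $Q_2=\langle\rho\rangle$ is cyclic of order $d$ and $\kcycl/k_v$ is unramified with $\rho$ the Frobenius. After replacing $\varphitilde$ by a cohomologous normalized cocycle, two standard ingredients combine. First, Tate periodicity for the cyclic group $Q_2$ gives an isomorphism $H^2(Q_2,\kcycl^\times)\xrightarrow{\sim}\widehat H^0(Q_2,\kcycl^\times)=k_v^\times/N_{\kcycl/k_v}(\kcycl^\times)$ carrying $[\varphitilde]$ to the class of $a:=\prod_{j=0}^{d-1}\varphitilde(\rho^j,\rho)$; a short cocycle computation shows $a$ is $Q_2$-fixed, so $a\in k_v^\times$ and the crossed-product algebra $(\kcycl,Q_2,\varphitilde)$ is the cyclic algebra $(\kcycl/k_v,\rho,a)$. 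Second, for an unramified cyclic extension with Frobenius $\rho$, the composite $H^2(Q_2,\kcycl^\times)\cong k_v^\times/N_{\kcycl/k_v}(\kcycl^\times)\to\tfrac1d\Z/\Z$ computing $\inv_v$ is the map induced by $x\mapsto\tfrac1d v(x)$, where $v$ is the normalized valuation of $k_v$; and because $\kcycl/k_v$ is unramified, $v$ is the restriction of $v_d$. Combining these gives $\inv_v[\varphitilde]=\tfrac1d v_d(a)$, which is the asserted formula.

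Everything here is classical (the periodicity isomorphism, the invariant of a cyclic algebra over a local field, compatibility of $\inv_v$ with inflation and restriction); see \cite{Serre}. The only part requiring genuine care is normalization bookkeeping: one must fix the sign conventions in the periodicity isomorphism so that the image of $[\varphitilde]$ is \emph{precisely} $\prod_{j=0}^{d-1}\varphitilde(\rho^j,\rho)$ rather than its inverse or a cyclically reindexed product, match this against the normalization of the local reciprocity map so that the coefficient comes out as $+\tfrac1d$, and confirm that this product genuinely lands in $k_v^\times$ so that $v_d$ may be applied to it directly. Passing at the outset to a normalized representative of $[\varphitilde]$ is what keeps these verifications short.
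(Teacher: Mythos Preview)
Your proposal is correct and follows essentially the same route as the paper. The paper's proof constructs the crossed-product algebra $\calA(\varphitilde)$ on the basis $(e_{\rho^i})$ and computes $e_\rho^{\,d}=\prod_{j}\varphitilde(\rho^j,\rho)\,e_{\id}$ directly from the multiplication rule, then reads off $\inv_v$ as $\tfrac1d\ord(e_\rho^{\,d})$; you phrase the same calculation through Tate periodicity $H^2(Q_2,\kcycl^\times)\xrightarrow{\sim}\widehat H^0(Q_2,\kcycl^\times)$ and the standard invariant formula for a cyclic algebra over an unramified extension, which is exactly the cohomological restatement of that crossed-product computation. Your treatment of the first equality (well-definedness of the lift and compatibility of $\inv_v$ with inflation) is more explicit than the paper's, which leaves it implicit in the surrounding discussion.
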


\begin{proof}
	We follow the construction of the crossed-product algebra in \cite{MilneCFT}. Let $\calA(\varphitilde)$ be the $\kcycl$ vector space with basis $(e_{\rho^i})_{0 \le i \le d-1}$ endowed with multiplication given by 
	\begin{enumerate}
		\item $\rho^i a = e_{\rho^i}ae_{\rho^i}^{-1}$ for all $a \in \kcycl^\times$
		\item \label{cprodalgmult} $e_{\rho^i}e_{\rho^j} = \varphitilde(\rho^i, \rho^j)e_{\rho^{i+j}}$ for all $0 \le i,j \le d-1$.
	\end{enumerate}
	Then $\inv_v[\varphitilde] = \inv_v(A (\varphitilde)) := \ord(e_\rho) \mod \Z$, where $\ord(\alpha)$ for $\alpha \in A(\varphitilde)$ is defined by 
	\[ 
		|\alpha|_v = \left(\frac{1}{\# \overline{\kcycl}}\right)^{\ord( \alpha)}.
	\]
	Since $\kcycl$ is an unramified extension of $k_v$,
	$\ord(e_\rho) \mod \Z = \frac{1}{d}\ord(e_\rho^d) \mod \Z = \frac{1}{d} v_d(e_{\rho}^d) \mod \Z$, and by (\ref{cprodalgmult}), we find
	\[
		\inv_v[\varphitilde] = \frac{1}{d} v_d(e_{\rho}^d) = \left( \varphitilde( \prod_{j=0}^{d-1} \rho^j, \rho) \right) \in {1/d\Z/\Z}
	\]
	as desired.
\end{proof}

The explicit computation of the local invariant thus relies on effective lifting of two-cocycles. Since $\inf_1 [\varphi] = \inf_2 [\varphitilde] \in H^2(\Gtilde,A)$, there exists a 1-cochain $\psi \in C^1(\Gtilde,A)$ satisfying the following conditions:
\begin{enumerate} \label{cocycle_conditions}
	\item $\inf_1(\varphi) = \partial \psi \cdot \inf_2(\varphitilde)$
	\item $\res_2(\partial \psi) = \res_2(\inf_1 (\varphi))$
	\item $\inf_2 (\varphitilde)(\rho^{k_1} \sigma_1, \, \rho^{k_2} \sigma_2) = \frac{(\inf_1 \varphi)(\rho^{k_1}\sigma_1,\,\rho^{k_2} \sigma_2)}{(\partial \psi)(\rho^{k_1} \sigma_1,\, \rho^{k_2} \sigma_2)}$ is independent of the choice of elements $\sigma_1, \sigma_2 \in H_2$.
	\item $\res_2(\psi) = \deltatilde$, where $\partial \deltatilde = \res_2 (\inf_1 (\varphi))$
\end{enumerate}

It will be of use to observe that \hyperref[cocycle_conditions]{(3)} expands to:

\begin{equation} \label{eqn: cocyclecond3} 
	\frac{(\inf_1 \varphi)( \rho^{k_1}\sigma_1,\, \rho^{k_2}\sigma_2)}{(\partial \psi)(\rho^{k_1} \sigma_1,\, \rho^{k_2} \sigma_2)} = \frac{ (\inf_1 \varphi)(\rho^{k_1}\sigma_1,\, \rho^{k_2}\sigma_2) \cdot \psi(\rho_1^{k_1}\sigma_1 \rho_2^{k_2}\sigma_2)}{(\rho^{k_1} \sigma_1)\left(\psi(\rho^{k_2} \sigma_2)\right) \cdot \psi(\rho^{k_1} \sigma_1)}. \vspace{.5 em}
\end{equation}

Thus, in order to compute the local invariants, it suffices to determine $\psi(\rho)$. Indeed,
\begin{equation} \label{eqn: psirho} 
	\displaystyle \prod_{j=0}^{d-1} \varphitilde(\rho^j, \rho) 
	= \prod_{j=0}^{d-1} \textup{inf}_2 (\varphitilde)(\rho^j, \rho)
	= \prod_{j=0}^{d-1} \frac{ (\textup{inf}_1 \varphi)(\rho^j, \rho) \cdot \psi(\rho^{j+1})}{\rho^j \psi(\rho) \cdot \psi(\rho^j)}
	= \prod_{j=0}^{d-1} \frac{ (\textup{inf}_1 \varphi)(\rho^j, \rho)}{ \rho^j(\psi(\rho))}
\end{equation}

This will depend on the structure of the Galois group $\Gtilde$, and we will detail some of the relevant cases below. First, however, we need a more general approach for determining whether a given 1- or 2-cocycle is a coboundary, and producing the corresponding 0- or 1-cochain. We give a particular application to the case of dihedral groups following \cite{Preu}.

\subsection{Dihedral groups} \label{subsec: dihedral}
Suppose that $G \isom D_n = \langle g,h \mid g^n = h^2 = (gh)^2 = 1 \rangle$ is the Galois group of some finite extension of fields, $K/k$. Let $K^\times$ be the usual $G$-module. Two-cocycles for $H^2(G,K^\times)$ are represented, using the efficient resolution of \S\ref{subsec: efficres}, by triples $(r,s,t)$ of nonzero elements $r \in K^{\langle g \rangle}$, $s \in K^{\langle h \rangle}$, and $t \in K^{\langle gh \rangle}$, satisfying $Nr = Ns Nt$, where each $N$ denotes the norm from the respective field down to $k$. Coboundaries are triples of the form $(r,s,t) = (N_g r', N_h s', N_{gh}(\frac{r'}{s'}))$, for $r',s' \in K^\times$. To determine whether a given 2-cocycle is a 2-coboundary, one first needs to map back to the standard resolution to obtain a 2-cocycle $f: G \times G \to K^\times$. This can be accomplished using the morphisms of chain complexes given in Lemma \ref{lemma: quasi_isom}. In particular, one computes that $(r,s,t)$ maps to $f \in H^2(G,K^\times)$ for the standard resolution such that
\begin{eqnarray*}
f(g^i, g^{i'} h^{j'}) = \begin{cases} r^{-1}, & \textup{ if } i + i' > n \\
1, & \textup{ otherwise }
\end{cases} \\
f(g^ih, g^{i'} h^{j'}) = \begin{cases} \frac{r}{g^{i-i'} s^{j'} \prod_{e = 1}^{i'} (g^{i-e}.t(g.s)) }, & \textup{if } i' > i \bigskip \\ 
\frac{1}{g^{i-i'} s^{j'} \prod_{e = 1}^{i'} (g^{i-e}.t(g.s))}, & \textup{ otherwise} \end{cases}
\end{eqnarray*}

If $f$ is cohomologically trivial, then one can apply the methods of \cite[Prop 4.1]{Preu} to obtain the 1-cocycle lift $f' \in H^1(G,K^\times)$, 
\begin{equation} \label{eqn:1cocyclelift} f'(g^i h^j) = g^ih.(s')^{-j} \prod_{e = 0}^{i-1} g^e.r'^{-1} \end{equation}

\subsection{Direct Product of Groups} 
Let us now assume that $K_w/k_v$ is a dihedral Galois extension of degree $2n$, with Galois group $Q_1 = D_n$. Let $Q_2$ be the Galois group of the degree $d$ extension $\kcycl/k_v$, isomorphic to the cyclic group of order $d$. Suppose further that $\kcycl \cap K_w = k_v$ and that $\Gtilde \isom H_1 \times H_2$. The groups have the following presentations:
\begin{eqnarray*}
	H_1 & = & \langle a \mid a^d = 1 \rangle \isom Q_2\\
	H_2 & = & \langle b,c \mid b^2 = c^n = (cb)^2 = 1 \rangle \isom Q_1 \\
	\Gtilde  & = & \langle a,b,c \mid a^d = b^2 = c^n = (cb)^2 = [a,c] = [a,b] = 1 \rangle
\end{eqnarray*}
Our starting data will be a triple $(r,s,t) \in H^2(Q_1, K_w^\times)$, arising as the specialization of the $\calA \in H^2(G, \calO(U_K)^\times)$ to a point $P_v \in X(k_v)$. As above, this 2-cocycle representative for the efficient resolution can be pulled back to $\varphi \in H^2(Q_1,K_w^\times)$ for the standard resolution. Moreover, we observe that the composition $\res_2 \circ \inf_1$ applied to $(r,s,t)$ yields a 2-cocycle $H^2(H_2, A)$ which may also be written in terms of the efficient resolution as $(r,s,t)$, just now viewed as elements of $A$ instead of $K_w^\times$. However, by equation (\ref{eqn:infres}), this must be cohomologically trivial, hence by the discussion above, there exist $r', s' \in A$ such that $N_c(r') = r$ and $N_b(s') = s$. Furthermore, after mapping to the standard resolution, this is the lift of a 1-cocycle $\deltatilde \in H^1(H_2,A)$, defined by equation (\ref{eqn:1cocyclelift}), i.e. $\deltatilde(c^i b^j) = c^ib.(s')^{-j} \prod_{e = 0}^{i-1} c^e.r'^{-1} $.

In this case, since $\Gtilde$ is a direct product, $\inf_1 \varphi(a^{k_1} \sigma_1 , \, a^{k_2} \sigma_2) = \inf_1 \varphi( \sigma_1 , \, \sigma _2)$. Hence, together with property \hyperref[cocycle_conditions]{(2)}, equation (\ref{eqn: cocyclecond3}) further expands to:

\[ 
	\frac{(\inf_1 \varphi)( a^{k_1}\sigma_1,\, a^{k_2}\sigma_2)}{(\partial \psi)(a^{k_1} \sigma_1,\, a^{k_2} \sigma_2)} = \frac{ \sigma_1(\psi(\sigma_2))\cdot \psi(\sigma_1) \cdot \psi(a^{k_1 + k_2} \sigma_1 \sigma_2)}{ \psi(\sigma_1 \sigma_2) \cdot (a^{k_1} \sigma_1)\left(\psi(a^{k_2} \sigma_2)\right) \cdot \psi(a^{k_1} \sigma_1)} \vspace{.5 em}
\]

Taking, for example, $\sigma_1 = \sigma$, $\sigma_2 = \id$, $k_1 = 1$, $k_2 = 0$, we obtain:

\[ 
	\frac{\sigma (\psi(\id)) \cdot \psi(\sigma) \cdot \psi(a \sigma)}{\psi(\sigma)\cdot (a \sigma)(\psi(\id)) \cdot \psi(a \sigma)} = 1 \vspace{.5 em}
\]

Since this quantity is independent of the choice of $\sigma_1, \sigma_2$, letting $\sigma_1 = \id$ and $\sigma_2 = \sigma$ instead, we obtain:

\begin{equation} \label{eqn: psi_a_sigma_1} 
	1 = \frac{\psi(\sigma) \psi(a\sigma)}{\psi(\sigma) a(\psi(\sigma)) \psi(a)} = \frac{\psi(a \sigma)}{a(\psi(\sigma)) \cdot \psi(a)} \vspace{.5 em} 
\end{equation}

Similarly, taking $k_1 = 0$, $k_2 = 1$, $\sigma_1 = \id$, $\sigma_2 = \sigma$, equation (\ref{eqn: cocyclecond3}) simplifies to: 

\[ 
	\frac{\psi(\sigma) \cdot \psi(\id) \cdot \psi(a \sigma)}{\psi(\sigma) \cdot \psi(a\sigma) \cdot \psi(\id)} = 1 \vspace{.5 em}
\]

And, again, since this quantity is independent of $\sigma_1, \sigma_2$, choosing $\sigma_1 = \sigma$ and $\sigma_2 = \id$ instead yields: 

\begin{equation} \label{eqn: psi_a_sigma_2}
	1 = \frac{\psi(\sigma) \cdot \psi(a \sigma)}{\psi(\sigma) \cdot \sigma(\psi(a)) \cdot \psi(\sigma)} = \frac{\psi(a\sigma)}{\sigma(\psi(a))\cdot \psi(\sigma)} \vspace{.5 em}
\end{equation}

Together, equations (\ref{eqn: psi_a_sigma_1}) and (\ref{eqn: psi_a_sigma_2}) give:

\begin{equation} \label{eqn: psi_a_sigma} 
	\frac{\sigma(\psi(a))}{\psi(a)} = \frac{a\left(\psi(\sigma)\right)}{\psi(\sigma)},\, \hspace{.5 em} \textup{for each } \sigma \in H_2.
\end{equation}

Thus, $\psi(a)$ is determined by a Hilbert 90 condition, which can be solved explicitly. Indeed, by property \hyperref[cocycle_conditions]{(4)}, for $\sigma = c^i b^j$
\[ 
	\psi(\sigma) = \deltatilde(c^i b^j) = c^i.(s'^{-j}) \prod_{e =0}^{i-1} c^e.(r'^{-1}). 
\]

Moreover, in this case $(\inf_1 \varphi)(a^j,a) = 1$, hence by equation (\ref{eqn: psirho}), we find
\[ 
	\inv_v(\varphitilde) = -\frac{1}{d} v_d \left( \prod_{j=0}^{d-1} a^j (\psi(a)) \right).
\]

\subsection{Semidirect Products}
Our analysis in this case is similar. Suppose that the groups now have the following presentations: 
\begin{eqnarray*}
	\Gtilde & = & \langle b,c \mid c^n = b^d = 1 \mid bc = c^{-1} b \rangle \\
	H_1 & = &  \langle b^2 \rangle \\
	H_2  & = &  \langle c \rangle \\
	Q_1 & = & \langle g, h \mid g^n = h^2 = (gh)^2 = 1 \rangle,
\end{eqnarray*}
with $c \equiv g \mod H_1$ and $b \equiv h \mod H_1$.

As before, we begin with a 2-cocycle representative $(r,s,t) \in H^2(Q_1, K_v^\times)$. In this case, an application of $\res_2 \circ \inf_1$ yields a class in $H^2(H_2,A)$ which is represented by the single element $r \in A$, arising from the efficient resolution for cyclic groups. In particular, since this class is again cohomologically trivial, we obtain the 1-cocycle lift $\deltatilde: H_2 \to A$ such that
\[ \deltatilde(c^i) = \prod_{e = 0}^{i-1} c^e.r'^{-1},\]
where $r' \in A$ is such that $N_c(r') = r$. 

In this case, we shall be interested in determining $\psi(b)$. We shall need the following:
\begin{multicols}{2}
    \begin{itemize}
        \item $(\textup{inf}_1 \varphi)(1,b) =  \varphi(1,h)  = 1$
        \item $(\textup{inf}_1 \varphi)(b,1) =  \varphi(h,1)  = 1$
        \item $(\textup{inf}_1 \varphi)(b,b) =  \varphi(h,h)  = s^{-1}$
        \item $(\textup{inf}_1 \varphi)(c,b) = \varphi(g,h)  = 1$
        \item $(\textup{inf}_1 \varphi)(cb,b)=  \varphi(gh,h) = c(s)^{-1} $
        \item $(\textup{inf}_1 \varphi)(cb,c) =  \varphi(1,h) = (t c(s))^{-1}$
        \item $\psi(c^j) = \deltatilde(c^j) = \prod_{e=0}^{j-1} c^e.(r'^{-1})$
    \end{itemize}
\end{multicols}

%\begin{eqnarray*}
%(\textup{inf}_1 \varphi)(1,b) & = &   \varphi(1,h)  = 1  \\
%(\textup{inf}_1 \varphi)(b,1) & = &  \varphi(h,1)  = 1 \\
%(\textup{inf}_1 \varphi)(b,b) & = &  \varphi(h,h)  = s^{-1} \\
%(\textup{inf}_1 \varphi)(c,b) & = &  \varphi(g,h)  = 1 \\
%(\textup{inf}_1 \varphi)(cb,b)& = &  \varphi(gh,h) = c(s)^{-1}% \\
%(\textup{inf}_1 \varphi)(cb,c) & = &  \varphi(1,h) = (t c(s))^%{-1} \\
%\psi(c^j) & = & \deltatilde(c^j) = \prod_{e=0}^{j-1} c^e.(r'^{%-1}) \\
%\end{eqnarray*}

Then equation (\ref{eqn: cocyclecond3}) yields:
\[ 
	\frac{(\textup{inf}_1 \varphi)(cb,c)}{(\partial \psi)(cb,c)} = \frac{(\textup{inf}_1 \varphi)(b,1)}{(\partial \psi)(b,1)} = 1 \implies (\textup{inf}_1 \varphi)(cb,c) =  \frac{c(\psi(cb)) \psi(c)}{\psi(b)}.
\]

Similarly,
\[ 
	\frac{(\textup{inf}_1 \varphi)(c,b)}{(\partial \psi)(c,b)} = \frac{(\textup{inf}_1 \varphi)(1,b)}{(\partial \psi)(1,b)} = 1 \vspace{.5 em}
\]

And again, substitution and simplification gives:
\[ 
	\psi(cb) = b(\psi(c)) \cdot \psi(b). 
\]

Thus, we obtain
\[ 
	(\textup{inf}_1 \varphi)(cb,c) = \frac{cb(\psi(c)) \cdot \psi(c) \cdot c(\psi(b))}{\psi(b)}. 
\]

Hence,
\begin{equation} 
	\frac{c (\psi(b))}{\psi(b)} = \frac{r' cb(r')}{t c(s)},
\end{equation}
and indeed, the right hand side of the equation is norm 1 under $c$, so $\psi(b)$ can be found via an application of effective Hilbert 90.

Finally, in this case, $(\inf_1 \varphi)(b^{2\ell+1},b) = s^{-1}$, and $(\inf_1 \varphi)(b^{2\ell},b) = 1$, for $\ell \in \Z$. Thus, the invariant is given by equation (\ref{eqn: psirho}), that is
\[ 
	\inv_v [\varphitilde] = \frac{1}{d} v_d \left( \prod_{j=0}^{d-1} \frac{s^\frac{d}{2}}{b^j(\psi(b))} \right).
\]

\subsection{Computing Local Invariants}
We return briefly to the setting when $\Gtilde$ is a direct product of groups.
\subsubsection{Calibrating $r'$ and $s'$}
Given a triple $(r,s,t) \in (A^\times)^3$ representing a 2-coboundary in $H^2(D_n, A^\times)$ such that $N_b(r) = N_c(st)$,  $N_c(r') = r$, and $N_b(s') = s$, it may be possible that the choice of $r'$ and $s'$ must be modified by constants so that $N_{cb}\left (\frac{r'}{s'} \right) = t$. We describe here how to find such constants. Let $\lambda := \frac{t}{N_{cb}\left(\frac{r'}{s'} \right)}.$
It suffices to find a $\chi := \frac{\chi_r}{\chi_s} \in A^\times$ such that
\vspace{-1 em}
\begin{multicols}{3}
	\begin{enumerate}
		\item $r'' := \displaystyle \frac{r' \cdot c(\chi_r)}{\chi_r}$ \vspace{.5 em}
		\item $s'' := \displaystyle \frac{s' \cdot b(\chi_s)}{(\chi_s)}$ \vspace{.5 em}
		\item $ N_{cb}\left( \frac{\frac{c(\chi_r)}{\chi_r}}{\frac{b(\chi_s)}{\chi_s}} \right) = \lambda$  
	\end{enumerate}
\end{multicols}

The last condition is equivalent to
\[ 
	\frac{c \left(\frac{b(\chi)}{\chi}\right)}{\left(\frac{b(\chi)}{\chi}\right)} = \lambda.
\]
Thus, we first solve for $u_\lambda$ such that $\frac{c(u_\lambda)}{u_\lambda} = \lambda$. This is done via effective Hilbert 90 for the extension $A/A^{\langle c \rangle}$. That is,
\[ 
	u_\lambda^{-1} = 1 + \lambda + \lambda \cdot c(\lambda) + \dots + \prod_{j=0}^{n-1} c^j(\lambda)
\]

If $N_b(u_\lambda) = 1$, then we are done, via another appliction of effective Hilbert 90 for the extension $A/A^{\langle b \rangle}$. Otherwise, we must find $\nu_\lambda$ such that $\frac{c(\nu_\lambda)}{\nu_\lambda} = 1$ (i.e. $\nu_\lambda \in A^{\langle c \rangle}$) and $N_b(\nu_\lambda) N_b(u_\lambda) = 1$. Then, 
\[ 
	\frac{c\left(u_\lambda \nu_\lambda\right)}{u_\lambda \nu_\lambda} = \lambda.
\]
And since $N_b(u_\lambda \nu_\lambda) = 1$, $\chi$ is found via effective Hilbert 90:
\[ 
	\chi^{-1} := 1 + u_\lambda \nu_\lambda,
\]
so that $\frac{b(\chi)}{\chi} = u_\lambda \nu_\lambda$, as desired. 

\subsubsection{Computing $\psi(a)$} 

Finally, $\psi(a)$ must satisfy equation \ref{eqn: psi_a_sigma} for each $\sigma \in H_2$, hence in particular for the generators $b,c$. Thus, again, we employ an iterative effective Hilbert 90 as follows. Let $u_r \in A^\times$ be such that $c(u_r)/u_r = a(r')/r' = c(\psi(a))/\psi(a)$. To find $\psi(a)$ it suffices to find an element $\mu$ fixed by c, such that $b(\mu u_r)/(\mu u_r) = a(s')/s' = b(\psi(a))/\psi(a)$. Then,
\[
	\frac{b(\mu)}{\mu} = \frac{a(s')b(u_r)}{s'u_r},
\]
hence, taking $\mu$ to be $\mu:= 1 + \frac{a(s')b(u_r)}{s'u_r}$ suffices, and we find that $\psi(a) = u_r \mu$. The cocycle lifting algorithm has been implemented in $\magma$.

\subsection{Modifying by a base point}
As noted in \cite{Bright}, an oft used strategy for proving that a class $\calA \in \Br X$ of order $n$ gives no obstruction to the Hasse principle is to demonstrate the existence of a finite place $v$ of $k$ such that the evaluation map $X(k_v) \to (\Br k_v)[n]$, sending a point $ P \in Y(k_v)$ to the evaluation $\calA(P) \in (\Br k_v)[n]$, is surjective. In that case, any adelic point in $X(\A_k)$ can be modified at the place $v$ to produce an adelic point orthogonal to $\calA$.

This strategy is complicated by the fact that surjectivity can be destroyed by simply changing $\calA$ by a constant algebra in $\Br k$. Adding a constant algebra of arbitrarily large order can increase the order of $\calA$, yet the image of the evaluation map is translated hence stays the same size. Thus, to compensate for this deficiency, one can fix a base point $P \in X(k_v)$, and instead consider the modified evaluation map $X(k_v) \to \Br k_v$ defined by $P' \mapsto \calA(P') - \calA(P)$. This evaluation map only depends on the class of $\calA$ modulo constant algebras. If the class of $\calA$ in $\Br X / \Br k$ has order $n$ and the evaluation map surjects onto $(\Br k_v)[n]$, then $\calA$ gives no obstruction to the Hasse principle.

In the next section we will produce examples of surfaces which fail to have a Brauer-Manin obstruction due to surjectivity of an invariant at a single place. The relevant Brauer classes are defined only up to a constant algebra which is not easily computed, and thus modification by a base point is a useful tool in practice.

%%%%%%%%%%%%%%%%%%%%%%%%%%%%%%%%%%%%%%%%%%%%%%%%%%%%%%%%%%%%%%%%%%%%%%%%%%%%%%%%
%%%%%%%%%%%%%%%%%%				  Examples     		    %%%%%%%%%%%%%%%%%%%%%%%%
%%%%%%%%%%%%%%%%%%%%%%%%%%%%%%%%%%%%%%%%%%%%%%%%%%%%%%%%%%%%%%%%%%%%%%%%%%%%%%%%

\section{Examples} \label{sec: concrete examples}

\subsection{Insufficiency of the integral Brauer Manin Obstruction}
Finally, we return to the case of affine Ch\^atelet surfaces $X_m/\Q: x^2 + y^2 + t^4 = m$.

\begin{theorem}{\cite[Thm 8.1]{CTSSD87b}} \label{thm: CTSSD87 HP}
	Let $k$ be a number field, and let $Y$ be a smooth proper model of a Ch\^atelet surface (\ref{eqn: Xoverk}), then the Brauer-Manin obstruction is the only one to the Hasse principle on $Y$. Moreover, if $P(t)$ is an irreducible polynomial (of degree 4), the Hasse principle holds for $Y$.
\end{theorem}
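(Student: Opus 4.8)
The statement is the central theorem of \cite{CTSSD87b}; what follows is a sketch of the Colliot-Th\'el\`ene--Sansuc--Swinnerton-Dyer strategy, since nothing in the present paper shortcuts it. Throughout take $\deg P \in \{3,4\}$, so that the associated conic bundle has exactly four geometric degenerate fibres (the roots of $P$, together with $\infty$ when $\deg P = 3$).

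\textbf{Step 1: descent reduction.} The surface $Y$ is a conic bundle over $\PP^1_k$ via $t$, hence geometrically rational; in particular $\Br\Ybar = 0$, so $\Br Y = \Br_1 Y$, and $\Pic\Ybar$ is a finitely generated torsion-free $G_k$-module whose structure I would read off from the four degenerate fibres, each a union of two lines. Since $Y$ is proper (so $\overline k[Y]^\times = \overline k^\times$) and $\Ybar$ is rational, universal torsors exist, and the Colliot-Th\'el\`ene--Sansuc descent formalism shows that $Y(\A_k)^{\Br} = Y(\A_k)^{\Br_1}$ equals the union, over the finitely many everywhere-locally-soluble twists $[\mathcal T]$ of the universal torsor, of the images of $\mathcal T(\A_k)$ in $Y(\A_k)$. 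A universal torsor $\mathcal T$ is geometrically rational with $\Pic\overline{\mathcal T} = 0$, hence carries no Brauer obstruction, and $\mathcal T(k) \ne \emptyset$ implies $Y(k) \ne \emptyset$. So the theorem reduces to: \emph{every everywhere locally soluble twisted universal torsor of $Y$ has a $k$-point.}

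\textbf{Step 2: explicit torsors.} I would then write these torsors down concretely. Grouping the factors of $P(t)$ into conjugate quadratic pieces and using the defining equation $x^2 - a y^2 = c\prod_i (t-e_i)$, one finds the torsors are (open subvarieties of) affine varieties cut out by a norm equation from $k(\sqrt a)$ coupled with equations presenting each quadratic piece as a constant times a product of linear forms in new coordinates — concretely, intersections of two quadrics in a projective space. These again fibre over an affine line, and the fibres are conics, to which Hasse--Minkowski applies.

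\textbf{Step 3: the analytic core --- the main obstacle.} It remains to produce a $k$-rational value of the fibration parameter for which the corresponding conic fibre is soluble at \emph{every} place of $k$ simultaneously; by Hasse--Minkowski that fibre then has a $k$-point. This is exactly the kind of statement for which one normally invokes Schinzel's hypothesis, and carrying it out unconditionally for this family is the deepest part of \cite{CTSSD87b}. The plan: use strong approximation to pin down the parameter at the finitely many ``bad'' places --- the archimedean places, the places above $2ac$, and the places of bad reduction of the torsor --- so that the fibre is locally soluble there; at every other place the conic can fail to be soluble only at primes dividing a certain polynomial value $N(\mathbf u_0)$ in the parameter, and one must force the Hilbert symbols these primes contribute to cancel (e.g. so that each such prime splits in $k(\sqrt a)$). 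This is achieved by a counting argument: the parameter ranges over a suitable box, and equidistribution of Frobenius (Chebotarev) together with upper and lower bounds for the number of admissible values produces parameters with the required splitting behaviour. Making this cancellation work so that no Schinzel-type input survives is the long and hard step.

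\textbf{Step 4: the ``moreover''.} If $P(t)$ is irreducible of degree $4$, then on $Y$ the function $cP(t) = N_{k(\sqrt a)/k}(x + \sqrt a\, y)$ is a norm, so the quaternion class $\big(a, P(t)\big)$ is constant in $\Br \kk(Y)$. Feeding this into the $G_k$-module structure of $\Pic\Ybar$ --- in which the four degenerate fibres are permuted transitively (as $P$ is irreducible) and the two components of each fibre are interchanged by $\Gal(k(\sqrt a)/k)$ --- a group-cohomology computation in the spirit of \S\ref{sec: enumeration_of_BrX}, but for the proper model, gives $H^1(k, \Pic\Ybar) = 0$. Hence $\Br Y = \Br_0 Y$, so $Y(\A_k)^{\Br} = Y(\A_k)$, and Step 1 yields the unconditional Hasse principle for $Y$.
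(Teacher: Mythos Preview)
The paper does not prove this statement; it is quoted from \cite{CTSSD87b} as a black box and invoked only to ensure that the smooth proper compactifications of the surfaces $X_m$ satisfy the Hasse principle, so that $X_m(\Q)\neq\emptyset$ whenever $X_m$ is everywhere locally soluble. There is nothing in the present paper to compare your sketch against.

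That said, your outline is a faithful high-level summary of the Colliot-Th\'el\`ene--Sansuc--Swinnerton-Dyer argument: descent to universal torsors, identification of those torsors with (open pieces of) intersections of two quadrics, and an unconditional Hasse principle for the latter. One sharpening of your Step~3 is worth recording: the analytic core in \cite{CTSSD87a} is not a general box-counting sieve with Chebotarev equidistribution, but a more structured argument resting on Dirichlet's theorem on primes in arithmetic progressions. What makes this possible is that the intersections of two quadrics arising as torsors contain a pair of skew conjugate lines over $k(\sqrt a)$, hence carry a pencil of conics with a section after base change to $k(\sqrt a)$; this extra structure is precisely what lets one dispense with any Schinzel-type hypothesis. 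Your Step~4 is correct: for $P$ irreducible of degree~$4$ one has $H^1(k,\Pic\Ybar)=0$, so $\Br Y=\Br_0 Y$ and the main theorem gives the Hasse principle outright.
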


\begin{cor}{\cite[Cor. 8.12]{CTSSD87b}} \label{cor: CTSSD87 Xm}
	Let $m$ be a natural number. Write $m = 2^{4r +i}n$ with $r$ integral, $i$ integral and $0 \le i \le 3$, and $n$ an odd integer. Then $m$ can be written $m = u^2 + v^2 + w^4$ with $u,v,w \in \Q$ if and only if $m$ does not fall in one of the following cases:
	\begin{enumerate}[(i)]
		\item $i = 0$, and $n \equiv 7 \pmod{8}$
		\item $i =2$, and $n \equiv 3 \pmod{4}$
	\end{enumerate}
\end{cor}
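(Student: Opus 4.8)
The plan is to reduce the rational solvability of $m = u^2 + v^2 + w^4$ to local solvability of $X_m\colon x^2+y^2+t^4 = m$ at every place, and then to carry out the local analysis place by place, the only delicate place being $v = 2$. First suppose $m$ is not a perfect square; then $P(t) = m - t^4$ is irreducible of degree $4$ over $\Q$ (the only potential factorizations of $t^4 - a$ are ruled out since $m$ is not a square and $-4m < 0$ cannot be a fourth power), so Theorem~\ref{thm: CTSSD87 HP} applies to a smooth proper model $Y_m$ of $X_m$ and gives the Hasse principle for $Y_m$. Since a Ch\^atelet surface with a rational point is $\Q$-rational, and with a local point is $\Q_v$-unirational, one has $Y_m(\Q) \ne \emptyset \iff X_m(\Q) \ne \emptyset$ and $Y_m(\Q_v) \ne \emptyset \iff X_m(\Q_v) \ne \emptyset$ for each $v$ (a Zariski-dense set of points cannot be confined to the proper closed complement $Y_m \setminus X_m$). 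Hence, for $m$ not a square, $m = u^2+v^2+w^4$ in $\Q$ if and only if $X_m(\Q_v) \ne \emptyset$ for all $v$. If $m = c^2$ is a square, then $m = c^2 + 0^2 + 0^4$ is trivially solvable, and one checks directly that a square never falls into case (i) or (ii) (its $2$-adic valuation is even, and the odd part is a square, hence $\equiv 1 \pmod 8$), so the statement holds for squares as well.

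\textbf{The archimedean and odd places.} Since $m > 0$, the point $(\sqrt m, 0, 0)$ lies in $X_m(\R)$. For an odd prime $p$, choose $t = 0$ if $p \nmid m$ and $t$ a $p$-adic unit if $p \mid m$; in either case $m - t^4$ is a $p$-adic unit, and the equation $x^2 + y^2 = c$ is solvable over $\Q_p$ for any unit $c$ (when $p \equiv 1 \pmod 4$ the form $x^2+y^2$ is isotropic, hence universal; when $p \equiv 3 \pmod 4$ a unit has even valuation, hence is a norm from the unramified quadratic extension $\Q_p(\sqrt{-1})$). Thus $X_m(\Q_p) \ne \emptyset$ for every odd $p$, and only $v = 2$ can obstruct.

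\textbf{The prime $2$ --- the crux.} The substitution $(x,y,t) \mapsto (2^{-2r}x, 2^{-2r}y, 2^{-r}t)$ gives a bijection $X_m(\Q_2) \to X_{2^i n}(\Q_2)$, so one may assume $m = 2^i n$ with $0 \le i \le 3$ and $n$ odd. The key elementary input is that for $c \in \Q_2^\times$, the equation $x^2 + y^2 = c$ is solvable over $\Q_2$ (equivalently $c$ is a norm from $\Q_2(\sqrt{-1})$) if and only if, writing $c = 2^a u$ with $u \in \Z_2^\times$, one has $u \equiv 1 \pmod 4$; and $c = 0$ is trivially a sum of two squares. So $X_m(\Q_2) \ne \emptyset$ if and only if some $t \in \Q_2$ makes $m - t^4$ of this shape. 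One then proceeds by cases on $i$: for $i = 1$ take $t = 1$, so $m - 1 = 2n - 1 \equiv 1 \pmod 4$; for $i = 3$ take $t = 0$ when $n \equiv 1 \pmod 4$ and $t = 2$ when $n \equiv 3 \pmod 4$, since then $8n - 16 = 8(n-2)$ with $n - 2 \equiv 1 \pmod 4$; for $i = 0$ take $t = 0$ when $n \equiv 1 \pmod 4$ and $t = 1$ when $n \equiv 3 \pmod 8$, since then $n - 1 = 2u$ with $u = (n-1)/2 \equiv 1 \pmod 4$; for $i = 2$ take $t = 0$ when $n \equiv 1 \pmod 4$. These exhaust the solvable cases. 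For the two remaining configurations, $(i = 0,\ n \equiv 7 \pmod 8)$ and $(i = 2,\ n \equiv 3 \pmod 4)$, one shows no $t \in \Q_2$ works by splitting on $j = v_2(t)$: a unit fourth power satisfies $t^4 \equiv 1 \pmod{16}$, so for $j = 0$ the class of $m - t^4$ modulo $16$ is pinned down and its unit part is $\equiv 3 \pmod 4$; for $j \ge 1$ and for $j \le -1$ one has $v_2(m - t^4) = \min(v_2(m), 4j)$ and the unit part of $m - t^4$ is (up to sign) a unit $\equiv 1 \pmod{16}$, hence again $\equiv 3 \pmod 4$. In all subcases $m - t^4$ is not a sum of two squares over $\Q_2$, so $X_m(\Q_2) = \emptyset$.

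\textbf{Conclusion and main obstacle.} Assembling the three cases, for $m$ not a square we get $m = u^2+v^2+w^4$ in $\Q \iff X_m(\Q_2) \ne \emptyset \iff m$ is not of the two excluded $2$-adic shapes, which is exactly the statement; together with the trivial square case this proves the corollary. The genuine work is the $2$-adic computation of the third paragraph --- in particular the two ``no $t$ works'' cases, which require the clean norm criterion for $\Q_2(\sqrt{-1})$ and a careful valuation bookkeeping in $t$; everything else (the Hasse principle of Theorem~\ref{thm: CTSSD87 HP}, passing between $Y_m$ and $X_m$ via rationality, and the odd and infinite places) is routine.
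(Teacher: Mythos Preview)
The paper does not give its own proof of this corollary: it is quoted verbatim from \cite[Cor.~8.12]{CTSSD87b} and used as input, so there is nothing in the present paper to compare your argument against. That said, your write-up is essentially a correct reconstruction of the local analysis underlying the cited result, and the overall architecture---reduce via Theorem~\ref{thm: CTSSD87 HP} to local solvability, dispose of $\infty$ and odd primes trivially, and carry out a case split at $2$ using the norm criterion for $\Q_2(\sqrt{-1})/\Q_2$---is exactly the intended one.

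One small imprecision worth tightening: in the insolubility half of the $2$-adic analysis you write that for $j\ge 1$ and $j\le -1$ ``the unit part of $m-t^4$ is (up to sign) a unit $\equiv 1 \pmod{16}$''. That description is accurate for $j\le -1$ (where the dominant term is $-t^4$ and unit fourth powers are $\equiv 1 \pmod{16}$), but for $j\ge 1$ the dominant term is $m$ itself, whose unit part is $\equiv 7$ or $15 \pmod{16}$ in case~(i) and $\equiv 3 \pmod 4$ in case~(ii)---not ``$\pm 1 \pmod{16}$'' in general. Your conclusion that the unit part is $\equiv 3 \pmod 4$ is nonetheless correct in every subcase, so this is only a matter of phrasing. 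Separating the $j\ge 1$ and $j\le -1$ arguments would make the write-up cleaner. Everything else (irreducibility of $t^4-m$ for nonsquare $m$, the passage between $X_m$ and its smooth proper model via unirationality, the explicit choices of $t$ in the soluble cases, and the handling of square $m$) is fine.
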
 

That is, the smooth proper compactifications of the varieties $X_m$ satisfy the Hasse principle, hence $X_m(\Q) \ne \emptyset$ whenever $X_m$ is everywhere locally soluble. However, there exist $m$ for which no such representation into the sum of two squares and a fourth power exists over the integers, despite the existence of $\Z_p$ points for each prime $p$. The first few values of such $m$ are $22,43,67,70,78,93,177$. As remarked in \cite{CTSSD87b}, it is a conjecture in additive number theory that any integer large enough should admit such a representation in the integers as soon as it is everywhere locally soluble. 

\begin{prop} \label{lem: insuff} For each $m$ listed above, there exists a prime $p|m$ such that the local invariant map $\inv_p: \calX(\Z_p) \to \frac{1}{4}\Z/\Z$ is surjective, hence the Brauer-Manin obstruction is insufficient to explain the failure of existence of integral points for these varieties.
\end{prop}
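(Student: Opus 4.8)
The plan is to apply the base-point strategy of \S\ref{sec: cocycle_lifting}. By Theorem~\ref{thm: BrX_explicitrep}(2)(ii) the quotient $\Br X_m/\Br_0 X_m \isom \Z/4\Z$ is generated by the single class represented by the $2$-cocycle
\[
	\calA \;=\; \big(-2m(x+iy),\; e_1(t-e_1),\; (1+i)e_1\big) \;\in\; H^2(G, \calO(U_K)^\times),
\]
with $e_1 = \sqrt[4]{m}$, $K = \Q(\sqrt[4]{m}, i)$ and $G = \Gal(K/\Q) \isom D_4$, as in Example~\ref{example: XmBrauer}. Since this one class generates $\Br X_m$ modulo constants, to rule out a Brauer--Manin obstruction it suffices to exhibit, for each listed $m$, a single prime $p$ at which the local evaluation map $\calX(\Z_p) \to \Br\Q_p$ attached to $\calA$ is surjective onto $(\Br\Q_p)[4] = \tfrac14\Z/\Z$: given such a $p$, any point of $\prod_{v\in S} X_m(\Q_v)\times\prod_{v\notin S}\calX(\Z_v)$ — a non-empty product, since each $X_m$ in the list is everywhere locally soluble — may be modified in its $p$-component so that $\sum_v \inv_v\calA(P_v)=0$, whence the Brauer--Manin set is non-empty. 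As $\calX_m(\Z)=\emptyset$ (and, since $p$ will be odd, the same argument applies to $\calX_m(\Z[\tfrac12])$ with $S=\{2,\infty\}$), this shows the failure of the integral Hasse principle for $\calX_m$ is not explained by a Brauer--Manin obstruction. To sidestep the fact that $\calA$ is only well-defined in $\Br X_m$ up to a constant algebra (Proposition~\ref{prop: KT08 II}), we work throughout with the base-point-modified evaluation map $P'\mapsto\calA(P')-\calA(P)$ of \S\ref{sec: cocycle_lifting}, whose surjectivity onto $\tfrac14\Z/\Z$ is what is actually checked.

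Next I would pin down the prime. At $p\nmid 2m$ the surface has good reduction and the evaluation of $\calA$ is identically $0$, so a useful $p$ must divide $2m$; since the $S=\{2,\infty\}$ statement only allows modification at primes outside $S$, we look for an odd $p\mid m$. Each $m$ in the list is squarefree, so any odd prime factor has $v_p(m)=1$: take $p=11$ for $m=22$, $p=m$ for $m=43,67$, and any odd prime factor for $m=70,78,93,177$. For such $p$ the extension $\Q_p(\sqrt[4]{m})/\Q_p$ is tamely totally ramified of degree $4$, and $\Q_p(i)/\Q_p$ is unramified, so $K_w := \Q_p(\sqrt[4]{m},i)$ has ramification index $4$ over $\Q_p$, with $\Gal(K_w/\Q_p)$ either $D_4$ (when $p\equiv 3\bmod 4$) or its cyclic subgroup $\Z/4\Z$ (when $p\equiv 1\bmod 4$). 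In either case the decomposition group contains an element of order $4$ acting through the ramification, which is exactly the arithmetic input needed for the order-$4$ class $\calA$ to take invariants in $\tfrac14\Z/\Z\setminus\tfrac12\Z/\Z$.

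With $p$ fixed, the remaining work is to compute $\inv_p\calA(P)$ for a supply of $P\in\calX(\Z_p)$ and to record points realizing every class in $\tfrac14\Z/\Z$; this is carried out with the effective cocycle-lifting algorithm of \S\ref{sec: cocycle_lifting}, implemented in \magma. Concretely: specialize $\calA$ at $P$ to a triple $(r,s,t)\in H^2(\Gal(K_w/\Q_p),K_w^\times)$ in the efficient-resolution description of \S\ref{subsec: efficres}, pull it back to the standard resolution, lift it through the tower $\kcycl\cdot K_w$ (applying whichever of the direct- or semidirect-product reductions of \S\ref{sec: cocycle_lifting} is appropriate to $\Gal(\kcycl\cdot K_w/\Q_p)$, according to whether $\kcycl\cap K_w=\Q_p$), and read off the invariant from the crossed-product formula of \S\ref{sec: cocycle_lifting}. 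Exhibiting, for each $m$, explicit $\Z_p$-points whose base-point-modified invariants generate $\tfrac14\Z/\Z$ then establishes surjectivity of $\inv_p\colon\calX(\Z_p)\to\tfrac14\Z/\Z$ and completes the proof.

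The main obstacle is precisely this last verification: one must produce honest $\Z_p$-points at which $\calA$ evaluates to $\pm\tfrac14$ — not merely to $0$ or $\tfrac12$, which a quaternion subalgebra would already achieve — and this cannot be read off a closed-form symbol; it genuinely requires running the cocycle-lifting machinery over the degree-$8$ (or degree-$4$) local field $K_w$, together with the iterated effective Hilbert~90 computations of \S\ref{sec: cocycle_lifting} to calibrate the intermediate cochains. The constant-algebra ambiguity (already absorbed into the base-point-modified map) and the checks of everywhere-local solubility and $\calX_m(\Z)=\emptyset$ are comparatively routine.
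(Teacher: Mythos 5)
Your overall architecture is the paper's: take the explicit generator of $\Br X_m/\Br_0 X_m$ from Theorem \ref{thm: BrX_explicitrep}(2)(ii), work with the base-point-modified evaluation map, and establish surjectivity of one local invariant map by running the effective cocycle-lifting algorithm of \S\ref{sec: cocycle_lifting} in \magma. The genuine gap is in how you pin down the prime. You restrict to an \emph{odd} prime $p\mid m$ on the grounds that ``the $S=\{2,\infty\}$ statement only allows modification at primes outside $S$,'' and this is a misreading twice over: the present proposition is about integral points, with the $\Z[\tfrac12]$ case handled in the subsequent corollary by reducing $\Z[\tfrac12]$-solubility to $\Z$-solubility (not by redoing the surjectivity argument at an odd place); and even for the $S$-integral Brauer--Manin set the component at $v=2$ ranges over $X(\Q_2)\supseteq X(\Z_2)$, so modifying the adelic point at $2$ is perfectly legitimate, and surjectivity of $\ev_{\calA,2}$ on $\calX(\Z_2)$ suffices. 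The paper's verified computations are in fact carried out at $p=2$ for $m=22,70,78$ (see Figure \ref{fig1invts}); nothing in the paper, and nothing in your argument, establishes surjectivity at $p=11$ for $m=22$ or at odd prime factors of $70$ and $78$. Your ramification heuristic (an order-$4$ element in the decomposition group at such $p$) shows at most that order-$4$ invariants are not excluded, not that they are attained on $\calX(\Z_p)$. Since the whole proposition rests on this computational verification, asserting it at primes where it has not been checked --- and may fail --- is a real gap; either allow $p=2$ (which does divide these $m$, so the statement is unharmed) or actually exhibit the $\Z_p$-points and invariants at your chosen odd primes.

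Two smaller omissions, both needed to make the algorithmic step well-founded: the paper first checks the hypothesis of Proposition \ref{prop: KT08 II}, namely $H^3(G,K^\times)=0$, via Corollary \ref{cor: Koch} (using that $2$ is totally ramified of degree $8$ in $K$), so that the displayed triple really is, up to a constant algebra, the restriction to $U$ of a generator of $\Br X_m$; and it analyzes the order of the cocycle class --- order $8$ in $H^2(G,\calO(U_K)^\times)$ but order $4$ in $\Br U/\Br k$, justified through the splitting criterion of Claim \ref{claim: splitting} --- which is what forces the lifting to be performed over the unramified extension of degree $d=8$ rather than $d=4$. Your proposal elides both points.
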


\begin{proof} By \cite[Rem 8.12.3]{CTSSD87b}, the varieties $X_m$ are soluble in $\Z_p$ for all $p$ odd, and are soluble in $\Z_2$ iff $m$ is not of the form given in Cor \ref{cor: CTSSD87 Xm} and $m$ is not divisible by 4. We focus on the particular case of $m = 22$. The reader should consult \hyperref[fig1invts]{Figure 1} for the remaining data. Here $K = \Q(i, \sqrt[4]{22})$. By Corollary \ref{cor: Koch}, since $\Q_2(i,\sqrt[4]{22})$ is a totally ramified extension of $\Q_2$ of degree $8 = [K:\Q]$, so $H^3(G, K^\times) = 0$. Thus the conditions of Proposition \ref{prop: KT08 II} are satisfied. By Theorem \ref{thm: BrX_explicitrep}, the restriction to $U$ of a generator of $\Br X_{22}/\Br_0 X_{22}$ is represented explicitly by
\[ \calA:= \left( -44(x + iy), \sqrt[4]{22}(t - \sqrt[4]{22}), (1 +i)\sqrt[4]{22} \right) \in H^2(G, \calO(U_K)^\times)\]
This is a lift of the class $\Btilde$ which generates $H^1(G, \Pic (X_{22})_K)$, and thus it agrees with $\Btilde$ up to a constant algebra in $\Br \Q$. 
Over a field $F$ we recall from \S\ref{subsec: dihedral} that a triple $(R,S,T) \in F^{\times 3}$ defines a 2-cocycle in $H^2(G,F^\times)$ if \[  R \in F^{\langle g \rangle}, S \in F^{\langle h \rangle},  \textup{ and } T \in F^{\langle gh \rangle}.\] The 2-coboundaries are those triples $(R,S,T) = \left( N_g(R'), N_h(S'), N_{gh}(\frac{R'}{S'})\right)$ for some elements $R',S' \in F^\times$. For ease of computation, we work with twice this class (under coordinate-wise multiplication), with representative
	\[ (R,S,T) = \left( (x + iy)^2, (t - \alpha)(t+\alpha), 1 \right).\]
Then one can check that this is a nontrivial class as 
	$N_h((x+iy)^2) = (x^2 +y^2)^2 = N_g( (t-\alpha)(t + \alpha)) = P(t)^2$, and $(x+iy)^2$ is not a norm. Doubling again yields
\[ \left( (x + iy)^4, (t-\alpha)^2 (t + \alpha)^2 , 1 \right) = (N_g(R'), N_h(S'), 1),\]
with $R' = (x + iy)$ and $S' = (t-\alpha)(t+\alpha)$. However,
\[ N_{gh}\left(\frac{R'}{S'} \right)= \frac{(x+iy) (x -iy)}{(t - \alpha)(t+\alpha)(t-i\alpha)(t+i\alpha)} = \frac{(x^2 + y^2)}{P(t)} = -1 \]
This demonstrates that $\calA$ is of order 8, as $(1,1,-1)$ is not a 2-coboundary (no $ R'', S'' \in \calO(U_K)^\times$ exist such that $N_g(R'') = 1$, $N_h(S'') = 1$ and $N_{gh}\left( \frac{R''}{S''} \right) = -1$), but $(1,1,1)$ is. Thus when computing local invariants after specialization, we must carry out cocycle lifting over the unramified cyclotomic extension of $\Q_v$ of degree $d =8$.

We note, however, that the class $\calA$ does have the correct order in $\Br U/\Br k$; that is, we are in the setting of Claim \ref{claim: splitting}, in which there exists a splitting. To see this, we must check that $ - 1 N_g(\rho) = N_h(\xi)$ has solutions,
	where $\rho \in K^{\langle h \rangle}$ and $\xi \in K^{\langle g \rangle}$. We shall consider both $\rho, \xi$ as elements of $K$, and rearrange to write the constant $-1$ in terms of a single norm:
	{\small
	\begin{eqnarray*}
	-1 & = & \frac{N_h(\xi)}{N_g(\rho)} \\
	 & = & \frac{\xi h(\xi)}{\rho g(\rho) g^2(\rho) g^3(\rho)} \\
	 & = & \frac{g (\xi) g h g (\xi) }{\rho g(h\rho) g^2(\rho) g^3(h\rho)} \\
	 & = & \frac{N_{gh}( g \xi)}{\rho g(h\rho) g^2(\rho) gh(g^2\rho)} \\
	 %& = & \frac{N_{gh}(g\xi)}{N_{gh}(\rho g^2(\rho))} \\
	 & = & N_{gh}\left(\frac{\xi g(\xi)}{\rho g^2(\rho)}\right) \
	\end{eqnarray*}}
	Thus, the existence of a splitting for the variety $X_{22}$ (and more generally for $X_m$ with $m$ not a square or fourth power in $\Q$) is equivalent to the condition that $-1$ is a norm for the extension $K/K^{\langle gh \rangle}$. That is, whether the restriction of the quaternion algebra $(-1,-1) \in \Br K$ to the field $ \Q(\sqrt[4]{22}(1 + i))$ is trivial. But this extension has no real places, so the archimedean invariants are all 0.  Additionally, 2 is totally ramified in this extension, and as $\Res$ acts as multiplication by the degree for local fields, the local invariants are all trivial.

When $v = 2$, we are thus in the setting of \S4.1. After modifying by a choice of base point $P = (1,10,\sqrt[4]{-79}) \in \Z_2^3$ and applying the algorithm of $\S4$ we find the surjectivity of the map $\inv_2: \calX(\Z_2) \to \frac{1}{4}\Z/\Z$, and hence a non-empty Brauer-Manin set.

\begin{figure} \small \label{fig1invts}
\caption{Local invariant computations for $\calX_m$}
\begin{align*}
\begin{tabular}{|c|c|c|c|}
\hline
\hspace{.2 em }$m$ \hspace{.2 em} & \hspace{.2 em} $p$  \hspace{.2 em} & ${(x,y)}$ & invariant \\
\hline \hline
{} & {} & (1,10) & 0   \\
22 & 2  & (2,15) & 1/4 \\
{} & {} & (2,1)  & 1/2 \\
{} & {} & (1,6)  & 3/4 \\
\hline
{} & {} & (1,17) & 0   \\
43 & 43 & (1,35) & 1/4 \\
{} & {} & (1,8)  & 1/2 \\
{} & {} & (2,2)  & 3/4 \\
\hline
{} & {} & (1,2)  & 0   \\
67 & 67 & (11,2) & 1/4 \\
{} & {} & (2,4)  & 1/2 \\
{} & {} & (2,3)  & 3/4 \\
\hline
\end{tabular} & \hspace{3 em}
\begin{tabular}{|c|c|c|c|}
\hline
\hspace{.2 em }$m$ \hspace{.2 em} & \hspace{.2 em} $p$  \hspace{.2 em} & ${(x,y)}$ & invariant \\
\hline \hline
{} & {} & (1,2)  & 0   \\
70 & 2  & (1,6)  & 1/4 \\
{} & {} & (1,10) & 1/2 \\
{} & {} & (1,14) & 3/4 \\
\hline
{} & {} & (2,11) & 0   \\
78 & 2  & (2,13) & 1/4 \\
{} & {} & (2,3)  & 1/2 \\
{} & {} & (2,5)  & 3/4 \\
\hline
{} & {} & (1,17) & 0   \\
93 & 31 & (1,5)  & 1/4 \\
{} & {} & (1,4)  & 1/2 \\
{} & {} & (1,8)  & 3/4 \\
\hline
\end{tabular}
\end{align*}
\end{figure}
\end{proof}

As noted in the introduction, the compactness of $X_m(\R)$ forces the possible set of integral points to be finite. However, one may remove this compactness constraint by instead asking for the existence of $\Z[\frac{1}{2}]$ points. Again we find,

\begin{cor} 
	Let $S = \{2, \infty\}$. The Brauer-Manin obstruction is insufficient to explain failures of the S-integral Hasse principle for the varieties $X_m$.
\end{cor}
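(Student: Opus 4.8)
The plan is to reduce the statement to Proposition~\ref{lem: insuff}: the Brauer class $\calA$ and all of the local invariant data we need are exactly those produced there, so the only genuinely new points are (a) that the $S$-integral Hasse principle really does fail for these $X_m$, i.e.\ that $\calX_m(\Z[\tfrac{1}{2}])=\emptyset$ while $\calX_m$ stays $S$-locally soluble, and (b) the bookkeeping observation that $2\in S$. Local solubility is immediate: for each $m$ in the list one has $X_m(\R)\neq\emptyset$ (as $m>0$), $\calX_m(\Z_p)\neq\emptyset$ for every odd $p$, and $\calX_m(\Z_2)\neq\emptyset$ by \cite[Rem.~8.12.3]{CTSSD87b} (these $m$ are not divisible by $4$ and avoid the exceptional congruences of Corollary~\ref{cor: CTSSD87 Xm}), hence in particular $X_m(\Q_2)\neq\emptyset$.

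To see that $\calX_m(\Z[\tfrac{1}{2}])=\emptyset$ I would argue $2$-adically. Suppose $(x,y,t)\in\Z[\tfrac{1}{2}]^3$ with $x^2+y^2+t^4=m$; note $v_2(m)\in\{0,1\}$ for every $m$ in the list. Since $\Z[\tfrac{1}{2}]\subseteq\Z_p$ for all odd $p$, it suffices to show $x,y,t\in\Z_2$, for then $(x,y,t)\in\calX_m(\Z)$, contradicting Corollary~\ref{cor: CTSSD87 Xm} (equivalently, the fact that these $m$ are not of the form $u^2+v^2+w^4$ with $u,v,w\in\Z$). Assume not, and set $\mu:=\min\!\big(2v_2(x),\,2v_2(y),\,4v_2(t)\big)<0$; then $\mu$ is even, so $\mu\le-2$. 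If only one of the three summands attained the valuation $\mu$ we would get $v_2(x^2+y^2+t^4)=\mu<0$, impossible; so at least two of them do. Writing each relevant coordinate as $2^{v_2(\cdot)}\cdot(\text{unit})$ and using that an odd square is $\equiv1\pmod 8$ and an odd fourth power is $\equiv1\pmod{16}$, a short case check shows that $v_2(x^2+y^2+t^4)$ equals $\mu$ when all three summand-valuations equal $\mu$ and equals $\mu+1$ when exactly two do; in either case it is $\le-1<0$, again impossible. Hence $x,y,t\in\Z_2$.

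Finally I would deduce the absence of a Brauer--Manin obstruction from Proposition~\ref{lem: insuff}. By Theorem~\ref{thm: extended brauer deg 4} (the $\Z/4\Z$ case, applicable since $L=\Q(i)\subseteq K=\Q(i,\sqrt[4]{m})$ and $\Gal(K/L)\isom\Z/4\Z$), $\Br X_m/\Br_0 X_m\isom\Z/4\Z$, generated by the class whose restriction to $U$ is the explicit $2$-cocycle $\calA$ of Theorem~\ref{thm: BrX_explicitrep}; and Figure~\ref{fig1invts}, for a suitable base point $P$ at $v=2$, exhibits points $P'\in\calX_m(\Z_2)$ realizing all four values of $P'\mapsto\calA(P')-\calA(P)$ in $(\Br\Q_2)[4]=\tfrac{1}{4}\Z/\Z$. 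The $S$-integral Brauer--Manin set sits inside $X_m(\R)\times X_m(\Q_2)\times\prod_{p\text{ odd}}\calX_m(\Z_p)$, and because $2\in S$ the factor at $2$ is all of $X_m(\Q_2)$, which contains $\calX_m(\Z_2)$. Given any point of this product, fixing every coordinate except the one at $2$ and letting the latter run through the points of Figure~\ref{fig1invts} makes the total invariant $\sum_v\inv_v\calA$ run through every element of $\tfrac{1}{4}\Z/\Z$, so it can be made to vanish; since $\calA$ generates $\Br X_m/\Br_0 X_m$ this produces a point of the $S$-integral Brauer--Manin set. The point requiring care is that passing from $\Z$- to $\Z[\tfrac{1}{2}]$-points enlarges the admissible component at $2$ from $\calX_m(\Z_2)$ to $X_m(\Q_2)$; but Proposition~\ref{lem: insuff} already establishes the needed surjectivity on the \emph{smaller} set $\calX_m(\Z_2)$, so it holds a fortiori and no new local invariant computation is needed. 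I expect this bookkeeping, together with the $2$-adic emptiness check, to be the only real content; everything else is inherited from Proposition~\ref{lem: insuff}.
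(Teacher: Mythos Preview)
Your proposal is essentially correct and follows the same two-step structure as the paper: show $\calX_m(\Z[\tfrac12])=\emptyset$, then invoke Proposition~\ref{lem: insuff} for the absence of a Brauer--Manin obstruction. Your $2$-adic valuation argument for emptiness is a more detailed variant of the paper's, which simply observes that clearing denominators transforms a $\Z[\tfrac12]$-point into a $\Z$-point of $x^2+y^2+t^4=2^{4r}m$ for some $r\ge0$, and that for $r>0$ this equation is not even soluble in $\Z_2$ (since $4\mid 2^{4r}m$); both routes reach the same conclusion.

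There is one slip in your Brauer--Manin bookkeeping: you assert that Figure~\ref{fig1invts} exhibits surjectivity of the evaluation map at $v=2$ for every $m$ on the list, but in fact for $m=43,67,93$ the prime at which Proposition~\ref{lem: insuff} produces surjectivity is odd ($p=43,67,31$ respectively). This does not damage the argument---for an odd $p\notin S$ the local factor $\calX_m(\Z_p)$ is unchanged when passing from the $\Z$- to the $\Z[\tfrac12]$-integral adele set---and the cleanest fix is the one implicit in the paper: the $S$-integral Brauer--Manin set contains the ordinary integral Brauer--Manin set (one has only relaxed the condition at $2$), and the latter is already nonempty by Proposition~\ref{lem: insuff}. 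Your ``a fortiori'' instinct is right; it just needs to be applied at whichever prime Proposition~\ref{lem: insuff} actually uses, not specifically at $2$.
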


\begin{proof} 
	 Suppose that $\left(\frac{x_0}{2^{r_0}}, \frac{y_0}{2^{r_1}}, \frac{t_0}{2^{r_2}}\right) \in X_m(\Z[\frac{1}{2}])$. After clearing denominators, the solubility is implied by the existence of solutions in the integers of
	\[	
		x^2 + y^2 + t^4 = 2^{4r_2}m 
	\]
	However, either $r_2$ is 0, and this is equivalent to the solubility in the integers of the original equation $X_m$ or $r_2 >0$ and the right hand side is divisible by 4, hence not everywhere locally soluble, a contradiction. Thus if $m$ is one of the values above, $X_m$ is not soluble over $\Z[\frac{1}{2}]$, and Proposition \ref{lem: insuff} shows that there is still no Brauer-Manin obstruction.
\end{proof}

%%%%%%%%%%%%%%%%%%%%%%%%%%%%%%%%%%%%%%%%%%%%%%%%%%%%%%%%%%%%%%%%%%%%%%%%%%%%%%%%
%%%%%%%%%%%%%%%%%%				Bibliography			%%%%%%%%%%%%%%%%%%%%%%%%
%%%%%%%%%%%%%%%%%%%%%%%%%%%%%%%%%%%%%%%%%%%%%%%%%%%%%%%%%%%%%%%%%%%%%%%%%%%%%%%%

	\begin{bibdiv}
		\begin{biblist}
		
		\bib{MAGMA}{article}{
   			AUTHOR = {Bosma, Wieb},
   			Author =  {Cannon, John},
   			Author =  {Playoust, Catherine},
    		TITLE = {The {M}agma algebra system. {I}. {T}he user language},
    		NOTE = {Computational algebra and number theory (London, 1993)},
    		JOURNAL = {J. Symbolic Comput.},
    		FJOURNAL = {Journal of Symbolic Computation},
    		VOLUME = {24},
    		YEAR = {1997},
    		NUMBER = {3-4},
    		PAGES = {235--265},
    		ISSN = {0747-7171},
    		URL = {http://dx.doi.org/10.1006/jsco.1996.0125},
			}

		\bib{Bright}{article}{
			Author = {Martin Bright},
			Title = {Obstructions to the Hasse principle in families},
			Year = {2016},
			Eprint = {arXiv:1607.01303},
			}

		\bib{BrownKS}{book}{
		    AUTHOR = {Brown, K. S.},
		    TITLE = {Cohomology of groups},
		 	BOOKTITLE = {Algebraic {$K$}-theory ({P}roc. {C}onf., {N}orthwestern
		    {U}niv., {E}vanston, {I}ll., 1976)},
		    PAGES = {249--259. Lecture Notes in Math., Vol. 551},
			PUBLISHER = {Springer, Berlin},
		    YEAR = {1976},
			}

		\bib{Corn}{misc}{
		Author = {Corn, Patrick},
		Title =  {Del {P}ezzo surfaces and the {B}rauer-{M}anin obstruction},
		Publisher =  {ProQuest LLC, Ann Arbor, MI},
		Year = {2005},
		Note = {Thesis (Ph.D.)– University of California, Berkeley.}
		}

		\bib{CTH13}{article}{
			Author = {Colliot-Th\'el\`ene, Jean-Louis},
			Author = {Harari, David},
			Title = {Approximation forte en famille},
			Journal = {J. Reine Angew. Math.},
			Year = {2013},
			}

		\bib{CTSSD87a}{article}{
			AUTHOR = {Colliot-Th\'el\`ene, Jean-Louis}, Author = {Sansuc, Jean-Jacques},
			Author = {Swinnerton-Dyer, Sir Peter},
			Title = {Intersections of two quadrics and {C}h\^atelet surfaces. {I}},
			Journal = {J. Reine Angew. Math.},
			Year = {1987},
			Pages = {37--107},
			NUMBER = {373},
			}

		\bib{CTSSD87b}{article}{
			AUTHOR = {Colliot-Th\'el\`ene, Jean-Louis}, Author = {Sansuc, Jean-Jacques},
			Author = {Swinnerton-Dyer, Sir Peter},
			Title = {Intersections of two quadrics and {C}h\^atelet surfaces. {II}},
			Journal = {J. Reine Angew. Math.},
			Year = {1987},
			Pages = {72--168},
			NUMBER = {374},
			}

		\bib{CTS87}{article}{
		    AUTHOR = {Colliot-Th\'el\`ene, Jean-Louis}, Author = {Sansuc, Jean-Jacques},
		    TITLE = {La descente sur les vari\'et\'es rationnelles. {II}},
		    JOURNAL = {Duke Math. J.},
		 	FJOURNAL = {Duke Mathematical Journal},
		    VOLUME = {54},
		    YEAR = {1987},
		    NUMBER = {2},
		    PAGES = {375--492},
		    ISSN = {0012-7094},
			}

		\bib{CTW12}{article}{
			AUTHOR = {Colliot-Th\'el\`ene, Jean-Louis},
			Author = {Wittenberg, Olivier},
     		TITLE = {Groupe de {B}rauer et points entiers de deux familles de
              surfaces cubiques affines},
   			JOURNAL = {Amer. J. Math.},
  			FJOURNAL = {American Journal of Mathematics},
    		VOLUME = {134},
      		YEAR = {2012},
    		NUMBER = {5},
     		PAGES = {1303--1327},
      		ISSN = {0002-9327},
			}

		\bib{CTXO9}{article}{
			AUTHOR = {Colliot-Th\'el\`ene, Jean-Louis},
			Author = {Xu, Fei},
     		TITLE = {Brauer-{M}anin obstruction for integral points of homogeneous
              spaces and representation by integral quadratic forms},
      		%NOTE = {With an appendix by Dasheng Wei and Xu},
   			JOURNAL = {Compos. Math.},
  			FJOURNAL = {Compositio Mathematica},
  			VOLUME = {145},
  			YEAR = {2009},
  			NUMBER = {2},
  			PAGES = {309--363},
			}

		\bib{DW16}{article}{
			Author = {Derenthal, Ulrich},
			Author = {Wei, Dasheng},
			Title = {Strong approximation and descent},
			Journal = {J. reine angew. Math.},
			url = {http://dx.doi.org/10.1515/crelle-2014-0149},
			note = {to appear},
			year = {2016},
			}

		\bib{DSW15}{article}{
			Author = {Derenthal, Ulrich},
			Author = {Smeets, Arne},
			Author = {Wei, Dasheng},
			Fjournal = {Mathematische Annalen},
			Issn = {0025-5831},
			Journal = {Math. Ann.},
			Number = {3-4},
			Pages = {1021--1042},
			Title = {Universal torsors and values of quadratic polynomials represented by norms},
			Url = {http://dx.doi.org/10.1007/s00208-014-1106-7},
			Volume = {361},
			Year = {2015},
			}

		\bib{Gro68}{incollection}{
    AUTHOR = {Grothendieck, Alexander},
     TITLE = {Le groupe de {B}rauer. {III}. {E}xemples et compl\'ements},
 BOOKTITLE = {Dix expos\'es sur la cohomologie des sch\'emas},
    SERIES = {Adv. Stud. Pure Math.},
    VOLUME = {3},
     PAGES = {88--188},
 PUBLISHER = {North-Holland, Amsterdam},
      YEAR = {1968},
}
        
        \bib{GS-csa}{book}{
            author={Gille, Philippe},
            author={Szamuely, Tam{\'a}s},
            title={Central simple algebras and Galois cohomology},
            series={Cambridge Studies in Advanced Mathematics},
            volume={101},
            publisher={Cambridge University Press, Cambridge},
            date={2006},
            pages={xii+343},
            isbn={978-0-521-86103-8},
            isbn={0-521-86103-9},
        	}

		\bib{Gun13}{article}{
			AUTHOR = {Gundlach, Fabian},
			TITLE = {Integral {B}rauer-{M}anin obstructions for sums of two squares
			and a power},
			JOURNAL = {J. Lond. Math. Soc. (2)},
			FJOURNAL = {Journal of the London Mathematical Society. Second Series},
			VOLUME = {88},
			YEAR = {2013},
			NUMBER = {2},
			PAGES = {599--618},
			}

		\bib{Handel}{article}{
    		AUTHOR = {Handel, David},
     		TITLE = {On products in the cohomology of the dihedral groups},
   			JOURNAL = {Tohoku Math. J. (2)},
  			FJOURNAL = {The Tohoku Mathematical Journal. Second Series},
    		VOLUME = {45},
      		YEAR = {1993},
    		NUMBER = {1},
     		PAGES = {13--42},
      		ISSN = {0040-8735},
		}

		\bib{Har17}{article}{
			author = {Harpaz, Yonatan},
			title = {Geometry and arithmetic of certain log K3 surfaces},
			Journal = {Ann. Inst. Fourier},
			year = {2017},
			note = {to appear}
			}

		\bib{JS16}{article}{
			Author = {Jahnel, J\"org},
			Author = {Schindler, Damaris},
			Title = {On integral points on degree four del Pezzo surfaces},
			Year = {2016},
			Note = {preprint},
			Eprint = {arXiv:1602.03118},
			}

        \bib{Koch}{book}{
            place={Berlin}, 
            title={Algebraic number theory}, 
            publisher={Springer-Verlag}, 
            author={Koch, Helmut}, year={1997}
        	}

		\bib{KTdp2}{article}{
			Author = {Kresch, Andrew},
			Author = {Tschinkel, Yuri},
			Coden = {PLMTAL},
			Fjournal = {Proceedings of the London Mathematical Society. Third Series},
			Issn = {0024-6115},
			Journal = {Proc. London Math. Soc. (3)},
			Mrnumber = {2107007 (2005h:14060)},
			Number = {3},
			Pages = {545--569},
			Title = {On the arithmetic of del {P}ezzo surfaces of degree 2},
			Volume = {89},
			Year = {2004},
			}

		\bib{KT08}{article}{
    		AUTHOR = {Kresch, Andrew},
    		Author = {Tschinkel, Yuri},
     		TITLE = {Effectivity of {B}rauer-{M}anin obstructions},
   			JOURNAL = {Adv. Math.},
  			FJOURNAL = {Advances in Mathematics},
    		VOLUME = {218},
      		YEAR = {2008},
    		NUMBER = {1},
     		PAGES = {1--27},
      		ISSN = {0001-8708},
			}

		\bib{Man71}{article}{
			AUTHOR = {Manin, Y. I.},
     		TITLE = {Le groupe de {B}rauer-{G}rothendieck en g\'eom\'etrie diophantienne},
 			BOOKTITLE = {Actes du {C}ongr\`es {I}nternational des {M}ath\'ematiciens
            ({N}ice, 1970), {T}ome 1},
     		PAGES = {401--411},
 			PUBLISHER = {Gauthier-Villars, Paris},
      		YEAR = {1971},
			}

		\bib{MilneCFT}{misc}{
			author={Milne, J.S.},
			title={Class Field Theory (v4.02)},
			year={2013},
			note={Available at www.jmilne.org/math/},
			pages={281+viii}
			}

		\bib{Preu}{article}{
 			AUTHOR = {Preu, Thomas},
 			TITLE = {Effective lifting of 2-cocycles for {G}alois cohomology},
 			JOURNAL = {Cent. Eur. J. Math.},
 			FJOURNAL = {Central European Journal of Mathematics},
 			VOLUME = {11},
 			YEAR = {2013},
 			NUMBER = {12},
 			PAGES = {2138--2149},
 			ISSN = {1895-1074},
			}

		\bib{Serre}{book}{
		    AUTHOR = {Serre, Jean-Pierre},
		    TITLE = {Local fields},
		    SERIES = {Graduate Texts in Mathematics},
		    VOLUME = {67},
		    NOTE = {Translated from the French by Marvin Jay Greenberg},
		 	PUBLISHER = {Springer-Verlag, New York-Berlin},
		    YEAR = {1979},
		    PAGES = {viii+241},
		    ISBN = {0-387-90424-7},
			}

		\bib{Skorobogatov-Torsors}{book}{
			author={Skorobogatov, Alexei N.},
			title={Torsors and rational points},
			series={Cambridge Tracts in Mathematics},
			volume={144},
			publisher={Cambridge University Press},
			place={Cambridge},
			date={2001},
			pages={viii+187},
			isbn={0-521-80237-7},
			}

		\bib{OW16}{article}{
			Author = {Olivier Wittenberg},
			Title = {Rational points and zero-cycles on rationally connected varieties over number fields},
			Year = {2016},
			note = {preprint},
			Eprint = {arXiv:1604.08543},
			}

		\bib{Weibel}{book}{
			author = {Weibel, Charles A.},
			title = {An Introduction to Homological Algebra},
			series = {Cambridge Studies in Advanced Mathematics},
			number = {38},
			publisher = {Cambridge University Press, New York},
			year = {1994}
			}

		\end{biblist}
	\end{bibdiv}

\end{document}